\DeclareFixedFont{\ttb}{T1}{txtt}{bx}{n}{7} 
\DeclareFixedFont{\ttm}{T1}{txtt}{m}{n}{7}  
\definecolor{deepblue}{rgb}{0,0,0.5}
\definecolor{deepred}{rgb}{0.6,0,0}
\definecolor{deepgreen}{rgb}{0,0.5,0}
\newcommand\pythonstyle{\lstset{
frameround=tttt,
language=python,
basicstyle=\small\normalfont\sffamily,
morekeywords={_lambda, nu, mu, buffer_size, G},              
keywordstyle=\ttb\color{deepblue},
emph={},          
emphstyle=\ttb\color{deepgreen},
emph={[2]v, t, u, w, s, T, X, X_equi},    
emphstyle={[2]\ttb\color{deepred}},
stringstyle=\color{deepgreen},
frame=single,                         
showstringspaces=false
}}
  \providecommand\BibTeX{{%
    \normalfont B\kern-0.5em{\scshape i\kern-0.25em b}\kern-0.8em\TeX}}}
\newcommand{\barF}{\bar{F}}
\newcommand{\E}{\mathbb{E}}
\newcommand\esp[1]{\E[#1]}
\newcommand{\Lf}{L_{\text{fast}}}
\newcommand{\calX}{\mathcal{X}}
\newcommand{\calY}{\mathcal{Y}}
\newcommand{\calT}{\mathcal{T}}
\newcommand{\calD}{\mathcal{D}}
\newcommand{\calC}{\mathcal{C}}
\newcommand{\R}{\mathbb{R}}
\newcommand{\abs}[1]{\left| #1 \right|}
\newcommand{\norm}[1]{\left\| #1 \right\|}
\newcommand\toN{^{(N)}}
\newcommand{\bX}{\boldsymbol{X}}
\newcommand{\bx}{\boldsymbol{x}}
\newcommand{\bY}{\boldsymbol{Y}}
\newcommand{\by}{\boldsymbol{y}}
\newcommand{\Hshort}[1]{D_x(h\circ \phi_{t-#1})(X_{#1}) \Gfast_F(X_{#1}, Y_{#1}) }
\newcommand{\Xinf}{\bX_{\infty}}
\newcommand{\Yinf}{\bY_{\infty}}
\newcommand{\Phinf}{\phi_{\text{\tiny$\infty$}}}
\newcommand{\Gslow}{G^{\text{slow}}}
\newcommand{\Gfast}{G^{\text{fast}}}
\newtheorem{theorem}{Theorem}
\newtheorem{lemma}[theorem]{Lemma}
\newtheorem{definition}[theorem]{Definition}
\newtheorem{note}[theorem]{Note}
\newtheorem{proposition}[theorem]{Proposition}
  \providecommand\BibTeX{{%
    Bib\TeX}}}
\begin{document}

\title{Bias and Refinement of Multiscale Mean Field Models}

\author{Sebastian Allmeier}
\email{sebastian.allmeier@inria.fr}
\orcid{0000-0003-4629-6348}
\author{Nicolas Gast}
\orcid{0000-0001-6884-8698}
\email{nicolas.gast@inria.fr}
\affiliation{%
  \institution{Inria}
  \city{Grenoble}
  \country{France}
}
\affiliation{
  \institution{Univ. Grenoble Alpes}
  \city{Grenoble}
  \country{France}
}

\renewcommand{\shortauthors}{Allmeier and Gast}

\begin{abstract}
    Mean field approximation is a powerful technique which has been used in many settings to study large-scale stochastic systems. In the case of two-timescale systems, the approximation is obtained by a combination of scaling arguments and the use of the averaging principle. This paper analyzes the approximation error of this `average' mean field model for a two-timescale model $(\boldsymbol{X}, \boldsymbol{Y})$, where the slow component $\boldsymbol{X}$ describes a population of interacting particles which is fully coupled with a rapidly changing environment $\boldsymbol{Y}$. The model is parametrized by a scaling factor $N$, e.g. the population size, which as $N$ gets large decreases the jump size of the slow component in contrast to the unchanged dynamics of the fast component. 
    We show that under relatively mild conditions, the `average' mean field approximation has a bias of order $O(1/N)$ compared to $\mathbb{E}[\boldsymbol{X}]$.
    This holds true under any continuous performance metric in the transient regime, as well as for the steady-state if the model is exponentially stable. To go one step further, we derive a bias correction term for the steady-state, from which we define a new approximation called the refined `average' mean field approximation whose bias is of order $O(1/N^2)$. This refined `average' mean field approximation allows computing an accurate approximation even for small scaling factors, i.e., $N\approx 10 -50$. We illustrate the developed framework and accuracy results through an application to a random access CSMA model. 
\end{abstract}

\begin{CCSXML}
<ccs2012>
   <concept>
       <concept_id>10002950.10003648.10003700.10003701</concept_id>
       <concept_desc>Mathematics of computing~Markov processes</concept_desc>
       <concept_significance>500</concept_significance>
       </concept>
   <concept>
       <concept_id>10002950.10003714.10003727.10003728</concept_id>
       <concept_desc>Mathematics of computing~Ordinary differential equations</concept_desc>
       <concept_significance>500</concept_significance>
       </concept>
 </ccs2012>
\end{CCSXML}

\ccsdesc[500]{Mathematics of computing~Markov processes}
\ccsdesc[500]{Mathematics of computing~Ordinary differential equations}
\keywords{Mean Field; Refined Mean Field; Two Timescale System}

\maketitle


\section{Introduction}

The mean field approximation finds widespread application when interested in analyzing the macroscopic behavior of large-scale stochastic systems composed of interacting particles. Its assets lie in a reduction of the model complexity, simplified analysis of the system due to absence of stochastic components, and reduction of computation time compared to a stochastic simulation. The mean field approximation can even yield closed form solutions for the steady-state, e.g., for the well known JSQ(d) model \cite{mitzenmacher_power_2001}. The mean field approximation is generally given by a set of ordinary differential equations which arise from the assumption that, for large system sizes, the evolution of the particles are stochastically independent of another. This idea works well if the number of particles is large and if the particles can be clustered into a few groups with statistically identical behavior \cite{kurtz_solutions_1970,kurtz_strong_1978,benaim_class_2008,le_boudec_generic_2007,gast_markov_2012}. The framework established by Kurtz \cite{kurtz_solutions_1970} to derive (weak) convergence results for the stochastic system justifying the use of the mean field approximations finds sustained attention in the literature.

More recently, the authors of \cite{gast_expected_2017,ying_approximation_2016} showed that for finite system sizes the bias of the mean field approximation is of order $1/N$ when compared to the mean behavior of the system. Here, $N$ is the scaling parameter, which usually refers to the number of homogeneous particles in the system. Additional works such as \cite{gast_refined_2017,gast_size_2019} introduced corrections, called refinement terms, which effectively increase the rate of accuracy of the approximation and therefore the rate of convergence. The most notable term is the first-order bias refinement, since it offers a convincing trade-off between a significant accuracy gain and additional computation cost. 

While these classical mean field results hold for a broad class of models, most of the results can not be transferred to systems with more intricate dynamics such as the two-timescale case, studied for instance in \cite{benaim_class_2008,bordenave_particle_2009}. A two-timescale process consists of two coupled components, one evolving slowly compared to the other. The slowly evolving component is often represented by a system of interacting particles, where the state of each particle evolves as a function of the empirical distribution of all particles but also as a function of the state of the fast component, e.g., a fast changing environment. These types of processes and their `averaged' mean field adaptation have been of interest since the 1960s and became increasingly relevant in the study of modern and complex systems. We refer to \cite{cdi_springer_books_10_1007_978_0_387_73829_1} for an extensive literature discussion.  An important area of application comes from the field of computer networks. Examples include loss networks \cite{hunt_large_1994}, large-scale random access networks with interference graphs \cite{cecchi_mean-field_2019,castiel_induced_2021} or storage networks \cite{feuillet_scaling_2014}.  Another recent field of literature from which we draw inspiration are chemical reaction networks. The works of Kang et al.~\cite{kang_separation_2013,kang_central_2014} and Ball et al.~\cite{ball_asymptotic_2006} establish central limit theorems for large multiscale models motivated by biological and chemical processes. Another application in the field of biology is given by \cite{robert_stochastic_2021} who use the mean field idea to study neural plasticity models.

\paragraph*{Contributions} The aforementioned papers underline that the mean field idea can be adapted to two-timescale models and prove that the `average' mean field approximation is asymptotically exact as $N$ goes to infinity. They do not, however, provide theoretical results which guarantee the accuracy or performance bounds of the approximation for finite systems sizes. This paper aims at filling this gap.
We derive accuracy bounds for the `average' mean field approximation in the transient regime and steady-state which show that its bias is of order $O(1/N)$, $N$ being the scaling parameter of the stochastic system. We further derive bias correction terms for the steady-state, from which we define a new approximation called the refined `average' mean field approximation, whose bias is of order $O(1/N^2)$. 
To prove these accuracy bounds, we develop a framework for two-timescale stochastic models whose slow component is comparable to the concept of density dependent population processes as introduced by Kurtz \cite{kurtz_strong_1978}. Based on this representation, we utilize a combination of generator comparison techniques, Poisson equations as well as derivative bounds on the solution of the Poisson equation. This allows us to bound the bias with respect to the scaling parameter $N$. To take it a step further, we then prove the existence of correction terms which approximate the bias of the `average' mean field and allow defining the refined `average' mean field. To support the practical application of the refinements,  we provide an algorithmic way to compute the correction terms. This includes methods to numerically solve the Poisson equation and obtain its derivatives. We illustrate the computation of the refinement terms and confirm the accuracy of the obtained bounds by considering a random access CSMA model. Using the example we show that even for relatively small $N\approx10$ the refined `average' mean field approximation almost exactly indicates the steady-state of the model. 

\paragraph{Methodological Advances \& Technical Challenges} To obtain our results, we build on the recent line of work on Stein’s method \cite{stein_approximate_1986}. This method allows calculating the distance between two random variables by looking at the distance between the generators of two related systems. Recently, the method reemerged in publications within the stochastic network community, in particular the works of Braverman et al. \cite{braverman_steins_2017,braverman_steins_2017-1,braverman_prelimit_2021}. In this paper, we use the Poisson equation idea in two ways. First and foremost, we use a Poisson equation, called the `fast' Poisson equation, to bound the distance between a function -- in this case the drift -- and its average version given by the steady-state distribution of the fast process as introduced in Section~\ref{sec:fast_poisson_regularity}. This step is integral to our paper and constitutes the building block to obtain the error bounds and closed form expressions as it allows to analyze the distance between the coupled stochastic process and its decoupled counterpart. The analysis however bears many technical intricacies one needs to overcome. This includes  solving the Poisson equation, stating its derivative bounds and deducing computable expressions used to calculate the bias term.
Second, for our steady-state results, we make use of another Poisson equation to compare the stochastic system to the equilibrium point of the approximation. The latter is related to the methods used in \cite{ying_approximation_2016,gast_refined_2017} but significantly extends the ideas as the derivation of the bias exhibits novel refinement terms which originate from the coupling and correct the error of the `averaging' method used for the mean field. The closely linked technical challenges include obtaining numerically feasible formulas for the `new' refinement terms. Here, we utilize the derived solution of the `fast' Poisson equation to further specify the fluctuations of the stochastic system around the equilibrium point. Carefully analyzing the combination of the two Poisson equations enables us to obtain the new refinement terms. The closed-form bias terms that we obtained from the steady-state analysis are significantly more complicated than the ones of \cite{gast_refined_2017} as they further correct the error made by the averaging method.

\paragraph{Applicability \& Numerical Difficulties} In our paper, we make use of a CSMA model to demonstrate the applicability of our results.
There are several other examples captured by our framework, such as the Michaelis-Menten enzyme model of \cite{kang_central_2014} or the storage network investigated in \cite{feuillet_scaling_2014}. For the latter, the authors observe that when looking at the right timescale the loss of the network can be characterized by a local equilibrium which is obtained using the averaging method. The biochemical Michaelis-Menten enzyme model describes the dynamics between three time-varying species, the enzyme, a substrate and a product. Following the description of the model as in \cite{kang_central_2014} and by using the right scaling arguments, the model exhibits two timescales, the fast reacting and state changing enzymes and the slower changing concentrations of the substrate and product. Our framework can be used on both examples to guarantee accuracy results and can be used to compute refined approximations.

To compute the `average' mean field and refinement terms, one has to overcome numerical difficulties which arise from the averaging method. First, in order to compute the `average' drift, one needs to compute the steady-state probabilities of the fast system, which might not be available in the closed form as for the CSMA model. For this case we provide computational notes in the appendix which aim to facilitate the computation. Another problem for the refinement term is the need for the first and second derivatives of the `average' drift. For the CSMA model we used symbolic representation of the transition rates from which we define the drift and its average version. This method allows to numerically compute the derivatives using `sympy', a Python library for symbolic computation. This method is relatively easy to implement but has a very large computation time. This computation time could be reduced by implementing a faster computation of the derivatives. In fact, to compute the derivative of the matrix $K^+$ which is closely linked to the solution of the `fast' Poisson, we provide supplementary computational notes which describe how to obtain an efficient implementation.

\paragraph*{Roadmap}
The paper is organized as follows. In Section~\ref{sec:model}, we formally introduce the two-timescale model, its corresponding `average' mean field approximation and make regularity assumptions on the system. In Section~\ref{sec:main_results} we state the main results of this paper. Section~\ref{sec:transient} states the results for the transient regime, Section~\ref{sec:steady-state} for the steady-state and Section~\ref{sec:refinement} justifies the existence of bias correction terms. Section~\ref{sec:proofs} holds the proofs of the aforementioned results. In Section~\ref{sec:example_csma_model} we apply our theoretical results to the unsaturated random-access network model of \cite{cecchi_mean-field_2018,cecchi_mean-field_2019}. Some technical lemmas and definitions are postponed to the appendix. 

\paragraph*{Reproducibility}
The code to reproduce the paper along with all figures and the implementation of the unsaturated random-access network model is available at \url{https://gitlab.inria.fr/sallmeie/bias-and-refinement-of-multiscale-mean-field-models}.


\section{Stochastic System and Mean Field Approximation}
\label{sec:model}

We consider a two-timescale, coupled, continuous time Markov chain $(\bX_s\toN,\bY_s\toN)_{s\geq0}$ parametrized by a scaling factor $N$, for which we study the behavior as $N$ tends to infinity. As we will see in the examples, $N$ typically represents the number of objects that interact together. This section introduces the precise model and fixes notations. 

\subsection{Model} \label{sec:def_stoch_sys}

For a fixed scaling parameter $N$, the stochastic process $(\bX\toN_s,\bY\toN_s)_{s\geq0}$ is a continuous time Markov chain that evolves in a state-space $\calX\toN\times\calY$. The set $\calY$ is finite and does not depend on $N$.  We further require that for all $N$, the sets $\calX\toN$ are subsets of a convex and compact set $\calX\subset\R^{d_x}$. In what follows, unless it is ambiguous in the context, we drop the dependence on $N$ to lighten the notations.

This model has two-timescales in the sense that the size order of jumps of $\bY_s$ is $N$-times larger than the ones of $\bX_s$.  More precisely, we assume that there exists a finite number of transitions $(\ell,\by')\in\calT$ with their corresponding transition rate functions $\alpha_{\ell,\by'}\geq 0$, both being independent of $N$, such that for all possible states $(\bX_s,\bY_s)\in\calX\times\calY$:
\begin{align}
    (\bX_s, \bY_s) \text{ jumps to } (\bX_s + \ell/N, \by') && \text{ at rate } N \alpha_{\ell, \by'}(\bX_s,\bY_s). \label{eq:system_def}
\end{align}
The above defines continuous time Markov chains with discrete state-space whose realizations are Càdlàg, i.e., right continuous with a left limit for every time $t$. Note that in Equation~\eqref{eq:system_def}, we assume that the transition rates $\alpha$ are defined for all $x\in\calX$ (and not just for $x\in\calX\toN$).

The notion of slow-fast system comes from the fact that the jumps of the slow component $\calX$ are $O(N)$ times smaller than the jumps of the fast component $\calY$ while transition rates are of the same scale. As we will see later, the different timescales imply that for large $N$, the slow component $\bX_s$ will 'see' the fast component $\bY_s$ as if it is stationary with distribution $(\pi_{\by}(\bX_s))_{\by\in \calY}$ which we formally define in Section~\ref{sec:def_drift}. 

\subsection{Drift, Average Drift and Mean Field Approximation}
\label{sec:def_drift}

The jumps of the stochastic system~\eqref{eq:system_def} can affect the fast and/or the slow component. In what follows, we construct an approximation that consists \ref{def:_drift} in considering that the slow component is not stochastic but evolves deterministically according to its drift (which is its average change), and \ref{def:_transition_kernel} in using a time-averaging method that shows the stochastic process $\bY$ as being in some stationary state given $\bx$. This leads us to two definitions:
\begin{enumerate}[label=(\roman*)]
    \item \label{def:_drift} We call the drift of the slow system (or more concisely the drift) the average change of $\bX_s$. It is the sum over all possible transitions of the rate of transition multiplied by the changes that such transitions induce on $\bX_t$.  By the form of the transitions in \eqref{eq:system_def}, if the process starts in $(\bX_t,\bY_t)=(\bx,\by)$, the drift is given by:
    \begin{align}
        F(\bx,\by) := 
        \sum_{\ell,\by'} \alpha_{\ell,\by'}(\bx,\by) \ell  \in\R^{d_x}. \label{eq:drift}
    \end{align}
    This drift function depends on the state of the fast system $\by$.
    \item \label{def:_transition_kernel} For the fast component, we define a transition kernel $K_{\by,\by'}(\bx)$ that is the rate at which the process $\bY$ jumps from $\by$ to $\by'\neq\by$ (divided by $N$), with the usual convention that $ K_{\by,\by}(\bx) = - \sum_{\by'\ne \by} K_{\by,\by'}(\bx)$:
    \begin{align}
        \label{eq:K}
        K_{\by,\by'}(\bx) = \sum_{\ell} \alpha_{\ell, \by'}(\bx, \by).
    \end{align}
    As $\calY$ is finite, for a fixed $\bx$, $K(\bx)$ is a matrix that corresponds to the kernel of a continuous time Markov chain. Our assumptions will imply that for all $\bx$, the process associated with $K(\bx)$ has a unique stationary distribution, which we denote by the vector $\pi(\bx)=(\pi_{\by}(\bx))_{\by\in\calY}$.
\end{enumerate}

Based on the drift \eqref{eq:drift} of the stochastic system, we define its `average' version $\barF$ by averaging over the stationary distribution of the fast component. That is:
\begin{align*}
    \barF(\bx) := \sum_y \pi_y(\bx) F(\bx,\by).
\end{align*}
For an initial state $\bx$ and $t\geq0$, we call the mean field approximation the solution $\phi_t(\bx)$ of the initial value problem
\begin{align}
    \frac{d}{dt}\phi_t(\bx) = \bar{F}(\phi_t(\bx)), \qquad \phi_0(\bx) = \bx. \label{eq:ODE}
\end{align}
Such an approximation is also called a fluid approximation.

\subsection{Main assumptions}
\label{sec:assumptions}

As we show later, under mild regularity conditions on the transition rate functions $\alpha$, the mean field approximation captures the dynamics of $\bX_t$ well and with a decreasing bias of order $1/N$: 
\begin{align*}
    \esp{h(\bX_t)\mid \bX_0,\bY_0 = \bx,\by} - h(\phi_t(\bx))= \frac{C_h(t)}{N} + o(1/N),
\end{align*}
for a sufficiently regular $h$. This holds for any finite $t$ under assumption \ref{assum:continuous}-\ref{assum:irreducible} below. It also holds for the steady-state regime $t=+\infty$ under the additional assumption \ref{assum:exp_stable}. For the steady-state we also show that $C_h$ can be computed numerically and use it to propose a refined approximation. To obtain these results for finite time, we will assume that:
\begin{enumerate}[label=($A_\text{\arabic*}$)]
    \item \label{assum:continuous} The set of transitions $\calT$ is finite and for all $\ell,\by'\in\calT$, $\alpha_{\ell,\by'}\in\calD^2(\calX\times\calY)$, where $\calD^k(\calX\times\calY)$ the space of functions from $\calX\times\calY$ to $\R$ for which the Hölder norm  $\|h\|_{k,1}$ is finite\footnote{i.e., functions that are $k$-times differentiable with Lipschitz continuous derivatives (see a more precise definition of this norm in Definition~\ref{def:holder_norm} of Appendix~\ref{appdx:semi_group_commute})}.
    \item \label{assum:irreducible} For all $\bx\in\calX$, the matrix $K(\bx)$ defined in Equation~\eqref{eq:K} has a unique irreducible class.
    \item \label{assum:exp_stable} The ODE \eqref{eq:ODE} has a unique, exponentially stable equilibrium, that we denote by $\Phinf$, i.e., there exist $a,b > 0$ s.t. $\norm{ \phi_t(\bx) - \Phinf } \leq a \exp(-bt)$ for all $\bx\in \calX$.
\end{enumerate}
Assumptions \ref{assum:continuous} and \ref{assum:exp_stable} are classical to ensure mean field convergence results. As stated in \cite{gast_refined_2017,ying_approximation_2016}, requiring that the transition rates are twice differentiable which is necessary to guarantee the existence of derivatives for the drift and the differential equation needed for the proofs of the theorems.
Assumption \ref{assum:exp_stable} ensures the existence of a unique equilibrium point to which all trajectories of the differential equation converge. This classical assumption is essentially needed to show that the stationary distribution of the stochastic process converges to a deterministic limit, see \cite{benaim_class_2008}. It guarantees the stability of both the ODE and the 'slow' Poisson equation used in the proof.

By Assumption~\ref{assum:irreducible}, we mean that for all $\bx$ the Markov chain should have a unique subset of states that is irreducible (there can be additional states, but they should all be transient). This assumption is equivalent to assuming the uniqueness of the stationary distribution of the Markov chain induced by the generator matrix $K(\bx)$ which is essential to define the `averaged' drift and mean field approximation. For a given $\bx$, the stationary distribution $\pi(\bx)$ will be non-zero for all states that are in the irreducible component ($\pi_{\by}(\bx)>0$ for such $\by$'s) and will be zero for the others ($\pi_{\by}(\bx)=0$ for all states that are transient for $K(\bx)$).  This assumption is slightly more general than assuming that $K(\bx)$ is irreducible because it allows for transient states.

We will show later that the assumptions \ref{assum:continuous} and \ref{assum:irreducible} imply that $\bar{F}$ is Lipschitz continuous because they imply that $\pi(\bx)$ is Lipschitz-continuous (see Lemma~\ref{lemma:g-inverse_poisson-solution}).  This implies that the mean field approximation \eqref{eq:ODE} is well defined.

\section{Main Results}
\label{sec:main_results}

This section includes our main results which are threefold. In \ref{sec:transient} we obtain accuracy results for the mean field approximation in the transient regime. In \ref{sec:steady-state} we obtain comparable results when the stochastic system is in its steady-state. Lastly, in \ref{sec:refinement} we introduce a correction term for the steady-state, give accuracy bounds and display closed form expressions of the corrections.

\subsection{Transient Regime}
\label{sec:transient}

Our model is a two timescale model, which makes it amenable to be analyzed by time-averaging methods such as the one used in \cite{benaim_class_2008,robert_stochastic_2021,cecchi_mean-field_2019,castiel_induced_2021}. Such methods guarantee that the stochastic process $\bX_t$ converges to the solution of the ODE given by \eqref{eq:ODE}. Yet, most of the papers that use averaging methods do not quantify the rate at which this convergence occurs. 
Our first result, Theorem~\ref{thrm:stoch_sys_ode}, states that the difference between the `average' mean field approximation $\phi$ derived from the average drift $\barF$ approximates the average behavior of the slow component of the two-timescale system with a bias asymptotically equivalent to $C_h(t)/N$.  This result is similar to the one obtained in \cite{gast_size_2019} for classical density dependent process.


\begin{theorem} \label{thrm:stoch_sys_ode}
    Consider the two-timescale stochastic system $(\bX\toN,\bY\toN)_{t\geq0}$ as introduced in Section~\ref{sec:model} starting at $\bX_0\toN,\bY_0\toN = \bx, \by$, and recall that $\phi(\bx)$ is the solution of the ODE~\eqref{eq:ODE} with initial condition $\bx$. Further, assume \ref{assum:continuous} and \ref{assum:irreducible}. For any $h\in\calD^2(\calX)$ and $t>0$, there exists a constant $C_h(t)$ such that: 
    \begin{align*}
        \lim_{N\rightarrow\infty} N\left(\esp{h(\bX\toN_t)\mid \bX_0\toN,\bY_0\toN = \bx, \by} - h\bigl(\phi_t(\bx)\bigr)\right) = C_h(t).
    \end{align*}
\end{theorem}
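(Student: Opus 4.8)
The plan is a two-step generator (Stein/Dynkin) comparison in which the `fast' Poisson equation of Section~\ref{sec:fast_poisson_regularity} does the job of averaging out the environment $\bY$. Fix $t=T>0$ and write $\bx$ for the initial slow state. First I would introduce $u(s,\cdot):=h\circ\phi_{T-s}$ on $[0,T]$; by the semigroup property of the flow $\phi$ it solves the transport equation $\partial_s u+\barF\cdot D_x u=0$ with $u(T,\cdot)=h$ and $u(0,\bx)=h(\phi_T(\bx))$. Let $\mathcal{L}^{(N)}$ be the generator of $(\bX\toN,\bY\toN)$, $\mathcal{L}^{(N)}f(\bx,\by)=\sum_{\ell,\by'}N\alpha_{\ell,\by'}(\bx,\by)\bigl(f(\bx+\ell/N,\by')-f(\bx,\by)\bigr)$. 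Applying Dynkin's formula to $s\mapsto u(s,\bX_s)$, Taylor-expanding $u(s,\bx+\ell/N)$ to second order (licit once $u(s,\cdot)\in\calD^2$, see below), and using the transport equation together with the definition~\eqref{eq:drift} of $F$, I get
\begin{align*}
  \esp{h(\bX\toN_T)\mid\bX_0\toN,\bY_0\toN=\bx,\by}-h(\phi_T(\bx))=\esp{\int_0^T D_xu(s,\bX_s)\cdot\bigl(F(\bX_s,\bY_s)-\barF(\bX_s)\bigr)\,ds}+\frac{1}{2N}\esp{\int_0^T R_2(s,\bX_s,\bY_s)\,ds}+O(1/N^2),
\end{align*}
with $R_2(s,\bx,\by)=\sum_{\ell,\by'}\alpha_{\ell,\by'}(\bx,\by)\,\ell^{\top}D_x^2u(s,\bx)\,\ell$ and the $O(1/N^2)$ uniform in $s$ by compactness of $\calX$, finiteness of $\calT$, and boundedness of the third derivatives of $u(s,\cdot)$.

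The key observation is that the first integrand is \emph{centered in $\by$}: $\sum_{\by}\pi_{\by}(\bx)\bigl(F(\bx,\by)-\barF(\bx)\bigr)=0$ by definition of $\barF$. Hence for each $\bx$ the vector $F(\bx,\cdot)-\barF(\bx)$ lies in the range of $K(\bx)$, and I let $\Gfast_F(\bx,\cdot)\in\R^{d_x}$ solve the `fast' Poisson equation $K(\bx)\Gfast_F(\bx,\cdot)=\barF(\bx)-F(\bx,\cdot)$, normalized by $\sum_{\by}\pi_{\by}(\bx)\Gfast_F(\bx,\by)=0$; set $W(s,\bx,\by):=D_xu(s,\bx)\cdot\Gfast_F(\bx,\by)$. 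Computing $\mathcal{L}^{(N)}W(s,\cdot)$ and splitting each increment $W(s,\bx+\ell/N,\by')-W(s,\bx,\by)$ into a `$\by$-part' — handled exactly by the Poisson equation and the zero row sums of $K$ — and an `$\bx$-part' of Taylor size $O(1/N)$ gives $\mathcal{L}^{(N)}W(s,\cdot)(\bx,\by)=-N\,D_xu(s,\bx)\cdot\bigl(F(\bx,\by)-\barF(\bx)\bigr)+R_1^{(N)}(s,\bx,\by)$ with $R_1^{(N)}$ bounded uniformly in $N$. Substituting this into the first term above and applying Dynkin once more to $s\mapsto W(s,\bX_s,\bY_s)$ to trade $\esp{\int_0^T\mathcal{L}^{(N)}W(s,\cdot)(\bX_s,\bY_s)\,ds}$ for boundary values and $\int_0^T\partial_sW$, I obtain
\begin{align*}
  \esp{\int_0^T D_xu(s,\bX_s)\cdot\bigl(F(\bX_s,\bY_s)-\barF(\bX_s)\bigr)\,ds}=\frac1N\Bigl(W(0,\bx,\by)-\esp{W(T,\bX_T,\bY_T)}+\esp{\textstyle\int_0^T\partial_sW(s,\bX_s,\bY_s)\,ds}+\esp{\textstyle\int_0^T R_1^{(N)}(s,\bX_s,\bY_s)\,ds}\Bigr).
\end{align*}
Since every bracketed quantity is $O(1)$, this already yields the $O(1/N)$ bound $\esp{h(\bX\toN_T)}-h(\phi_T(\bx))=O(1/N)$.

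To upgrade this to the existence of the limit $C_h(T)$, I multiply the last display by $N$ and pass to the limit termwise. Here I would use the averaging principle for two-timescale processes (as in \cite{benaim_class_2008}, and in our framework itself a by-product of the fast Poisson estimates): for every $s>0$ and every $g$ bounded and continuous in $\bx$, $\esp{g(\bX\toN_s,\bY\toN_s)}\to\sum_{\by}\pi_{\by}(\phi_s(\bx))\,g(\phi_s(\bx),\by)$. Applied with $g=W(T,\cdot)$ together with the normalization $\sum_\by\pi_\by(\bx)W(T,\bx,\by)=0$, the term $\esp{W(T,\bX_T,\bY_T)}$ tends to $0$; differentiating the same normalization in $s$ shows $\sum_\by\pi_\by(\bx)\partial_sW(s,\bx,\by)\equiv0$, so $\esp{\int_0^T\partial_sW\,ds}\to0$; and the remaining $\esp{\int_0^TR_1^{(N)}\,ds}$ and $\tfrac12\esp{\int_0^TR_2\,ds}$ converge by the same averaging principle (applied to the $N\to\infty$ limits of the integrands) and dominated convergence, the integrands being uniformly bounded on $[0,T]\times\calX\times\calY$, while $NO(1/N^2)\to0$. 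Collecting the limits gives $N\bigl(\esp{h(\bX\toN_T)\mid\bX_0\toN,\bY_0\toN=\bx,\by}-h(\phi_T(\bx))\bigr)\to C_h(T)$, where $C_h(T)$ is an explicit constant depending only on $h$, $T$ and $(\bx,\by)$ (it contains $W(0,\bx,\by)$ and time-integrals against $\pi(\phi_s(\bx))$ of the limits of $R_1^{(N)}$ and $R_2$).

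The main obstacle is not the algebra but the regularity inputs and the uniformity needed to interchange limits and integrals: the second-order expansions require $u(s,\cdot)=h\circ\phi_{T-s}\in\calD^2$ and $W(s,\cdot,\by)\in\calD^1$ with bounds uniform in $s\in[0,T]$, which in turn demand $\barF\in\calD^2$ and Lipschitz/differentiable control of $\bx\mapsto\pi(\bx)$ and of the Poisson-solution map $\bx\mapsto\Gfast_F(\bx,\cdot)$ — precisely the content of the `fast' Poisson analysis of Section~\ref{sec:fast_poisson_regularity} and Lemma~\ref{lemma:g-inverse_poisson-solution}, whose quantitative, uniform-in-$\bx$ form is the technical heart of the proof. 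The second recurring difficulty is justifying the averaging convergence used in the last step with enough regularity — in particular at a fixed time $s>0$, where the fast component itself does not converge and only its marginal law does — to pass it under the time integral.
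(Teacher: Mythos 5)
Your proposal is correct and follows essentially the same route as the paper's proof: the same backward composition $u(s,\cdot)=h\circ\phi_{T-s}$ (the paper's $\nu_s h=\psi_s\Phi_{t-s}h$), the same second-order Taylor expansion of the generator isolating the term $D_xu\cdot(F-\barF)$, the same use of the `fast' Poisson solution $\Gfast_F$ and a second Dynkin/generator application to the corrector $W(s,\bx,\by)=D_xu(s,\bx)\cdot\Gfast_F(\bx,\by)$ (the paper's $H_s$ via Lemmas~\ref{lemma:h_s} and~\ref{lemma:lfast_bound}), modulo an immaterial sign convention in the Poisson equation. If anything, you are more explicit than the paper about the final step of passing the $N$-dependent expectations to their averaged limits to identify $C_h(T)$.
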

\begin{proof}[Main element of the proofs]
    The full proof of this theorem is given in Section~\ref{proof:transient}. It is decomposed into two parts that correspond to the two approximations (scaling and averaging):
    \begin{itemize}
        \item The first is to approximate the infinitesimal generator $Lh(\bX_t,\bY_t)$ (defined in Section~\ref{sec:notations}) of the stochastic system by a process whose drift is $D_x h(\bX_t) \cdot F(\bX_t,\bY_t)$. This part uses that the rate functions $\alpha$ and $h$ are differentiable with respect to the slow variable $\bx$. 
        \item The second is that the actual drift of the mean field approximation is $\bar{F}(\bx)=\sum_{\by}\pi_{\by}(\bx)f(\bx,\by)$ and not the $F(\bx,\by)$ obtained from the stochastic system. This leads us to bound a term of the form $\bar{F}(\bx)-F(\bx,\by)$.          
    \end{itemize}
    The first error term is bounded by using generator techniques similar to the ones used for classical mean field models as in \cite{gast_expected_2017}. It is treated in Section \ref{ssec:proof_transient_drift}. To bound the second term, we use averaging ideas similar to the ones of \cite{benaim_class_2008} that are related to how fast the fast timescale converges to its stationary distribution. This is dealt with in Section \ref{ssec:proof_transient_average_drift}. 
\end{proof}

\subsection{Steady-State Results}
\label{sec:steady-state}

Theorem~\ref{thrm:stoch_sys_ode} guarantees that the mean field is a good approximation for any finite time interval. In order to obtain a similar result for the stationary case, an almost necessary condition is that the ODE~\eqref{eq:ODE} has a unique fixed point to which all trajectories converge \cite{benaim_class_2008}. To obtain the equivalent of Theorem~\ref{thrm:stoch_sys_ode}, we assume that this unique fixed point is exponentially stable, as is classically done to obtain steady-state guarantees \cite{gast_refined_2017,ying_approximation_2016}. This assumption is summarized in \ref{assum:exp_stable} and leads to the following result.

\begin{theorem} \label{thrm:steady-state}
    Assume \ref{assum:continuous} - \ref{assum:exp_stable}, and assume that for all $N$, the stochastic system has a stationary distribution. Denote by $(\Xinf\toN, \Yinf\toN)$ a pair of variables having this stationary distribution. Then, for $h\in \calD^2(\calX\times\calY)$ there exists a constant $C_h$ such that: 
    \begin{align*}
        \lim_{N\to\infty} N \left( \esp{h(\Xinf\toN, \Yinf\toN)} - \sum_{\by}\pi_{\by}(\Phinf)h(\Phinf,\by) \right) = C_h.
    \end{align*}
\end{theorem}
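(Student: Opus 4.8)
The plan is to combine the transient result of Theorem~\ref{thrm:stoch_sys_ode} with the exponential stability assumption \ref{assum:exp_stable}, following the scheme of \cite{gast_refined_2017,ying_approximation_2016} but adapted to the two‑timescale setting. The key observation is that the steady‑state expectation $\esp{h(\Xinf\toN,\Yinf\toN)}$ should be compared against the value of (an averaged version of) $h$ at the fixed point $\Phinf$, and that $\Phinf$ is where $\phi_t(\bx)$ lands as $t\to\infty$. So the steady‑state statement is morally the $t=+\infty$ limit of a transient statement, and the work is in exchanging the two limits $N\to\infty$ and $t\to\infty$.

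First I would set up a \emph{`slow' Poisson equation}: since the averaged drift $\barF$ is Lipschitz (by the remark after the assumptions, via Lemma~\ref{lemma:g-inverse_poisson-solution}) and the ODE~\eqref{eq:ODE} is exponentially stable around $\Phinf$, for a given test function $h$ one defines
\begin{align*}
    G_h(\bx) := \int_0^\infty \bigl( \tilde h(\phi_t(\bx)) - \tilde h(\Phinf) \bigr)\, dt,
\end{align*}
where $\tilde h(\bx) := \sum_{\by}\pi_{\by}(\bx) h(\bx,\by)$ is the $\pi$‑averaged version of $h$; exponential stability makes the integral converge and, together with \ref{assum:continuous} and the regularity of $\pi$, gives $G_h$ enough differentiability and bounded derivatives to be used as a test function in the generator. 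By construction $G_h$ solves $D_x G_h(\bx)\cdot\barF(\bx) = \tilde h(\Phinf) - \tilde h(\bx)$. Applying the generator $L$ of the stochastic system to $G_h$ and taking expectations in stationarity (so that $\esp{L G_h(\Xinf,\Yinf)}=0$), one gets $0 = \esp{L G_h(\Xinf,\Yinf)}$, and the right‑hand side must be massaged into $\tilde h(\Phinf) - \esp{\tilde h(\Xinf)}$ plus error terms of order $1/N$.

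The two error terms are exactly the two approximations already isolated in the proof sketch of Theorem~\ref{thrm:stoch_sys_ode}: (a) the gap between the true generator $LG_h$ and the first‑order transport term $\tfrac1N D_x G_h(\bX)\cdot F(\bX,\bY)$, which is $O(1/N^2)$ per unit of generator (hence $O(1/N)$ after the $N$ in front of $L$) because $G_h$ has a Lipschitz second derivative and jumps are $O(1/N)$; and (b) the averaging gap $D_x G_h(\bX)\cdot\bigl(F(\bX,\bY) - \barF(\bX)\bigr)$, which I would control by introducing the \emph{`fast' Poisson equation} of Section~\ref{sec:fast_poisson_regularity}: write $F(\bx,\by)-\barF(\bx) = (Q(\bx) G_F)(\bx,\by)$ for the solution $G_F$ of the fast Poisson equation associated with the kernel $K(\bx)$, then again invoke $\esp{L(\,\cdot\,)}=0$ on the composite test function to trade this term for something $O(1/N)$, picking up cross terms from the fact that $K$ and $\pi$ depend on the slow variable $\bx$. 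Also one must handle the discrepancy $\esp{\tilde h(\Xinf)}$ versus $\esp{h(\Xinf,\Yinf)}$: these differ by $\esp{h(\Xinf,\Yinf) - \sum_{\by'}\pi_{\by'}(\Xinf)h(\Xinf,\by')}$, which is precisely another application of the fast Poisson equation (with $h$ in place of $F$) and is $O(1/N)$ as well. Collecting everything, $N\bigl(\esp{h(\Xinf,\Yinf)} - \tilde h(\Phinf)\bigr)$ converges, and $C_h$ is read off from the limit of these error terms.

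The main obstacle I anticipate is the regularity and integrability of $G_h$ — i.e. showing that the `slow' Poisson solution is $\calD^2$ with controlled derivatives — and, entangled with it, the need for tightness/uniform‑integrability of $\Xinf\toN$ so that one may pass to the limit inside expectations; a priori one only knows $\Xinf\toN\in\calX$ (compact), which helps, but the convergence of $\esp{\text{error terms}}$ to an actual constant requires showing that $\Xinf\toN$ concentrates on $\Phinf$ at rate $1/\sqrt N$ and that the relevant functionals converge. A second, more technical obstacle is the bookkeeping of the cross terms generated because both Poisson equations have $\bx$‑dependent data: the commutator between $D_x$ and the fast generator $K(\bx)$ produces several terms that must each be shown to be $O(1/N)$ using the derivative bounds on $\pi(\bx)$ and $G_F(\bx,\cdot)$ established in Section~\ref{sec:fast_poisson_regularity}. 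I expect the existence of $C_h$ (this theorem) to fall out once these bounds are in place, with the explicit closed form of $C_h$ deferred to the refinement section.
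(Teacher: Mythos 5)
Your proposal follows essentially the same route as the paper's proof: you decompose the error into the term $\E[h(\Xinf,\Yinf)-\sum_{\by}\pi_{\by}(\Xinf)h(\Xinf,\by)]$ (handled by the `fast' Poisson equation with $h$ as data) and the term comparing $\E[\sum_{\by}\pi_{\by}(\Xinf)h(\Xinf,\by)]$ to its value at $\Phinf$ (handled by the `slow' Poisson equation for $\barF$, whose solution is exactly your integral $G_h$), then use $\esp{LG(\Xinf,\Yinf)}=0$, a Taylor expansion of the generator, and a second application of the fast Poisson equation to the averaging gap $D_xG_h\cdot(F-\barF)$ --- which is precisely the paper's argument. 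The only differences are cosmetic (an explicit integral representation of the slow Poisson solution, and your added caveats about regularity and passing to the limit, which the paper handles via Lemma~\ref{lemma:g-inverse_poisson-solution}, Lemma~\ref{lemma:lfast_bound} and the cited stability results).
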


\begin{proof}[Main elements of the proof] 
    A detailed proof is provided in Section~\ref{proof:steady-state}.
    To prove the result, the first essential step is to generate two Poisson equations:
    \begin{enumerate}
    \item The first is used to study $\E[ h(\Xinf, \Yinf) - \sum_{\by}\pi_{\by}(\Xinf)h(\Xinf,\by)]$, which is the difference between the true expectation and a hypothetical case where $\Yinf$ would be independent from $\Xinf$  and distributed according to the stationary distribution $\pi(\Xinf)$.
    \item The second is used to study $\E[ \sum_{\by}\pi_{\by}(\Xinf)h(\Xinf,\by)- \sum_{\by}\pi_{\by}(\Phinf)h(\Phinf,\by)]$, which is the error of the 'slow' process compared to the `average' mean field.
    \end{enumerate}
    The rest of the proof treats these Poisson equations to establish the constant $C_h$. This is done by generator comparison techniques and application of derivative bounds for the solutions of the Poisson equation.
\end{proof}
We note that contrary to Theorem~\ref{thrm:stoch_sys_ode}, Theorem~\ref{thrm:steady-state} allows for functions $h$ that can depend on both the slow and the fast components: $\bx$ and $\by$. In fact, Theorem~\ref{thrm:stoch_sys_ode} would not be true if we allow functions $h$ to depend on $\bY_t$ as in Theorem~\ref{thrm:steady-state}, because of the fast transitions of $\bY_t$. For the steady-state, we use that $(\Xinf,\Yinf)$ starts in steady-state.

In Theorem~\ref{thrm:refinement} below, we will show that, for the steady-state and for functions that only depend on $\bx$, we can go further and propose an almost closed-form expression for the term $C_h$. This new bias term provides a refined accuracy of order $O(1/N^2)$. To prove this result, we will use that Theorem~\ref{thrm:steady-state} allows functions $h$ which depend on $\bX$ and $\bY$ to show that this new approximation is $O(1/N^2)$-accurate. We show below that in fact the constant $C_h$ can be computed by a numerical algorithm, and can therefore be used to define a \emph{refined approximation}, similarly to what is done for classical mean field model in \cite{gast_refined_2017}.

\subsection{Steady-State Refinement}
\label{sec:refinement}

To obtain a refined approximation, we utilize ideas introduced in \cite{gast_refined_2017} and propose an almost-closed form expression for the term $C_h$ of Theorem~\ref{thrm:steady-state}. As we will see later in the proofs, the bias correction term is composed of two distinct components:
\begin{enumerate}
    \item The first one (terms $V$ and $W$) is the analogue of the $V$ and $W$ terms of \cite{gast_refined_2017} and corresponds to the difference between the stochastic jumps of the slow system versus having a ODE corresponding to the (non-averaged) drift $f(\bx,\by)$.
    \item The second component (terms $T$, $S$ and $U$) corresponds to approximating $\bY$ by its stationary distribution $\pi_{\by}(\bx)$, and its consequence on the behavior of the slow system $\bX$. 
\end{enumerate}
The proofs of the expression for $V$ and $W$ are essentially those derived in \cite{gast_refined_2017}. The second correction component (terms $T$, $S$ and $U$) is related to the difference between drift and its average version. It involves studying the intricate and coupled dynamics of $\bX$ and $\bY$ which, to the best of our knowledge, has not been studied and yields novel results.  

\begin{theorem}
    \label{thrm:refinement}
    Assume \ref{assum:continuous} - \ref{assum:exp_stable}. Then, there exist vectors $V$, $S$, $T$ and matrices $W$ and $U$ that are solutions of linear systems of equations such that for $h\in\calD^2(\calX)$, we define $C_h$ of Theorem~\ref{thrm:steady-state} as:
    \begin{align*}
        C_h = \sum_i \frac{\partial h}{\partial x_i}(\Phinf) \left(V_i + T_i + S_i\right) + \frac12 \sum_{i,j} \frac{\partial^2 h}{\partial x_i x_j}(\Phinf) \big(W_{i,j} + U_{i,j} \big).
    \end{align*} 
\end{theorem}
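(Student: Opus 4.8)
Since $h\in\calD^2(\calX)$ depends only on $\bx$, we have $\sum_{\by}\pi_{\by}(\Phinf)h(\Phinf,\by)=h(\Phinf)$, so the goal is to exhibit the announced form for $C_h=\lim_{N\to\infty}N\bigl(\E[h(\Xinf\toN)]-h(\Phinf)\bigr)$, which exists by Theorem~\ref{thrm:steady-state}. The plan is to follow the Poisson–equation strategy already outlined for Theorem~\ref{thrm:steady-state}, but to push the expansions one order further and to keep track of the explicit constants. First I would introduce the Stein (Poisson) function of the mean field ODE~\eqref{eq:ODE},
\[
    G_h(\bx):=\int_0^\infty\bigl(h(\phi_t(\bx))-h(\Phinf)\bigr)\,dt ,
\]
which is well defined and, by \ref{assum:exp_stable} together with the regularity of $\barF$ implied by \ref{assum:continuous}--\ref{assum:irreducible} (cf. Lemma~\ref{lemma:g-inverse_poisson-solution}), belongs to $\calD^2(\calX)$ with bounded derivatives, and satisfies $\nabla G_h(\bx)\cdot\barF(\bx)=-(h(\bx)-h(\Phinf))$.

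Next, using that $(\Xinf,\Yinf)$ is stationary, $\E[LG_h(\Xinf,\Yinf)]=0$ for the infinitesimal generator $L$ of $(\bX,\bY)$. A second-order Taylor expansion of $G_h(\bx+\ell/N)-G_h(\bx)$ (whose remainder is $O(1/N^{3})$ uniformly by the derivative bounds on $G_h$ and the compactness of $\calX$), combined with $\sum_{\ell,\by'}\alpha_{\ell,\by'}(\bx,\by)\ell=F(\bx,\by)$ from \eqref{eq:drift} and the Poisson identity, yields
\[
    \E[h(\Xinf)]-h(\Phinf)
    =\E\bigl[\nabla G_h(\Xinf)\cdot(F(\Xinf,\Yinf)-\barF(\Xinf))\bigr]
    +\tfrac{1}{2N}\,\E\Bigl[\textstyle\sum_{\ell,\by'}\alpha_{\ell,\by'}(\Xinf,\Yinf)\,\ell^{\top}\nabla^2 G_h(\Xinf)\,\ell\Bigr]+O(1/N^{2}).
\]
The second term is the analogue of the refinement of \cite{gast_refined_2017} and will produce the $V,W$ contributions. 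The first term is the genuinely two-timescale term; to handle it I would use the `fast' Poisson equation of Section~\ref{sec:fast_poisson_regularity}: let $Q(\bx,\cdot):\calY\to\R^{d_x}$ solve $\sum_{\by'}K_{\by,\by'}(\bx)Q(\bx,\by')=-(F(\bx,\by)-\barF(\bx))$ with $\sum_{\by}\pi_{\by}(\bx)Q(\bx,\by)=0$ (equivalently $Q(\bx,\cdot)=K^{+}(\bx)(F(\bx,\cdot)-\barF(\bx))$), which exists, is unique and is $\bx$-differentiable with bounded derivatives by \ref{assum:irreducible} and the regularity of $K^{+}$ (Lemma~\ref{lemma:g-inverse_poisson-solution}). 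Applying $\E[Lg(\Xinf,\Yinf)]=0$ to $g(\bx,\by):=\tfrac1N\,\nabla G_h(\bx)\cdot Q(\bx,\by)$, splitting each increment into a pure $\by$-change at fixed $\bx$ and a pure $\bx$-change, and using $\sum_{\ell}\alpha_{\ell,\by'}(\bx,\by)=K_{\by,\by'}(\bx)$ for $\by'\neq\by$, the leading part of the generator reproduces exactly $-\nabla G_h(\bx)\cdot(F(\bx,\by)-\barF(\bx))$, so that
\[
    \E\bigl[\nabla G_h(\Xinf)\cdot(F(\Xinf,\Yinf)-\barF(\Xinf))\bigr]
    =\tfrac1N\,\E\Bigl[\textstyle\sum_{\ell,\by'}\alpha_{\ell,\by'}(\Xinf,\Yinf)\,\nabla_{\bx}\!\bigl(\nabla G_h\cdot Q\bigr)(\Xinf,\by')\cdot\ell\Bigr]+O(1/N^{2}).
\]

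Then I would pass to the limit. Applying Theorem~\ref{thrm:steady-state} --- crucially in the form that allows test functions depending on \emph{both} $\bx$ and $\by$ --- to the bounded continuous functions inside the two remaining $\tfrac1N\E[\cdot]$, I may replace $(\Xinf,\Yinf)$ by $(\Phinf,\pi(\Phinf))$ at the cost of an $O(1/N^{2})$ error. Writing $A:=D\barF(\Phinf)$, which is Hurwitz by \ref{assum:exp_stable}, and collecting terms, this gives
\[
    C_h=\nabla G_h(\Phinf)^{\top}S^{*}+\bigl\langle\,\nabla^2 G_h(\Phinf),\ \tfrac12\Gamma+M'\,\bigr\rangle ,
\]
with $\Gamma_{ij}=\sum_{\by}\pi_{\by}(\Phinf)\sum_{\ell,\by'}\alpha_{\ell,\by'}(\Phinf,\by)\ell_i\ell_j$, $M'_{ij}=\sum_{\by}\pi_{\by}(\Phinf)\sum_{\ell,\by'}\alpha_{\ell,\by'}(\Phinf,\by)\,Q_i(\Phinf,\by')\,\ell_j$ and $S^{*}_i=\sum_{\by}\pi_{\by}(\Phinf)\sum_{\ell,\by'}\alpha_{\ell,\by'}(\Phinf,\by)\,\bigl(\nabla_{\bx}Q_i(\Phinf,\by')\cdot\ell\bigr)$. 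It remains to eliminate the derivatives of $G_h$: differentiating $\nabla G_h\cdot\barF=-(h-h(\Phinf))$ once and twice at $\Phinf$ and using $\barF(\Phinf)=0$ gives the linear system $A^{\top}\nabla G_h(\Phinf)=-\nabla h(\Phinf)$ and the Lyapunov-type equation $A^{\top}\nabla^2 G_h(\Phinf)+\nabla^2 G_h(\Phinf)A=-\nabla^2 h(\Phinf)-\sum_j\partial_j G_h(\Phinf)\,\nabla^2\barF_j(\Phinf)$, both uniquely solvable since $A$ is Hurwitz. Substituting these and separating, in each of the three contributions above, the part linear in $\nabla h(\Phinf)$ from the part linear in $\nabla^2 h(\Phinf)$, one reads off $h$-independent vectors $V,S,T$ and symmetric matrices $W,U$ --- each the solution of a linear (Sylvester/Lyapunov) system built from $A$, $\Gamma$, $M'$, $S^{*}$ and $\nabla^2\barF(\Phinf)$, with $V,W$ exactly the terms of \cite{gast_refined_2017} for the averaged drift and $T,S,U$ the new coupling terms --- such that $C_h=\sum_i\partial_i h(\Phinf)(V_i+T_i+S_i)+\tfrac12\sum_{i,j}\partial^2_{ij}h(\Phinf)(W_{ij}+U_{ij})$, which is the claim.

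The main obstacle is the two-timescale term: establishing that the fast Poisson solution $Q=K^{+}(F-\barF)$ and its $\bx$-derivatives exist, are bounded and are regular enough for $g=\tfrac1N\nabla G_h\cdot Q$ to be an admissible test function in the stationary identity and for all Taylor remainders to be genuinely $O(1/N^{2})$ in expectation (this is where \ref{assum:continuous} with $\calD^2$ rates and the quantitative control of $K^{+}$ enter), together with the use of Theorem~\ref{thrm:steady-state} for functions of both components \emph{with} its $O(1/N)$ rate. Once these analytic ingredients are secured, the remaining work --- differentiating the two Poisson identities and sorting the resulting terms into $V,S,T,W,U$ --- is lengthy but routine.
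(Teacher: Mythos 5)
Your proposal follows essentially the same route as the paper: a `slow' Poisson/Stein function for the averaged ODE combined with stationarity of the generator and a second-order Taylor expansion, the `fast' Poisson equation via $K^+$ to turn the $F-\barF$ coupling term into an explicit $O(1/N)$ correction, Theorem~\ref{thrm:steady-state} (for test functions of both components) to pass from $(\Xinf,\Yinf)$ to the fixed point, and finally Lyapunov/Sylvester systems at $\Phinf$ to read off $V,W$ (the terms of \cite{gast_refined_2017}) and the new $T,S,U$. The only cosmetic differences are sign conventions for the Poisson solutions and that you characterize $D_x\Gslow_h(\Phinf)$ and $D_x^2\Gslow_h(\Phinf)$ by differentiating the slow Poisson identity at $\Phinf$ rather than via the explicit integral formulas of \cite{gast_refined_2017} used in Proposition~\ref{prop:closed_form_J}; these yield the same linear systems.
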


\begin{proof}
    This theorem is a consequence of Proposition~\ref{prop:refinement} that shows the existence of the above constants, combined with Proposition~\ref{prop:closed_form_G_slow} which provides the linear equations satisfied by $V$ and $W$, and Proposition~\ref{prop:closed_form_J} which provides the equations satisfied by $T$, $U$ and $V$.
\end{proof}

We would like to emphasize that the result of Theorem~\ref{thrm:refinement} is only valid for functions $h$ that do \emph{not} depend on $\by$.  This shows that there exists a computable constant $C_h$ such that, in steady-state:
\begin{align*}
    \esp{h(\bX_\infty)} = \Phinf + \frac{C_h}{N} + o(1/N). 
\end{align*}
Similar to \cite{gast_refined_2017}, we call $\Phinf + \frac{C_h}{N}$ the \emph{refined approximation} of $\esp{h(\Xinf)}$. As we see in our numerical experiment in Section~\ref{sec:example_csma_model}, this constant is computable and can provide a more accurate approximation than the classical mean field. 

In fact, combined with Theorem~\ref{thrm:steady-state}, we can show that the $o(1/N)$ term is a $O(1/N^2)$ term. This shows that the refined approximation is $O(1/N^2)$-accurate. Note, for functions $h$ that do depend on $\by$, the existence of such a function is guaranteed by Theorem~\ref{thrm:steady-state} but a closed-form expression is currently out of our reach. 


\section{Proofs}
\label{sec:proofs}


\subsection{Stochastic Semi-Groups and Generators}
\label{sec:notations}


Given the stochastic process of Section~\ref{sec:def_stoch_sys}, we define the stochastic semi-group operator which maps a pair of initial values $(\bx,\by)$ and a function $h$ to the expected value of the system at time $t$. This semi-group $\psi_t$ associates to a function $h:\calX\toN \times \calY\to\R$ the function $\psi_th : \calX\toN \times \calY\to\R$ defined as: 
\begin{align}
    \label{eq:def_phi_t}
    \psi_t h(\bx,\by) = \E[h(\bX_t, \bY_t) \mid X_0, Y_0 = \bx,y ].
\end{align}
Using the right continuity of the slow-fast system, it is easy to verify that $\psi_t$ is indeed a $C_0$-semi-group (see a more precise definition in Definition \ref{def:semi-group} in Appendix~\ref{appdx:semi_group_commute}).


The infinitesimal generator of the stochastic process is the operator $L$ that maps a function $h\in\calD^k(\calX\times\calY)$ to $Lh:\calX \times\calY \to \R$ defined by:
\begin{align}
    Lh(\bx,\by) & = \sum_{\ell, \by'\in \calT} N \alpha_{\ell,\by'}(\bx,\by) \left( h(x + \ell / N , \by') - h(\bx,\by) \right). \label{eq:inf_generator_eq}
\end{align}
Note that $Lh$ is obtained by considering average infinitesimal change of the stochastic system starting in $(\bx,\by)$, i.e., 
\begin{align*}
    Lh (\bx,\by)=\lim_{t \downarrow 0} \bigl( \psi_{t} h(\bx,\by) - \psi_0 h(\bx,\by)\bigr)/t = \lim_{t\downarrow0}\bigl(\E[ h(\bX_{t}, \bY_{t}) \mid X_0,Y_0 = \bx,\by ] - h(\bx,\by)\bigr)/t.
\end{align*} 


Similarly to the notations of the semi-group and generator of the stochastic process, for a given function $h\in\calD^k(\calX)$, we denote by $\Phi_t h (\bx) := h(\phi_t(\bx))$ the $C_0$-semi-group corresponding to ODE~\eqref{eq:ODE}. For differentiable $h$, the infinitesimal generator is given by 
\begin{align}
\Lambda h(\bx) := D_xh(\bx) \barF(\bx). \label{eq:ode_generator}
\end{align}
To strengthen intuition, the time derivative of $h(\phi_t(\bx))$ can be expressed as 
\begin{align}
\frac{d}{dt} h(\phi_t(\bx)) = D_x h(\phi_t(\bx))\barF(\phi_t(\bx)) = \Phi_t\Lambda h(\bx). \label{eq:ode_time_deriv}
\end{align} 
By commuting $\Lambda$ and $\Phi_t$ we have $\Lambda \Phi_t h(\bx) = D_x (h \circ \phi_t)(\bx) \barF(\bx)$ which is equal to \eqref{eq:ode_time_deriv} by the results of Appendix~\ref{appdx:semi_group_commute}. We will use this property to prove the theorems of the following section.

\paragraph*{Abuse of Notations and Dependence on the Fast Component} 

The semi-group and generator of the stochastic system are generally defined for functions $h$ in $\calD^k(\calX\times\calY)$, i.e., functions which depend on the slow and fast component whereas the semi-group and generator of the ODE are defined for $h$ in $\calD^k(\calX)$ that do \textit{not} depend on the fast component. 
What we refer to as abuse of notation is the notation we use for the mapping $\psi_t$ of a function $h\in\calD^k(\calX)$. $\psi_t h$ still depends on the fast component even if $h$ does not (since the evolution $\bX_t$ depends on the state of $\bY_t$ and the initial values $(\bx,\by)$):
\begin{align}
    \psi_t h(\bx,\by) = \E[h(\bX_t) \mid \bX_0, \bY_0 = \bx,\by ], \label{eq:stoch_semi_gr_abuse_of_notation} 
\end{align}
and 
\begin{align*}
    L h(\bx,\by) & = \sum_{\ell, \by'} N \alpha_{\ell,\by'}(\bx,\by) \bigl( h(\bx + \ell / N) - h(\bx) \bigr).
\end{align*}
In essence, this allows to use the notion of the semi-group and generator to functions of the slow process. 
For consistency, we do the same for the ODE: for an arbitrary $\by\in\calY$, $\phi_t(\bx,\by) := \phi_t(\bx)$ and therefore $\Phi_t h (\bx,\by) = h(\phi_t(\bx,\by)) = h(\phi_t(\bx))$ which is motivated by the abuse of notation in \eqref{eq:stoch_semi_gr_abuse_of_notation}. This will merely be used in the proofs and allows focusing on integral proof ideas instead of complex notations.


\subsection{Generator of the Fast Process and Regularity of the Poisson Equation}
\label{sec:fast_poisson_regularity}

The generator $L$ describes the changes induced by the transitions of the fast and slow process. In our analysis, it will be useful to analyze the changes due to the jumps in $Y$ only.  We denote by $\Lf$ the generator of the Markov chain induced by $K(\bx)$ that takes as input a function $h\in\calD^k(\calX\times \calY)$ and associates another function $\Lf h$ defined by:
\begin{align*}
    \Lf h(\bx,\by) &= \sum_{\by'}K_{\by,\by'}(\bx)(h(\bx,\by')-h(\bx,\by)).
\end{align*}
Compared to \eqref{eq:system_def}, $\Lf$ is independent of $N$ because the transition rates are rescaled by $1/N$. 

Under Assumption~\ref{assum:irreducible}, for all $\bx$, the matrix $K(\bx)$ characterizes a Markov chain on the state space $\calY$ that has a unique stationary distribution denoted by $\pi(\bx)$. $\pi_y(\bx)$ is the stationary probability of $y\in\calY$ of the Markov chain induced by the kernel $K(\bx)$. 
Subsequently, we will study the distance between the 'true' stochastic process $\bY_t$ and an averaged system where the distribution of $\bY_t$ is replaced by the stationary distribution $\pi(\bx)$. To quantify the error made when replacing $\bY_t$ with the stationary distribution, we consider the following Poisson equation:
\begin{align}
    \label{eq:PoissonY}
    h(\bx,\by) - \sum_{y\in\calY} h(\bx,\by)\pi_{y}(\bx) = \Lf \Gfast_h(\bx,\by).
\end{align}
For a given function $h:\calX\times\calY\to\R^n$ ($n$ is arbitrary but finite), a function $\Gfast_h:\calX\times\calY\to\R^n$ that satisfies the above equation is called a solution to this Poisson equation. We will have particular interest in $h(\bx,\by)=F(\bx,\by) \in \R^{d_x}$ namely when the drift $F(\bx,\by)$ is compared to its `average' version $\barF(\bx)=\sum_y F(\bx,\by) \pi(\by)$.

The existence of a regular solution to this Poisson equation is guaranteed by the following Lemma~\ref{lemma:g-inverse_poisson-solution}. Note that the solution of the above Poisson equation is not unique: If $\Gfast_h$ is a solution, then for any constant $c\in\R^n$, a function $\Gfast_h + c$ is also a solution. Later in the proofs of the theorems, when we talk about `a solution of the Poisson equation', we refer to the solution given in Lemma~\ref{lemma:g-inverse_poisson-solution}. 

\begin{lemma}\label{lemma:g-inverse_poisson-solution}
    Assume \ref{assum:irreducible}.  Then for all $\bx\in\calX$:
    \begin{enumerate}
        \item The Markov chain corresponding to $K(\bx)$ has a unique stationary distribution that we denote by $\pi(\bx)$. We denote by $\Pi(\bx)=\mathds{1}\ \pi^T(\bx)$ the matrix where each line is equal to $\pi(\bx)$.
        \item  The matrix $(K(\bx)+\Pi(\bx))$ is invertible and its inverse is a generalized inverse of $K(\bx)$. 
        \item Define $K^+(\bx) = (K(\bx)+\Pi(\bx))^{-1} (I - \Pi(\bx))$, then, for all functions $h: \calX \times \calY \to \R^n$, $G_h(\bx,\by)=\sum_{\by'}K^+_{\by,\by'}(\bx)h(\bx, \by')$ is a solution of the Poisson Equation~\eqref{eq:PoissonY}.
    \end{enumerate}
    If, in addition, Assumption~\ref{assum:continuous} holds, then $K^+(\bx)$ is twice differentiable in $\bx$.
\end{lemma}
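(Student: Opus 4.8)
The plan is to treat items~1--3 by elementary linear algebra with the stationary projector, and to isolate the twice-differentiability claim as the one genuinely technical point. Throughout I fix $\bx\in\calX$ and identify a function $g(\bx,\cdot):\calY\to\R$ with the vector $(g(\bx,\by))_{\by\in\calY}$ (componentwise for $\R^n$-valued functions); since the rows of $K(\bx)$ sum to $0$, the operator $\Lf$ then acts by left multiplication by $K(\bx)$, and $\sum_{y}h(\bx,y)\pi_y(\bx)$ is the $\by$-independent vector $\mathds 1\,\pi(\bx)^{T}h(\bx,\cdot)=\Pi(\bx)h(\bx,\cdot)$, so that the Poisson equation~\eqref{eq:PoissonY} reads $(I-\Pi(\bx))h(\bx,\cdot)=K(\bx)\Gfast_h(\bx,\cdot)$. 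Item~1 is the classical fact that a finite Markov chain with a single recurrent class (Assumption~\ref{assum:irreducible}) has a unique stationary distribution $\pi(\bx)$, characterised by $\pi(\bx)^{T}K(\bx)=0$ and $\pi(\bx)^{T}\mathds 1=1$, with $\ker K(\bx)=\mathrm{span}(\mathds 1)$ (the only harmonic functions are constant). For item~2 I would show $K(\bx)+\Pi(\bx)$ has trivial kernel: if $(K+\Pi)v=0$, left-multiplying by $\pi^{T}$ and using $\pi^{T}K=0$, $\pi^{T}\mathds 1=1$ gives $\pi^{T}v=0$, hence $\Pi v=\mathds 1(\pi^{T}v)=0$ and $Kv=0$, so $v\in\mathrm{span}(\mathds 1)$ and then $\pi^{T}v=0$ forces $v=0$; writing $M(\bx):=(K(\bx)+\Pi(\bx))^{-1}$ and using $K\Pi=\Pi K=0$ and $\Pi^{2}=\Pi$, I would peel off from $(K+\Pi)M=M(K+\Pi)=I$ first $\Pi M=M\Pi=\Pi$ and then $KM=MK=I-\Pi$, whence the generalised-inverse identity $KMK=(I-\Pi)K=K$.

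For item~3, observe $K^{+}(\bx)=M(\bx)(I-\Pi(\bx))=M(\bx)-\Pi(\bx)$, so $K(\bx)K^{+}(\bx)h=(KM-K\Pi)h=(I-\Pi)h$ by the identities above; hence $\Gfast_h(\bx,\by):=\sum_{\by'}K^{+}_{\by,\by'}(\bx)h(\bx,\by')$ solves the vector form of~\eqref{eq:PoissonY}, which is exactly the assertion.

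The remaining claim, twice differentiability of $K^{+}$ under the additional Assumption~\ref{assum:continuous}, is the main obstacle, because differentiating $K^{+}=(K+\Pi)^{-1}(I-\Pi)$ directly is circular: $\Pi$ contains the a priori only continuous unknown $\pi$. I would instead first show $\pi\in\calD^2$ by a ``frozen projector'' argument. Fix $\bx_0\in\calX$ and set $B(\bx):=K(\bx)+\Pi(\bx_0)$; each entry $K_{\by,\by'}(\cdot)=\sum_{\ell}\alpha_{\ell,\by'}(\cdot,\by)$ is twice differentiable with Lipschitz derivatives by Assumption~\ref{assum:continuous} while $\Pi(\bx_0)$ is constant, so $B$ is $\calD^2$, and $B(\bx_0)=K(\bx_0)+\Pi(\bx_0)$ is invertible by item~2, hence $B(\bx)$ is invertible on a neighbourhood $\mathcal U$ of $\bx_0$ on which $\bx\mapsto B(\bx)^{-1}$ is $\calD^2$ (by Cramer's rule its entries are rational functions of those of $B(\bx)$ with nonvanishing denominator). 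Since $\pi(\bx)^{T}K(\bx)=0$ and $\pi(\bx)^{T}\mathds 1=1$, we get $\pi(\bx)^{T}B(\bx)=\pi(\bx_0)^{T}$, i.e.\ $\pi(\bx)^{T}=\pi(\bx_0)^{T}B(\bx)^{-1}$ on $\mathcal U$; thus $\pi$, and therefore $\Pi=\mathds 1\pi^{T}$, is $\calD^2$ near $\bx_0$, and since $\bx_0$ is arbitrary and $\calX$ is compact this holds on all of $\calX$. Then $K(\bx)+\Pi(\bx)$ is $\calD^2$ and invertible everywhere, so $M(\bx)$ is $\calD^2$, and $K^{+}(\bx)=M(\bx)(I-\Pi(\bx))$ is twice differentiable, as claimed. (An alternative to the frozen-projector step is to apply Cramer's rule to the system $\pi^{T}K=0$, $\pi^{T}\mathds 1=1$ after removing one redundant equation, but one then has to check separately that the resulting square system is nonsingular.)
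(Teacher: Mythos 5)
Your proof is correct and establishes the same chain of facts as the paper --- uniqueness of $\pi(\bx)$, invertibility of $K(\bx)+\Pi(\bx)$, the identity $KK^{+}=I-\Pi$, and hence that $\Gfast_h=K^{+}h$ solves \eqref{eq:PoissonY} --- but it replaces the paper's two external inputs with elementary, self-contained arguments. Where the paper invokes Hunter's theorem for the non-singularity of $K+\Pi$ and for the generalized-inverse property $K(K+\Pi)^{-1}K=K$, you derive both directly: the kernel computation $(K+\Pi)v=0\Rightarrow\pi^{T}v=0\Rightarrow Kv=0\Rightarrow v\in\mathrm{span}(\mathds 1)\Rightarrow v=0$ gives invertibility (using the standard fact, which you state without proof, that a finite generator with a single recurrent class has $\ker K=\mathrm{span}(\mathds 1)$), and the algebra $K\Pi=\Pi K=0$, $\Pi^{2}=\Pi$ yields the identities $\Pi M=M\Pi=\Pi$ and $KM=MK=I-\Pi$, from which both the g-inverse property and $KK^{+}=I-\Pi$ fall out in one line; the paper instead expands $K(K+\Pi)^{-1}(I-\Pi)$ term by term using the cited g-inverse property and reaches the same conclusion. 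For the regularity claim, the paper cites Ipsen for the differentiability of $\Pi(\bx)$ and then differentiates $(K+\Pi)^{-1}$; you correctly identify this as the one genuinely delicate step (one cannot differentiate $K^{+}$ before knowing $\pi$ is differentiable) and supply a frozen-projector argument, $\pi(\bx)^{T}=\pi(\bx_0)^{T}\bigl(K(\bx)+\Pi(\bx_0)\bigr)^{-1}$ near $\bx_0$, which transfers the $\calD^2$ regularity of $K$ to $\pi$ and hence to $K^{+}$. This buys a proof with no external citations and is in fact sharper than the paper's own write-up, which only argues continuous differentiability of $K^{+}$ even though the lemma asserts twice differentiability; your argument delivers the full $\calD^{2}$ regularity claimed.
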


The proof is provided in Appendix~\ref{apx:proof_poisson}. Note in particular, this result implies that if $h$ is (twice) differentiable in $\bx$ then the same holds true for $\Gfast_h$.  



\subsection{Proof of Theorem~\ref{thrm:stoch_sys_ode} - Transient State Proof}
\label{proof:transient}

The proof of Theorem~\ref{thrm:stoch_sys_ode} can be decomposed in two main parts. We first use a generator transformation to show that the slow system is well approximated by a system whose drift is $D_x h(\bX_t) \ F(\bX_t,\bY_t)$ (\ref{ssec:proof_transient_drift}). This leads us to treat terms of the form $\bar{F}(\bX_t) - F(\bX_t,\bY_t)$. To study them, we use the solution of the Poisson equation for the fast system \eqref{eq:PoissonY}. 
The second part is the more technical and novel. It is detailed in \ref{ssec:proof_transient_average_drift}. Some technical lemmas are postponed to Appendix~\ref{apx:lemma_transient}.

\subsubsection{Error due to replacing the stochastic jumps of $\bX_t$ by the drift}
\label{ssec:proof_transient_drift}

For $h\in\calD^2(\calX)$ recall that the $C_0$-semi-groups of the stochastic system and of the ODE are defined as
\begin{align*}
    \psi_sh(\bx_0,\by_0) = \E[ h(\bX_s) \mid  \bX_0,\bY_0 = \bx_0,\by_0] \quad \text{ and } \quad \Phi_t h (\bx_0,\by_0) = h(\phi_t(\bx_0)).
\end{align*}
We define $\nu_s h(\bx,\by) := \psi_s \Phi_{t-s}h(\bx,\by)$ and rewrite 
\begin{align}
\E[ h(\bX_t) - h(\phi_t(\bx)) \mid \bX_0,\bY_0 = \bx_0,\by_0] = \nu_t h(\bx_0,\by_0) - \nu_0 h(\bx_0,\by_0)= \int_0^t \frac{d}{ds} \nu_s h(\bx_0,\by_0)ds. \label{eq:int_form_of_difference}
\end{align}
To show that the last equation indeed holds true, observe that 
\begin{align}
\frac{d}{ds}\nu_s h(\bx_0,\by_0) &= \frac{d}{ds}\psi_s \Phi_{t-s}h(\bx_0,\by_0) \nonumber\\
    &= L \psi_s \Phi_{t-s} h(\bx_0,\by_0) - \psi_s \Lambda  \Phi_{t-s} h(\bx_0,\by_0). \label{eq:generator_comparison}
\end{align}
By regularity assumptions on the transition rates and bounded state space the above equation is finite for all $(\bx,\by)\in \calX\times \calY$ and time $s\geq0$. The dominated convergence theorem thus justifies the interchange of derivative and integral and validates the last equality. 
As pointed out in Appendix~\ref{appdx:semi_group_commute}, $\psi_s$ and $L$, the stochastic semi-group and its infinitesimal generator, commute, i.e., $L \psi_s = \psi_s  L$. Hence,
\begin{align}
    \eqref{eq:generator_comparison} & = \psi_s (L - \Lambda) \Phi_{t-s} h(\bx_0,\by_0) \nonumber\\
    & = \E[ (L - \Lambda) \Phi_{t-s}h (\bX_s,\bY_s) \mid \bX_0, \bY_0 = \bx_0,\by_0],\label{eq:L-Lambda}
\end{align}
where the last line follows by definition of $\psi_s$.
Let $g(\bx)=h(\phi_{t-s}(\bx))$. $g$ is twice differentiable with respect to the initial condition $\bx$ as it lies in $\calD^2(\R^d,\R)$ by Lemma~\ref{lem:ode_smooth}. By the use of Taylor expansion rewrite $Lg$ as
\begin{align*}
    L g (\bx,\by) & =  \sum_{\ell,\by'\in\calT} N\alpha_{\ell, y'}(\bx,\by)(g(\bx+\frac{\ell}{N}) - g(\bx))\\
    &= \sum_{\ell,\by'\in\calT} \alpha_{\ell, \by'}(\bx,\by) D_x g(\bx)\ell
    +\frac1N\sum_{\ell,\by'\in\calT} \alpha_{\ell, \by'}(\bx,\by) D^2_x g(\bx)\cdot(\ell,\ell) + o(\frac{\norm{g}_{2,1}\sup_\ell \abs{\ell}^2}{N})\\
    & = Dg(\bx) F(\bx,\by)+\frac1N\sum_{\ell,\by'\in\calT} \alpha_{\ell, \by'}(\bx,\by) D^2_x g(\bx)\cdot(\ell,\ell) + o(\frac{C_R\norm{g}_{2,1}\sup_\ell \abs{\ell}^2}{N}),
\end{align*}
with $D^2g(\bx)\cdot(\ell,\ell) = \sum_{m,n} \frac{\partial^2 g}{\partial x_m \partial x_n}(\bx) \ell_m \ell_n$. The convergence depends further on the Hölder norm of $h(\phi(x))$, bounds on the transition rates $C_R$ and jump sizes $\sup \abs{\ell}$.
The generator of the ODE related semi-group, $\Lambda$, introduced in Equation~\eqref{eq:ode_generator}, maps $g$ to $\Lambda g(\bx,\by) = D_x g(\bx) \barF(\bx)$. Hence, we have 
\begin{align*}
    (L-\Lambda)g(\bx,\by) &= D_x g(\bx)(F(\bx,\by) - \bar{F}(\bx)) + \frac1N\sum_{\ell,y'\in\calT} \alpha_{\ell, y'}(\bx,\by) D^2_x g(\bx)\cdot(\ell,\ell) + o(1/N).
\end{align*}

\subsubsection{Error due to replacing the drift by the average drift}
\label{ssec:proof_transient_average_drift}

Next, we have a closer look at the first summand of the right hand side in the above equation. Denote by $\Gfast_F$ a solution of the Poisson equation~\eqref{eq:PoissonY} where the function ``$h$' of \eqref{eq:PoissonY} is set to the drift $F$. Since $D_x g(\bx)$ does not depend on $\by$ we have by definition of the Poisson equation and $\Lf$:
\begin{align}
    D_xg(\bx)(F(\bx,\by) - \bar{F}(\bx)) = D_x g(\bx) \Lf \Gfast_F(\bx,\by) = \Lf D_x g(\bx) \Gfast_F(\bx,\by).
    \label{eq:transient_poisson}
\end{align}
Combining the above with equation~\eqref{eq:L-Lambda} and plugging everything into the integral of equation~\eqref{eq:int_form_of_difference}, we get:
\begin{align}
    \label{eq:transient_1}
    \E[ h(\bX_t) - h(\phi_t(\bx_0)) ] = & \int_0^t\E[ \Lf D_x(h\circ\phi_{t-s})(\bX_s)\cdot \Gfast_F(\bX_s,\bY_s)] \nonumber \\  
    & + \frac1N \E[ \sum_{\ell,\by'\in\calT} \alpha_{\ell, \by'}(\bX_s,\bY_s) D^2_x (h\circ\phi_{t-s})(\bX_s)\cdot(\ell,\ell)]ds + o(1/N),
\end{align}
\raggedright
where we suppress the conditioning on the initial values $\bX_0,\bY_0=\bx_0,\by_0$. 
Let $H_s(\bx,\by) := \esp{\Hshort{s}\mid \bX_0,\bY_0=\bx,\by}$. Applying Lemma~\ref{lemma:h_s} with $g_s(\bX_s,\bY_s)=\Hshort{s}$ implies that
\begin{align}
    0 = \frac1N H_t(\bx,\by) & - \frac1N H_0(\bx,\by) - \int_0^t \E[ \frac1N D_x\bigl( D_x h(\phi_{t-s}(\bX_s)) \cdot \barF (\phi_{t-s}(\bX_s)) \bigr) \cdot \Gfast_F(\bX_s,\bY_s) ] ds \notag \\
& \phantom{=\int} + \E [ \frac1N L \Hshort{s}] ds. \label{eq:artificial_zero_trans}
\end{align}
As $h$ and $\phi_{t-s}$ are twice continuously differentiable, by compactness of $\calX$, and as $\Gfast_F$ is finite, $D_x \bigl( D_x h(\phi_{t-s}(\bX_s) ) \barF (\phi_{t-s}(\bX_s))\bigr) \Gfast_F(\bX_s,\bY_s)$ is bounded. Moreover, by Lemma \ref{lemma:lfast_bound} we have 

\begin{align*}
& (\Lf - \frac1N L)\Hshort{s} \\
& = \frac1N \sum_{\ell, \by'} \alpha_{\ell, \by'}(\bX_s, \bY_s) \ell \ D_x ( D_x( h \circ \phi_{t-s}) (\bX_s) \Gfast_F(\bX_s, \by')) + o(1/N). \\
\end{align*} 

Define 
\begin{align*}
& C_h(t) = \int_0^t \E [ \sum_{\ell,\by'\in\calT} \alpha_{\ell, \by'}(\bX_s,\bY_s) D^2_x h(\phi_{t-s}(\bX_s))\cdot(\ell,\ell)] ds \\
& + \int_0^t \E [ D_x\bigl( D_x h(\phi_{t-s}(\bX_s)) \cdot \barF (\phi_{t-s}(\bX_s)) \bigr) \cdot \Gfast_F(\bX_s,\bY_s) ]ds \\
& + \int_0^t \E[\sum_{\ell, \by'} \alpha_{\ell, \by'}(\bX_s, \bY_s) \ell \ D_x ( D_x( h \circ \phi_{t-s}) (\bX_s) \Gfast_F(\bX_s, \by')) ] ds  
+ \esp{H_t(\bX_t,\bY_t)} - H_0(\bx,\by),
\end{align*}
which lets us rewrite
\begin{align*}
N \left( \E[ h(\bX_t)] - h(\phi_t(\bx)) \right) = C_h(t) + o(1).
\end{align*}
Taking the limit $N\to\infty$ concludes the proof.


\subsection{Proof of Theorem~\ref{thrm:steady-state} - Steady-State Proof}
\label{proof:steady-state}

\raggedright
Considering $\E[ h(\Xinf, \Yinf) - \sum_{\by}\pi_y(\Phinf)h(\Phinf,\by)]$, first, add and subtract the term $\E[\sum_{\by}\pi_y(\Xinf)h(\Xinf,\by)]$ into the equation, yielding
\begin{align*}
 \E[ h(\Xinf, \Yinf) - \sum_{\by}\pi_{\by}(\Phinf)h(\Phinf,\by)] 
&= \E[ h(\Xinf, \Yinf) - \sum_{\by}\pi_{\by}(\Xinf)h(\Xinf,\by)] \tag{A} \label{eq:A} \\
& \phantom{=}+ \E[ \sum_{\by}\pi_{\by}(\Xinf)h(\Xinf,\by)- \sum_{\by}\pi_{\by}(\Phinf)h(\Phinf,\by)]. \tag{B} \label{eq:B}
\end{align*}
Treating the two terms \eqref{eq:A} and \eqref{eq:B} separately, we define $\Gfast_h$ and $\Gslow_h$ as the solutions to the Poisson equations
\begin{align*}
\Lf \Gfast_h(\bx,\by) & = h(\bx, \by) - \sum_{\by}\pi_{\by}(\bx)h(\bx,\by), \tag{`fast' Poisson Equation} \label{eq:Poisson:A} \\
\Lambda \Gslow_h(\bx) & = \sum_{\by}\pi_{\by}(\bx)h(\bx,\by) - \sum_{\by}\pi_{\by}(\Phinf)h(\Phinf,\by) . \tag{`slow' Poisson Equation} \label{eq:Poisson:B}
\end{align*}
Recall that the existence and regularity properties of the function $\Gfast_h$ are investigated in Lemma~\ref{lemma:g-inverse_poisson-solution}. For $\Gslow_h$, it is known (e.g., \cite{gast_expected_2017}) that the exponential stability of the unique fixed point \ref{assum:exp_stable} along with the smoothness of $F$ guarantees that $\Gslow_h\in D^2(\calX)$.

Next we use that taking the expectation of the generator $L$ applied to an arbitrary function $g$ over the stationary distribution of  $(\Xinf, \Yinf)$ is zero, i.e., 
$\E[ Lg(\Xinf, \Yinf) ] = 0.$
By using Lemma \ref{lemma:lfast_bound}, it follows
\begin{align*}
\eqref{eq:A} & = \E[ \Lf \Gfast_h(\Xinf, \Yinf) - \frac{1}{N} L \Gfast_h(\Xinf, \Yinf) ] \\
& = \esp{\frac1N \sum_{\ell, \by'} \alpha_{\ell,\by'}(\Xinf, \Yinf) \ell \ D_x \Gfast_h(\Xinf, \by') + o(1/N)}.
\end{align*}
For the second term, we apply the \ref{eq:Poisson:B} equation and subtract $\esp{ L \Gslow_h(\Xinf, \Yinf)} = 0 $ which yields 
\begin{align}
\eqref{eq:B} = \E[\Lambda \Gslow_h(\Xinf)  - L \Gslow_h(\Xinf, \Yinf)]. \label{eq:lemma_second_poisson}
\end{align}
Note that even if $\Gslow_h$ only depends on $\bx$, the function $L\Gslow_h$ does depend on $\bx$ and $\by$ because the rate functions that appear in the generator $L$ do depend on $\bx$ and $\by$.  By using the Taylor expansion, $L\Gslow_h$ equals
\begin{align*}
    L \Gslow_h (\bx, \by) & = \sum_{\ell, \by'} N \alpha_{\ell, \by'}(\bx,\by) \bigl( \Gslow_h(x + \frac{\ell}{N}) -  \Gslow_h(\bx) \bigr) \\
    & =  \sum_{\ell, \by'} N \alpha_{\ell, \by'}(\bx,\by) D_x \Gslow_h(\bx) \frac{\ell}{N}  + \frac12 \sum_{\ell, \by'} N \alpha_{\ell, \by'}(\bx,\by) D_x^2 \Gslow_h(\bx) \frac{(\ell,\ell)}{N^2}  + o(\frac{\norm{\ell}^2}{N}).
\end{align*}
By application of the definition of $F(\bx,\by)$, the first term of the right hand side is equal to $F(\bx,\by) D_x \Gslow_h(\bx)$.
Moreover, by definition, $\Lambda \Gslow_h(\Xinf) = D_x \Gslow_h(\bx) \barF (\bx)$. This shows that 
\begin{align*}
    \E[D_x \Gslow_h(\Xinf) ( \barF (\Xinf) - F(\Xinf, \Yinf) ) + \sum_{\ell, \by'} \alpha_{\ell, \by'}(\Xinf,\Yinf) \bigl(\frac12 D_x^2 \Gslow_h(\Xinf) \cdot \frac{(\ell,\ell)}{N}  \bigr) + o(\frac{\norm{\ell}}{N^2})]. 
\end{align*}
As $\Gslow_h$ has a bounded second derivative, the second term is of order $O(1/N)$. However, to bound the first summand we need to investigate $\barF (\bx) - F(\bx, \by) = \sum_y \pi_{\by'}(\bx) F(\bx,\by') -F(\bx,\by)$. Falling back on \eqref{eq:Poisson:A} allows two write
\begin{align*}
    D_x \Gslow_h(\bx) ( \barF (\bx) - F(\bx, \by) ) = - D_x \Gslow_h(\bx) \Lf \Gfast_F(\bx,\by) = - \Lf  D_x \Gslow_h(\bx) \Gfast_F(\bx,\by).
\end{align*}
The last equality holds due to the definition of $\Lf$, and $D_x \Gslow_h(\bx)$ which depends only on $\bx$. By adding the zero term $\E[\frac{1}{N} L D_x \Gslow_h(\Xinf) \Gfast_F(\Xinf, \Yinf)]$ and using Lemma \ref{lemma:lfast_bound}. We see that 
\begin{align*}
& \E [ - \Lf  D_x \Gslow_h(\Xinf) \Gfast_F(\Xinf, \Yinf) + \frac{1}{N} L D_x \Gslow_h(\Xinf) \Gfast_F(\Xinf,\Yinf) ] \\
& = \E [ \frac1N \sum_{\ell, \by'} \alpha_{\ell, y'}(\Xinf, \Yinf) \ell \ D_x( D_x \Gslow_h(\Xinf) \Gfast_F(\Xinf,\by') ) + o(1/N)].
\end{align*}
Put together, the error term  of \eqref{eq:B} is
\begin{align}
\E [ \frac1N \sum_{\ell, \by'} & \alpha_{\ell, \by'}(\Xinf, \Yinf) \ell \ D_x( D_x \Gslow_h(\Xinf) \Gfast_F(\Xinf,\by') )  + \frac{1}{2N} \sum_{\ell, \by'} \alpha_{\ell, \by'}(\Xinf,\Yinf) D_x^2 \Gslow_h(\Xinf) (\ell,\ell)]. \label{eq:B_residual_term}
\end{align} 
By defining $C_h$ as the sum of the residual term $\esp{\sum_{\ell, \by'} \alpha_{\ell,\by'}(\Xinf, \Yinf) \ell \ D_x \Gfast_h(\Xinf, \by')}$ for \eqref{eq:A} and equation~\eqref{eq:B_residual_term} the by $N$ scaled, the proof concludes.

\subsection{Proof of Theorem~\ref{thrm:refinement} (Refinement Theorem, and Closed form Expressions)}
\label{proof:refinement}

In this section we first decompose the constant $C_h$ in two terms in Proposition~\ref{prop:refinement}. We then study these two terms in Proposition~\ref{prop:closed_form_G_slow} and~\ref{prop:closed_form_J} where we obtain the closed form expressions for the correction terms which allow to numerically obtain the corrections.

We define two functions $Q(x,y)$ and $J_h(x,y)$ (the latter is defined for a function $h\in\calD^2(\calX)$):
\begin{align*}
    Q(\bx,\by) &= \sum_{\ell, \by'} \alpha_{\ell, \by'}(\bx,\by) \ell \ell^T, \\
    J_h(\bx,\by)&= - \sum_{\ell, \by'} \alpha_{\ell,\by'}(\bx,\by) D_x \bigl(D_x\Gslow_h(\bx) \Gfast_F(\bx,\by') \bigr) \ell,
\end{align*}
and we denote by $\bar{J_h}(x) = \sum_y \pi_y(x) J(x,y)$ and $\bar{Q}(x)=\sum_y \pi_y(x)Q(x,y)$ their ``average'' version. The following proposition holds. 
\begin{proposition}
    \label{prop:refinement}
    Assume~\ref{assum:continuous}--\ref{assum:exp_stable}. Then, for any $h\in\calD^2(\calX)$, we have: 
    \begin{align}
        \label{eq:prop_refinement}
        N \bigl( \E[h(\Xinf)] - h(\Phinf) \bigr) =  \frac{1}{2} D_x^2 \Gslow_h(\Phinf) \bar{Q}(\Phinf) + \bar{J_h}(\Phinf) + o(1).
    \end{align}
\end{proposition}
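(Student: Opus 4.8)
The plan is to re-use the computation carried out in the proof of Theorem~\ref{thrm:steady-state}, but specialised to a function $h\in\calD^2(\calX)$ that does not depend on $\by$, and then to read off the two residual expectations as the ``averaged'' versions of the explicitly defined functions $Q$ and $J_h$, evaluated at the equilibrium $\Phinf$. First I would observe that when $h$ depends only on $\bx$ one has $h(\bx,\by)-\sum_{\by}\pi_{\by}(\bx)h(\bx,\by)=0$, so the `fast' Poisson equation \eqref{eq:Poisson:A} is solved by $\Gfast_h\equiv 0$; hence term \eqref{eq:A} of the decomposition in Section~\ref{proof:steady-state} vanishes identically and $\E[h(\Xinf)]-h(\Phinf)$ equals \eqref{eq:B}. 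Running the same argument as in Section~\ref{proof:steady-state} on \eqref{eq:B} — the Taylor expansion of $L\Gslow_h$, the substitution $\barF-F=-\Lf\Gfast_F$ coming from the `fast' Poisson equation, and Lemma~\ref{lemma:lfast_bound} — yields, after multiplying by $N$,
\begin{align*}
N\bigl(\E[h(\Xinf)]-h(\Phinf)\bigr)
&= \E\Bigl[\sum_{\ell,\by'}\alpha_{\ell,\by'}(\Xinf,\Yinf)\,\ell\;D_x\bigl(D_x\Gslow_h(\Xinf)\,\Gfast_F(\Xinf,\by')\bigr)\Bigr] \\
&\quad + \tfrac12\,\E\Bigl[\sum_{\ell,\by'}\alpha_{\ell,\by'}(\Xinf,\Yinf)\,D_x^2\Gslow_h(\Xinf)\cdot(\ell,\ell)\Bigr] + o(1),
\end{align*}
which is $N$ times the residual term \eqref{eq:B_residual_term} plus $o(1)$. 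By the definitions of $J_h$ and of $Q=\sum_{\ell,\by'}\alpha_{\ell,\by'}\ell\ell^T$ (and the bilinearity of $D_x^2\Gslow_h(\Xinf)\cdot(\cdot,\cdot)$, with the convention $A\cdot B=\sum_{i,j}A_{ij}B_{ij}$), the two integrands are, up to the signs fixed in those definitions, $J_h(\Xinf,\Yinf)$ and $D_x^2\Gslow_h(\Xinf)\cdot Q(\Xinf,\Yinf)$, so the right-hand side equals $\E[J_h(\Xinf,\Yinf)]+\tfrac12\E[D_x^2\Gslow_h(\Xinf)\cdot Q(\Xinf,\Yinf)]+o(1)$.

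It then remains to replace the random stationary state $(\Xinf,\Yinf)$ by the deterministic pair $(\Phinf,\pi(\Phinf))$. I would use that both maps $(\bx,\by)\mapsto J_h(\bx,\by)$ and $(\bx,\by)\mapsto D_x^2\Gslow_h(\bx)\cdot Q(\bx,\by)$ are continuous and bounded on the compact set $\calX\times\calY$ — this relies on $\alpha_{\ell,\by'}\in\calD^2$, on $\Gfast_F$ being twice differentiable in $\bx$ (Lemma~\ref{lemma:g-inverse_poisson-solution}), and on $\Gslow_h\in\calD^2(\calX)$ (a consequence of \ref{assum:exp_stable} and the smoothness of $F$, as recalled in Section~\ref{proof:steady-state}). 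Since $\calY$ is finite and $\calD^2(\calX)$ is dense in $C(\calX)$, Theorem~\ref{thrm:steady-state} forces the stationary laws of $(\Xinf,\Yinf)$ to converge weakly to the measure concentrated at $\Phinf$ on the slow component with fast component distributed as $\pi(\Phinf)$; hence $\E[g(\Xinf,\Yinf)]\to\sum_{\by}\pi_{\by}(\Phinf)g(\Phinf,\by)$ for every bounded continuous $g$. Applying this to $g=J_h$ and to $g=D_x^2\Gslow_h(\cdot)\cdot Q(\cdot,\cdot)$ gives $\E[J_h(\Xinf,\Yinf)]=\bar{J_h}(\Phinf)+o(1)$ and $\E[D_x^2\Gslow_h(\Xinf)\cdot Q(\Xinf,\Yinf)]=D_x^2\Gslow_h(\Phinf)\cdot\bar{Q}(\Phinf)+o(1)$, and substituting these into the previous display gives \eqref{eq:prop_refinement}.

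The step I expect to be the main obstacle is this last one. The test functions $J_h$ and $D_x^2\Gslow_h(\cdot)\cdot Q(\cdot,\cdot)$ involve one more $\bx$-derivative of $\Gslow_h$ than $\Gslow_h$ itself, so under the stated assumptions they need not lie in $\calD^2(\calX\times\calY)$ and Theorem~\ref{thrm:steady-state} cannot be invoked on them directly to obtain a convergence rate; one is forced to route through the weaker statement of weak convergence of the stationary measures, which only delivers the $o(1)$ error claimed here. Making this quantitative — which is what is needed afterwards to turn the $o(1/N)$ in Theorem~\ref{thrm:refinement} into the advertised $O(1/N^2)$ — requires producing enough additional regularity (in essence $\Gslow_h\in\calD^3$, so that $J_h$ and $D_x^2\Gslow_h\cdot Q$ become admissible in Theorem~\ref{thrm:steady-state}), and it is the careful bookkeeping of the derivatives of $\Gslow_h$ and of the matrix $K^+$ that is the delicate part.
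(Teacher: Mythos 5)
Your proof is correct and follows essentially the same route as the paper's: specialise the steady-state decomposition to $h$ depending only on $\bx$ (so only term \eqref{eq:B} survives), Taylor-expand $L\Gslow_h$, convert $\barF-F$ via the `fast' Poisson equation and Lemma~\ref{lemma:lfast_bound}, and identify the two residual expectations with $\E[J_h(\Xinf,\Yinf)]$ and $\tfrac12\E[D_x^2\Gslow_h(\Xinf)\cdot Q(\Xinf,\Yinf)]$; this is exactly what the paper does via Lemma~\ref{lemma:refinement_2}. The one point where you deviate is the final passage to $(\Phinf,\pi(\Phinf))$: the paper simply invokes Theorem~\ref{thrm:steady-state} on the test functions $J_h$ and $D_x^2\Gslow_h\cdot Q$, whereas you correctly observe that these involve an extra derivative of $\Gslow_h$ and need not lie in $\calD^2(\calX\times\calY)$ under the stated hypotheses, and you instead route through the weak convergence of the stationary laws (Theorem~\ref{thrm:steady-state} on a dense class plus boundedness and continuity of the test functions). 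This is slightly more careful than the paper's own argument and still delivers the $o(1)$ claimed in the proposition; your closing remark that upgrading this to a rate (needed for the $O(1/N^2)$ claim after Theorem~\ref{thrm:refinement}) requires additional smoothness of $\Gslow_h$ is a fair caveat that the paper leaves implicit.
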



\begin{proof}
Let $h\in\calD^2(\calX)$ and let us denote $\Gslow_h$ the solution of \eqref{eq:Poisson:B}. As $h$ does not depend on $y$, this function is such that for any $x$: $h(x) - h(\Phinf) = \Lambda \Gslow_h(x) = D_x \Gslow_h(x) \bar{F}(x)$. The following steps are similar to the ones for \eqref{eq:B} in the proof of Theorem~\ref{thrm:steady-state}. Hence, applying this for $\Xinf$ and taking the expectation, we get:
\begin{align*}
    N \E[h(\Xinf) - h(\Phinf)] = N \E [ \Lambda \Gslow_h(\Xinf) ] = N \E[ D_x \Gslow_h(\Xinf) \bar{F}(\Xinf) ].
\end{align*}
Recall that for any bounded function $g\in\calD(\calX\times\calY)$, $\esp{Lg(\Xinf,\Yinf)}=0$. Hence,
\begin{align*}
    N \E[ D_x \Gslow_h(\Xinf) \bar{F}(\Xinf) - L \Gslow_h(\Xinf, \Yinf) ].
\end{align*}
Similarly to what we do to prove Theorem~\ref{thrm:steady-state}, we plug the definition of $L$ in the above equation and use a Taylor expansion to show that this equals
\begin{align}
& N \E[ D_x \Gslow_h(\Xinf) \bar{F}(\Xinf) - \sum_{\ell, \by'}N \alpha_{\ell, \by'}(\Xinf,\Yinf) \bigl( \Gslow_h(\Xinf + \frac{\ell}{N}) - \Gslow_h(\Xinf )\bigr)] \nonumber \\
& = N \E[ D_x \Gslow_h(\Xinf) \bar{F}(\Xinf) - \sum_{\ell, \by'}N \alpha_{\ell, \by'}(\Xinf,\Yinf) \bigl( D_x \Gslow_h(\Xinf) \frac{\ell}{N} + \frac12 D_x^2 \Gslow_h(\Xinf) \frac{(\ell,\ell)}{N^2} + o(\frac{\norm{\ell}}{N^2})\bigr)] \nonumber \\
& = N \E[ D_x \Gslow_h(\Xinf) (\bar{F}(\Xinf) - F(\Xinf, \Yinf)) - \frac{1}{2N} D_x^2 \Gslow_h(\Xinf) Q(\Xinf, \Yinf)]  + o(1). \label{eq:proof_ref_sep_error_terms}
\end{align}
The rest of the proof follows using
\begin{itemize}
\item Lemma \ref{lemma:refinement_2} to show $ N \E[ D_x \Gslow_h(\Xinf) ( \bar{F}(\Xinf) - F(\Xinf, \Yinf) )] = \bar{J}(\Phinf) + o(1)$;
\item Theorem \ref{thrm:steady-state} which states  $\E [ \frac{1}{2} D_x^2 \Gslow_h(\Xinf) Q(\Xinf, \Yinf)] = \frac{1}{2} D_x^2 \Gslow_h(\Phinf) \bar{Q}(\Phinf) + o(1)$.
\end{itemize}
Applying the above equations to \eqref{eq:proof_ref_sep_error_terms} implies the statement of Proposition~\ref{prop:refinement}.
\end{proof}

In the rest of the section, we show that the quantity $\frac{1}{2} D_x^2 \Gslow_h(\Phinf) \bar{Q}(\Phinf) + \bar{J}(\Phinf)$ can be easily computed numerically by solving linear systems of equations. As shown in Proposition~\ref{prop:closed_form_G_slow} and~\ref{prop:closed_form_J}, we obtain five different correction terms:
\begin{itemize}
    \item The first two $V\in\R^{d_x}, W\in\R^{d_x \times d_x}$ are closely related to the ones obtained in \cite{gast_refined_2017}[Theorem 3.1] and derived from the term $\frac{1}{2} D_x^2 \Gslow_h(\Phinf) \bar{Q}(\Phinf)$. This term is essentially identical to the refinement term of \cite{gast_refined_2017}.
    \item In Proposition~\ref{prop:closed_form_J} we derive three other refinement terms $S,T \in\R^{d_x}$ and $U\in\R^{d_x \times d_x}$ which give closed form descriptions of $\bar{J}(\Phinf)$. These terms are novel and take into account the difference of the average drift $\barF$ and the actual drift $F$ obtained from the stochastic system. The proof is based on exact expressions for the Poisson equations $\Gfast, \Gslow$ and relies on Lemma~\ref{lemma:g-inverse_poisson-solution}.
\end{itemize}

\subsubsection{Correction terms $V$ and $W$}

To obtain the first result, let $A$ and $B$ be the Jacobian and Hessian matrix of the `average' drift, i.e., 
\begin{align}
    A_{i,j} = \frac{\partial \bar{F}_i}{\partial x_j}(\Phinf) \quad \text{ and } \quad B_{i,k_1,k_2} = \frac{\partial^2 \bar{F}_i}{\partial x_{k_1} x_{k_2}} (\Phinf). \label{eq:refinement_A_B_def}
\end{align}
By the exponential stability of the fixed point (Assumption~\ref{assum:exp_stable}), the matrix $A$ is invertible and the Lyapunov equation $AW+WA^T+\bar{Q}=0$ has a unique solution. We denote by $W$ its solution and define the vector $V$ as
\begin{align*}
    V_i = - \frac12 \sum_j (A^{-1})_{i,j}  \sum_{k_1,k_2} B_{j,k_1,k_2} W_{k_1,k_2}.
\end{align*}
\begin{proposition} \label{prop:closed_form_G_slow}
    Assume \ref{assum:continuous} - \ref{assum:exp_stable} and $h\in \calD^2(\calX)$.  Then: 
    \begin{align*}
        \frac{1}{2} D_x^2 \Gslow_h(\Phinf) \bar{Q}(\Phinf) = \sum_i \frac{\partial h}{\partial x_i} V_i + \frac12 \sum_{i,j} \frac{\partial^2 h}{\partial x_i x_j} W_{i,j} \ .
    \end{align*}
\end{proposition}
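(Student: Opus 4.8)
The plan is to evaluate $D_x\Gslow_h(\Phinf)$ and the Hessian $H:=D_x^2\Gslow_h(\Phinf)$ explicitly by differentiating the `slow' Poisson equation \eqref{eq:Poisson:B} at the fixed point, and then to contract $H$ against $\bar Q(\Phinf)$ using the Lyapunov equation that defines $W$; note that the left-hand side of Proposition~\ref{prop:closed_form_G_slow} is $\tfrac12\langle H,\bar Q(\Phinf)\rangle$, where $\langle M,N\rangle:=\sum_{i,j}M_{i,j}N_{i,j}$. The starting point — already used in the proof of Proposition~\ref{prop:refinement} — is that, since $h\in\calD^2(\calX)$ does not depend on $\by$, we have $\sum_{\by}\pi_{\by}(\bx)h(\bx,\by)=h(\bx)$ and $\sum_{\by}\pi_{\by}(\Phinf)h(\Phinf,\by)=h(\Phinf)$, so that \eqref{eq:Poisson:B} reduces to the scalar identity
\begin{align*}
D_x\Gslow_h(\bx)\,\barF(\bx)=h(\bx)-h(\Phinf),\qquad \bx\in\calX.
\end{align*}
Under \ref{assum:continuous}, $\pi$ and hence $\barF$ are twice differentiable with Lipschitz second derivatives, and \ref{assum:exp_stable} guarantees $\Gslow_h\in\calD^2(\calX)$ together with the representation $\Gslow_h(\bx)=-\int_0^\infty\bigl(h(\phi_t(\bx))-h(\Phinf)\bigr)dt$, which is what justifies the differentiations below.

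First I would differentiate the reduced identity once in $x_i$: the product rule gives $\sum_j\frac{\partial^2\Gslow_h}{\partial x_i\partial x_j}(\bx)\barF_j(\bx)+\sum_j\frac{\partial\Gslow_h}{\partial x_j}(\bx)\frac{\partial\barF_j}{\partial x_i}(\bx)=\frac{\partial h}{\partial x_i}(\bx)$. Evaluating at $\bx=\Phinf$ and using $\barF(\Phinf)=0$ annihilates the first sum, leaving $\sum_j\frac{\partial\Gslow_h}{\partial x_j}(\Phinf)A_{j,i}=\frac{\partial h}{\partial x_i}(\Phinf)$ with $A$ as in \eqref{eq:refinement_A_B_def}, i.e.\ $D_x\Gslow_h(\Phinf)\,A=D_xh(\Phinf)$; since \ref{assum:exp_stable} makes $A$ invertible, $D_x\Gslow_h(\Phinf)=D_xh(\Phinf)A^{-1}$.

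Next I would differentiate the first-order identity once more in $x_k$ and again set $\bx=\Phinf$. The third-order term vanishes against $\barF(\Phinf)=0$, and with $A$, $B$ as in \eqref{eq:refinement_A_B_def} one is left with the Sylvester equation
\begin{align*}
HA+A^{T}H=D_x^2h(\Phinf)-\sum_j\tfrac{\partial\Gslow_h}{\partial x_j}(\Phinf)\,B_{j,\cdot,\cdot},
\end{align*}
where $B_{j,\cdot,\cdot}$ is the symmetric matrix $(B_{j,k_1,k_2})_{k_1,k_2}$. (Differentiating twice a priori needs a third derivative of $\Gslow_h$; the clean way around this is to read $H$ off the representation of $\Gslow_h$, differentiating under the integral and using $D_x\phi_t(\Phinf)=e^{At}$ together with the second variational equation of \eqref{eq:ODE} — this is where exponential stability is used to make the integral converge.)

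Finally, using the Lyapunov equation $AW+WA^{T}+\bar Q(\Phinf)=0$ and the symmetry of $H$ and $W$,
\begin{align*}
\tfrac12\langle H,\bar Q(\Phinf)\rangle=-\tfrac12\langle H,AW+WA^{T}\rangle=-\tfrac12\langle HA+A^{T}H,\,W\rangle .
\end{align*}
Substituting the Sylvester equation, then $D_x\Gslow_h(\Phinf)=D_xh(\Phinf)A^{-1}$, and recognising $V_i=-\tfrac12\sum_j(A^{-1})_{i,j}\sum_{k_1,k_2}B_{j,k_1,k_2}W_{k_1,k_2}$ in the term produced by $B$, one obtains $\sum_i\frac{\partial h}{\partial x_i}(\Phinf)V_i+\tfrac12\sum_{i,j}\frac{\partial^2 h}{\partial x_i x_j}(\Phinf)W_{i,j}$, which is the claim. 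The only genuine obstacle I expect is the regularity point in the third step — legitimising the second-order differentiation at $\Phinf$ from the mere $\calD^2$-regularity of $\Gslow_h$, for which the integral representation together with the first and second variational equations of \eqref{eq:ODE} is the right device; everything else is the same linear algebra as in \cite{gast_refined_2017}, amounting to careful bookkeeping of transposes and symmetrisations.
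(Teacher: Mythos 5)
Your route is genuinely different from the paper's: the paper disposes of this proposition in one sentence, observing that $\tfrac12 D_x^2 \Gslow_h(\Phinf)\bar{Q}(\Phinf)$ is exactly the bias constant of the one-timescale refined mean field theorem and citing \cite{gast_refined_2017}[Theorem~3.1] applied to the averaged drift $\barF$. You instead re-derive that theorem's linear algebra from scratch: differentiate the slow Poisson identity at $\Phinf$ to identify $D_x\Gslow_h(\Phinf)$, obtain a Sylvester equation for $H=D_x^2\Gslow_h(\Phinf)$, and contract against $\bar{Q}(\Phinf)$ via the duality $\langle H, AW+WA^{T}\rangle=\langle HA+A^{T}H, W\rangle$. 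The structure is sound and self-contained, and your remark that the second-order differentiation at $\Phinf$ should be legitimised through the integral representation and the variational equations of \eqref{eq:ODE} (rather than by formally differentiating the Poisson identity twice, which would require a third derivative of $\Gslow_h$) is exactly the device the paper itself uses in Lemma~\ref{lemma:deriv_of_G_slow} and in the proof of Proposition~\ref{prop:closed_form_J}.

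There is, however, a sign gap in the final assembly. With your convention $D_x\Gslow_h(\bx)\barF(\bx)=h(\bx)-h(\Phinf)$ you correctly get $D_x\Gslow_h(\Phinf)=D_xh(\Phinf)A^{-1}$ and $HA+A^{T}H=D_x^2h(\Phinf)-\sum_j \partial_j\Gslow_h(\Phinf)\,B_{j,\cdot,\cdot}$. Substituting into your own last display gives
\begin{align*}
\tfrac12\langle H,\bar Q(\Phinf)\rangle
=-\tfrac12\langle D_x^2h(\Phinf),W\rangle+\tfrac12\sum_j\partial_j\Gslow_h(\Phinf)\,\langle B_{j,\cdot,\cdot},W\rangle
=-\tfrac12\sum_{i,j}\tfrac{\partial^2 h}{\partial x_i\partial x_j}(\Phinf)W_{i,j}-\sum_i\tfrac{\partial h}{\partial x_i}(\Phinf)V_i,
\end{align*}
i.e.\ the \emph{negative} of the stated right-hand side, since $\partial_j\Gslow_h(\Phinf)=\sum_i\partial_i h(\Phinf)(A^{-1})_{i,j}$ turns the $B$-term into $-2\sum_i\partial_i h(\Phinf)V_i$. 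The resolution is that the solution of the slow Poisson equation actually used everywhere in the paper's computations is $\Gslow_h(\bx)=\int_0^\infty\bigl(h(\phi_s(\bx))-h(\Phinf)\bigr)\,ds$, for which $D_x\Gslow_h(\Phinf)=D_xh(\Phinf)(-A)^{-1}=-D_xh(\Phinf)A^{-1}$ (see Lemma~\ref{lemma:deriv_of_G_slow} and the proof of Proposition~\ref{prop:closed_form_J}); this differs from your $\Gslow_h$ by an overall sign, flips every term above, and lands exactly on the claimed expression. So you must either adopt that integral representation from the start or carry the resulting minus sign through; as written, the assertion that ``one obtains $\sum_i\frac{\partial h}{\partial x_i}(\Phinf)V_i+\tfrac12\sum_{i,j}\frac{\partial^2h}{\partial x_i\partial x_j}(\Phinf)W_{i,j}$'' does not follow from your displayed identities. (In fairness, the same $\pm$ ambiguity between the equation \eqref{eq:Poisson:B} as written and Lemma~\ref{lemma:deriv_of_G_slow} is present in the paper itself; the statement of the proposition is the one consistent with the positive integral representation.)
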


\begin{proof}
\label{proof:closed_form_classic}
The proposition is a direct consequence of the results of \cite{gast_refined_2017}[Theorem 3.1] which we apply to the function $\barF(x) = \sum_y \pi_y(x) F(x,y)$. 
\end{proof}


\subsubsection{Correction terms $T$, $S$, $U$}

\begin{proposition}
\label{prop:closed_form_J}
Assume \ref{assum:continuous} - \ref{assum:exp_stable} and $h\in \calD^2(\calX)$.
The closed form solution of $\bar{J}$ at the equilibrium point $\Phinf$ is given by
\begin{align*}
\bar{J}_h(\Phinf) = \sum_i \frac{\partial h}{\partial x_i}(\Phinf) (T_i + S_i) + \sum_{i,j} \frac{\partial^2 h}{\partial x_i \partial x_j}(\Phinf) U_{i,j}.
\end{align*}
$U$ is the unique solution to the Sylvester equation
\begin{align*}
& A X + X A^T  = - O, \qquad \text{ with } \\
& O = \sum_{y} \pi_{y}(\Phinf) \sum_{\ell, y'} \alpha_{\ell,y'}(\Phinf,y) K^+_{y',:}(\Phinf)F(\Phinf,:) \ell^T,
\end{align*}
where $A, B$ are the Jacobian, Hessian of the average drift as defined in \eqref{eq:refinement_A_B_def} and $K^+$ as given in Lemma~\ref{lemma:g-inverse_poisson-solution}.
$T, S$ are defined by
\begin{align*}
& T_i := \sum_{j} A^{-1}_{i,j} \sum_{k_1,k_2} B_{j,k_1,k_2} U_{k_1,k_2}, \\
& S_i := \sum_k A^{-1}_{i,k} \sum_{y} \pi_y(\Phinf) \sum_{\ell, y'} \alpha_{\ell,y'}(\Phinf,y) \left( \sum_{y'} F_k (\Phinf,y') \nabla^T_x K_{y',y'}^{+}(\Phinf) \ell + K_{y',y'}^{+}(\Phinf) \nabla^T_x F_k(\Phinf,y') \ell  \right).
\end{align*}
\end{proposition}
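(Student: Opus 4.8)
The plan is to obtain explicit expressions for $\bar{J}_h(\Phinf)$ by unfolding its definition $\bar{J}_h(\bx) = \sum_y \pi_y(\bx) J_h(\bx,\by)$ with
$J_h(\bx,\by)= - \sum_{\ell, \by'} \alpha_{\ell,\by'}(\bx,\by) D_x \bigl(D_x\Gslow_h(\bx) \Gfast_F(\bx,\by') \bigr) \ell$,
and then pushing the $x$-derivative through the product by the Leibniz rule. This splits $\bar{J}_h(\Phinf)$ into two pieces: one in which the derivative hits $D_x\Gslow_h$, producing a term involving $D_x^2\Gslow_h(\Phinf)$, and one in which it hits $\Gfast_F$, producing a term involving $D_x\Gslow_h(\Phinf)$ and $\nabla_x K^+(\Phinf)$ (using that by Lemma~\ref{lemma:g-inverse_poisson-solution} one has $\Gfast_F(\bx,\by)=\sum_{\by'}K^+_{\by,\by'}(\bx)F(\bx,\by')$, so $D_x\Gfast_F$ expands by Leibniz into a $\nabla_x K^+$ term and a $\nabla_x F$ term). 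The first piece has the shape $\tfrac12 D_x^2\Gslow_h(\Phinf)\cdot(\text{something})$ and will be handled exactly as in Proposition~\ref{prop:closed_form_G_slow} / \cite{gast_refined_2017}[Theorem 3.1]: the key input is that $D_x^2\Gslow_h$ appears only contracted against a fixed matrix, so one linearizes the slow Poisson equation \eqref{eq:Poisson:B} at $\Phinf$ to get a Lyapunov/Sylvester identity relating $D_x^2\Gslow_h(\Phinf)$ to $A$, $B$, and the contracting matrix. That is where the matrix $O$ and the Sylvester equation $AX+XA^T=-O$ come from, yielding $U$, and then differentiating the relation $h-h(\Phinf)=D_x\Gslow_h\,\bar F$ twice at $\Phinf$ and inverting $A$ (invertible by \ref{assum:exp_stable}) gives $T$ in terms of $B$ and $U$. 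The second piece contributes only a first-order-derivative term in $h$; isolating the coefficient of $\partial h/\partial x_i$ and again using $D_x\Gslow_h(\Phinf) = $ (something)$\cdot A^{-1}$ from the linearized slow Poisson equation produces the expression for $S$ with its two sub-terms $\nabla^T_x K^+$ and $\nabla^T_x F$.

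Concretely, I would proceed in the following order. First, evaluate everything at the equilibrium, using $\phi_\infty$ fixed, $\barF(\Phinf)=0$, and $\pi(\Phinf)$ well defined. Second, expand $D_x(D_x\Gslow_h(\bx)\Gfast_F(\bx,\by'))$ by the product rule into $D_x^2\Gslow_h(\bx)\,\Gfast_F(\bx,\by') + D_x\Gslow_h(\bx)\,D_x\Gfast_F(\bx,\by')$, and substitute $\Gfast_F = K^+ F$ with its own Leibniz expansion for $D_x\Gfast_F$. Third, contract with $\sum_{\ell,\by'}\alpha_{\ell,\by'}(\Phinf,\by)\,\ell$ and average over $\pi_y(\Phinf)$; at this stage one recognizes the combination $\sum_{\ell,\by'}\alpha_{\ell,\by'}(\Phinf,y)K^+_{y',:}(\Phinf)F(\Phinf,:)\ell^T$ as exactly the matrix $O$, and the $D_x^2\Gslow_h$ contraction becomes $D_x^2\Gslow_h(\Phinf)\cdot O$ up to symmetrization. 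Fourth, invoke the derivative identities for $\Gslow_h$ obtained by differentiating \eqref{eq:Poisson:B} once and twice at $\Phinf$: the first derivative gives $D_x\Gslow_h(\Phinf)A = -\,(\text{gradient data of }h)$, hence $D_x\Gslow_h(\Phinf)$ is expressed via $A^{-1}$; the second derivative gives the Lyapunov-type equation that forces $U$ to solve $AX+XA^T=-O$, with $T$ then read off by applying $A^{-1}$ to the $B$-contraction (this mirrors the algebra in \cite{gast_refined_2017}). Fifth, collect the $\partial h/\partial x_i$ coefficients from the $\nabla_x K^+$ and $\nabla_x F$ terms to read off $S$, and the $\partial^2 h/\partial x_i\partial x_j$ coefficient from the $O$-term to read off $U$.

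The main obstacle I expect is the bookkeeping of $D_x\Gfast_F = D_x(K^+F)$ and in particular getting $\nabla_x K^+(\Phinf)$ into a usable form: $K^+$ is defined only implicitly as $(K+\Pi)^{-1}(I-\Pi)$, so its derivative requires differentiating a matrix inverse and the stationary-distribution-dependent projector $\Pi(\bx)=\mathds 1\,\pi^T(\bx)$, which is exactly the ``numerically feasible formula'' issue the introduction flags. Lemma~\ref{lemma:g-inverse_poisson-solution} guarantees $K^+$ is twice differentiable, so $\nabla_x K^+(\Phinf)$ exists; the work is to keep it as an opaque but computable object and verify the $S$ formula is linear in it. A secondary subtlety is ensuring the contraction against $D_x^2\Gslow_h(\Phinf)$ is with the \emph{symmetrized} matrix $O$ (since only the symmetric part of a matrix contracts nontrivially against a symmetric Hessian), which is what makes the Sylvester equation $AX+XA^T=-O$ the right normalization; one must check that replacing $O$ by $\tfrac12(O+O^T)$ changes nothing, or absorb the factor into the statement. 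Everything else—invertibility of $A$, existence and uniqueness of $U$, smoothness of $\Gslow_h$—is handed to us by \ref{assum:exp_stable} and the cited results, so the proof is essentially a careful but routine differentiation-and-collection argument once the expansion is set up.
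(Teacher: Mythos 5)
Your proposal reproduces the paper's proof in its essential structure: the same Leibniz expansion of $J_h$ into a $D_x^2\Gslow_h\cdot\Gfast_F$ piece and a $D_x\Gslow_h\cdot D_x\Gfast_F$ piece, the same substitution $\Gfast_F = K^+F$ with its own product-rule expansion isolating $\nabla_x K^+$ and $\nabla_x F$, the same identification of the matrix $O$ from the contraction with $\sum_{\ell,y'}\alpha_{\ell,y'}\ell$ averaged over $\pi(\Phinf)$, and the same final collection of coefficients of $\partial h/\partial x_i$ and $\partial^2 h/\partial x_i\partial x_j$ into $S$, $T$, $U$. The one substantive point where you deviate is how the derivative identities for $\Gslow_h$ at $\Phinf$ are obtained. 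The paper imports them from \cite{gast_refined_2017}[Lemma 3.6] via the integral representation $\Gslow_h(x)=\int_0^\infty \bigl(h(\phi_s x)-h(\Phinf)\bigr)ds$, which gives $D_x\Gslow_h(\Phinf)$ through $\int_0^\infty e^{As}ds=(-A)^{-1}$ and expresses the relevant contraction of $D_x^2\Gslow_h(\Phinf)$ against $O$ as $\int_0^\infty e^{As}Oe^{A^Ts}ds$, which is recognized \emph{directly} as the unique solution $U$ of $AX+XA^T=-O$. You instead differentiate the slow Poisson equation $D_x\Gslow_h(x)\barF(x)=h(x)-h(\Phinf)$ at the fixed point. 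This works for the first derivative (since $\barF(\Phinf)=0$), but the second differentiation produces a Sylvester equation for the Hessian $M:=D_x^2\Gslow_h(\Phinf)$ itself, of the adjoint form $MA+A^TM=-C$ with $C$ built from $D_x^2h$ and the $B$-contraction of $D_x\Gslow_h$ --- not the stated equation $AX+XA^T=-O$ for $U$. To arrive at the proposition's formula you still need the standard duality step $\mathrm{tr}(MO)=\mathrm{tr}(CU)$ relating the two Sylvester equations; your sketch asserts the conclusion without this conversion, so it should be made explicit. Two minor bookkeeping remarks: the factor $\tfrac12$ you attach to the $D_x^2\Gslow_h$ piece belongs to the $Q$-term of Proposition~\ref{prop:closed_form_G_slow}, not to $J_h$; and your symmetrization concern about $O$ is legitimate but harmless, since only the symmetric part of $U$ survives the contraction against the symmetric Hessian of $h$. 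Everything else --- invertibility of $A$, twice-differentiability of $K^+$, and the form of $S$ --- matches the paper.
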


The proof of this proposition is given in Appendix~\ref{apx:proof_closed_form_J}.

\section{Example: CSMA Model}
\label{sec:example_csma_model}

To illustrate our results, we consider the unsaturated CSMA random-access networks studied in \cite{cecchi_mean-field_2019}. In this paper, the authors use a two-scale model to study the performance of a CSMA algorithm with many nodes. The slow process corresponds to the arrival of jobs and the fast process corresponds to the activation and deactivation of nodes. The authors of this paper derive a mean field approximation and show that it is asymptotically exact. With our methods, we go two steps further:
\begin{itemize}
    \item Theorem~\ref{thrm:stoch_sys_ode} and \ref{thrm:steady-state} show that not only the mean field approximation is asymptotically exact but also that the error is only of order $O(1/N)$. 
    \item By using Theorem~\ref{thrm:refinement}, we can compute a refinement term. Our numerical example shows that, similarly to what happens for classical one-scale mean field models \cite{gast_refined_2017}, this refinement term is extremely accurate. It is much more accurate than the classical mean field approximation when the studied system is not too large. 
\end{itemize}

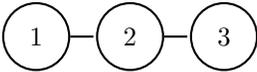
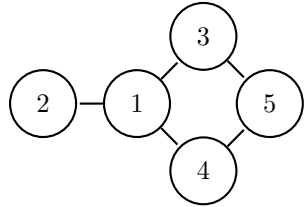
\begin{figure}[ht]
\begin{minipage}{0.45\linewidth}
\begin{subfigure}{\textwidth}
\centering
\begin{tikzpicture}[%
    -, 
    shorten >=1pt,
    node distance=1.25cm,
    thick,
    every state/.style={%
    fill=white,
    draw=black,
    text=black
    }   
    ]

    \node[state] (1) {$1$};

    \node[state] (2) [right of=1] {$2$};
    \node[state] (3) [right of=2] {$3$};
    \path (1)  edge (2)
          (2)  edge (3);
\end{tikzpicture}
\caption{The linear 3 Node Graph.}
\label{fig:linear_graph}
\end{subfigure}
\end{minipage}
\begin{minipage}{0.47\linewidth}
\begin{subfigure}{\textwidth}
\centering
\begin{tikzpicture}[%
    -, 
    shorten >=1pt,
    node distance=1.25cm,
    thick,
    every state/.style={%
    fill=white,
    draw=black,
    text=black
    }
    ]
    \node[state] (3) {$3$};
    \node[state] (1) [below left of=3] {$1$};
    \node[state] (5) [below right of=3] {$5$};
    \node[state] (4) [below right of=1] {$4$};
    \node[state] (2) [left of=1] {$2$};
    \path (1)  edge (2)
          (1)  edge (3)
          (1)  edge (4)
          (3)  edge (5)
          (4)  edge (5);
\end{tikzpicture}
\caption{The 5 Node Graph.}
\label{fig:5_node_graph}
\end{subfigure}
\end{minipage}
\caption{Two examples of interference graphs.}
\label{fig:interference_graphs}
\end{figure}

\subsection{Model Description}

We consider a model with $C$ server types, with $N$ statistically identical servers for each class. All servers communicate through a wireless medium using a random-access protocol and have a finite buffer size $B$. The $C$ classes form an interference graph $G=(\calC, \mathcal{E})$ with $\calC = \{1,\dots,C\}$ the classes and $\mathcal{E}$ the network specific edges. (see for instance Figure~\ref{fig:interference_graphs} for examples of graphs with three or five classes). This interference graph indicates that two servers cannot transmit simultaneously if they either are of the same class or belong to an adjacent class.  For each class $c\in\calC$, we will denote by $y_c\in\{0,1\}$ a variable that equals $1$ if a node of class $c$ is transmitting and $0$ otherwise. We denote by $\calY$ the set of possible activation vectors $y\in\{0,1\}^C$. It is equal to the set of the independent sets of the graph, i.e., all activity vectors for which an active node has only inactive neighbors. For instance, for the graph with three classes which are linearly connected as shown in Figure \ref{fig:linear_graph}, the set of feasible states is given by 
\begin{align*}
    \calY = \{(0,0,0), (1,0,0), (0,1,0), (0,0,1), (1,0,1) \}.    
\end{align*}
Any node can turn active if there are no neighboring active nodes. Once its transmission is finished, it changes back to an idle state. For a given class $c$, we define $\calY_{c}^+ := \{ y \in \calY: y_c = 1\}$ as the subset of states for which node $c$ is active, and by $\calY_{c}^- := \{ y \in \calY : y_{c} = y_{d} = 0 \ \forall d \text{ s.t. } (c,d)\in \mathcal{E} \}$ the subset of states from which node $c$ can turn active, i.e., all neighboring nodes are inactive.  For instance, for the linear $3$ node graph of Figure~\ref{fig:linear_graph} and the class $c=1$ this yields the following sets
\begin{align*}
    \calY_{1}^+ = \{(1,0,0), (1,0,1)\}, && \calY_{1}^- = \{(0,0,0), (0,0,1)\}.
\end{align*}
As in \cite{cecchi_mean-field_2019}, in this non-saturated model, we consider that if a node is in class $c$, new packets arrive to this node at rate $\lambda_c>0$. If a node has a packet to transmit and no neighboring node is transmitting, then this node becomes active at rate $\nu_c>0$. We assume that a transmission from a node of class $c$ takes an exponential time of duration $1/(N\mu_c)>0$, after which the packet leaves the system.

\subsection{Two-scale model representation}
\label{sec:example_rates}
The model as described above fits in our two-timescale representation. To see why, for each class $c\in\calC$ and buffer size $b\in\{0\dots B\}$, we define $X\toN_{t,(c,b)}$ as the fraction of servers of class $c$ that have at least $b$ jobs in their queue at time $t$. We denote by $\bX_t$ the vector of all possible $X_{t,(c,b)}$ for all $c\in\calC$ and $b\in\{0\dots B\}$.  The fast component $\bY_t$ is the activation at time $t$: $Y_{t,(c)}=1$ if a node of class $c$ is transmitting at time $t$ and $0$ otherwise.

Using this representation, we characterize the possible transitions. Given a state pair $(\bx, \by)$, the transitions are represented as a transition vector of the form $(\ell/N, \by')$ and a corresponding transition rate $N \alpha_{\ell,\by'}(\bx,\by)$ such that the state $(\bx,\by)$ jumps to $(\bx +\ell/N, \by')$ at rate $N\alpha_{\ell,\by'}(\bx,\by)$. The transitions can be distinguished into three types:

\begin{itemize}
\item Arrival of a packet to a server of class $c\in \calC$:
\begin{align*}
& \quad {\begin{array}{lcll}  
(\frac{e_{c,i}}{N}, \by) & \text{at rate} & N \lambda_c (1 - x_{c,i}) & \text{for } i = 1,\\
(\frac{e_{c,i}}{N}, \by) & \text{at rate} & N \lambda_c (x_{c,i-1} - x_{c,i}) & \text{for } 2 \leq i \leq B.
\end{array}}
\end{align*}
\item Back-off of a server of class $c \in \calC$ with at least one packet if the class activity vector allows the back-off, i.e., $\by\in \calY_c^-$:
\begin{align*}
& \quad {\begin{array}{lcll}
(-\frac{e_{c,i}}{N}, \by + e_c) & \text{at rate} & N \nu_c (x_{c,i} - x_{c,i+1}) & \text{for } 1 \leq i < B,\\
(-\frac{e_{c,i}}{N}, \by + e_c) & \text{at rate} & N \nu_c x_{c,i} & \text{for } i = B.\\
\end{array}}
\end{align*}
\item Transmission completion of an active node of class $c \in \calC$, i.e., $\by \in \calY^+_c$:
\begin{align*}
& \quad {\begin{array}{lcll} 
(0, \by - e_c) & \text{at rate} & N \mu_c. &  \\
\end{array}} 
\end{align*}
\end{itemize}
We denote the set of all possible transition vectors $(\ell/N,\by')$ from a state pair $(\bx,\by)$ by $\calT(\bx,\by)$.

\subsection{Steady-State distribution $\pi_y(x)$ and average drift}

By using the above transition definitions, the matrix $K$ is given by:
\begin{itemize}
    \item If $\by\in\calY^-_c$, then $K(\by,\by+e_c) = N\nu_c x_{c,1}$,
    \item If $\by\in\calY^+_c$, then $K(\by,\by-e_c) = N\mu_c$,
\end{itemize}
all other entries of the matrix being $0$.

This representation is used by the authors of \cite{cecchi_mean-field_2019} to derive the product-form stationary distribution for a fixed server state $\bx$. This product form is closely related to the product-form stationary distribution of saturated networks as found in \cite{van_de_ven_insensitivity_2010,wang_throughput_2005,boorstyn_throughput_1987}: The quantity $\pi_{\by}(x)$ is calculated as follows:
\begin{align*}
    \pi_{\by}(x) & := \frac{Z(\bx,\by)}{Z(\bx)}, & \text{ with }  && Z(\bx,\by) & = \prod_{c\in\calC} \bigl(\frac{\nu_c}{\mu_c}x_{c,1}\bigr)^{y_c}, & Z(\bx) & = \sum_{\by\in \calY} Z(\bx,\by).
\end{align*}

Following our definition of Section \ref{sec:def_drift} the drift and its average version are generically defined by:
\begin{align*}
F(\bx,\by) & = \sum_{(\ell, \by')\in\calT(\bx,\by)} \alpha_{\ell, \by'}(\bx,\by) \ell, & \text{and} && \barF(\bx) &  = \sum_{w\in\calY} \pi_{\by}(\bx) F(\bx,\by).
\end{align*}
For the drift $F(\bx,\by)$ of the random access model this leads to the closed form expression
\begin{align*}
F(\bx,\by) & = \sum_{c\in\calC}\bigl(\sum_{1< i \leq B} e_{c,i}\ \lambda_c(x_{c,i-1} - x_{c,i}) + e_{c,1}\ \lambda_c(1-x_{c,1}) \\
& \qquad \quad - \mathbf{1}_{\{y\in\calY_c^-\}}(\sum_{1\leq i < B} e_{c,i}\ \nu_c(x_{c,i} - x_{c,i+1}) + e_{c,B}\ \nu_c x_{c,B}) \bigr).
\end{align*}

It should be clear that assumption~\ref{assum:continuous} holds in our case because the rates given in Section~\ref{sec:example_rates} are all continuous in $\bx$ (in fact they are all linear).  Moreover, the model also satisfies Assumption~\ref{assum:irreducible}: For a given $\bx$, the set of irreducible states for $K(\bx)$ contains all the feasible activation vectors $y$ such that $y_c=0$ if $x_{(c,1)}=0$. The condition $x_{{c,1}}=0$ implies that the nodes of class $c$ do not have any packets to transmit. The situation of Assumption~\ref{assum:exp_stable} is more complicated. To the best of our knowledge, there has not been a complete stability characterization for the unsaturated random access CSMA model. Cecchi et al. show in \cite{cecchi_mean-field_2019} that in the case of a complete interference graph stability conditions can be derived which assure global exponential stability. They further conjectured that similar results hold for general interference graphs. In our analysis, we assume that \ref{assum:exp_stable} holds.

\subsection{Numerical results}

To study the accuracy of the mean field approximation and the refined term proposed in Theorem~\ref{thrm:steady-state}, we implemented a Python library\footnote{\url{https://gitlab.inria.fr/sallmeie/bias-and-refinement-of-multiscale-mean-field-models}} to simulate the system and compute the mean field approximation and the refinement term. This library is generic and can take as an input any instance of the model which we defined above. For instance, in the Code Cell~\ref{lst:python_code}, we illustrate how to use this library to construct a model where the interference graph is as in Figure~\ref{fig:5_node_graph}, the rates are $\lambda = [.5,.7,.7,.6,.4], \nu = [4,3,3,3,3],\mu = [3,3,2,4,2]$, and the buffer size is equal to $10$. This cell shows how to initialize the 5 node model and obtain the approximation and refinement from our implementation. We also perform the same experiments with the linear 3 node model, for which we provide the results in Appendix~\ref{apx:3_node_numerical_results}. Note that the results are qualitatively very similar.

\begin{python}[caption={Initialization and Computation of Mean Field and Refinements.},label={lst:python_code}]
# Graph structure (this is the five node example)
G = np.array([[0,1,1,1,0], 
            [1,0,0,0,0],
            [1,0,0,0,1],
            [1,0,0,0,1],
            [0,0,1,1,0]])  
# rates & buffer size
_lambda = np.array([.5,.7,.7,.6,.4])
nu = np.array([4,3,3,3,3])
mu = np.array([3,3,2,4,2])
buffer_size = 10

# We define the model, compute a trajectory and the refinement term. 
csma = symbolic_CSMA(nu, mu, _lambda, G, buffer_size)
T, X = csma.ode(time=200)                       # mean field ODE
v, s, t, w, u = csma.compute_refinements(X[-1]) # steady-state refinement
\end{python}

In order to compute the refinement terms, the library needs to compute various derivatives (of the drift or of the matrix $K^+(\bx)$). To implement this, we rely on symbolic differentiation provided by the \texttt{sympy} library \cite{meurer2017sympy}. As we see later in Table~\ref{table:computation_times}, the use of the symbolic differentiation is the performance bottleneck of our implementation.  In Appendix~\ref{apx:computational_notes} we furthermore show how to obtain the stationary distribution and the derivative of $K^+(x)$ numerically.

\paragraph*{Transient regime and illustration of Theorem~\ref{thrm:stoch_sys_ode}}
To illustrate the accuracy of the mean field, we use the 5 node model described in the Code Cell~\ref{lst:python_code}. We first simulate the CSMA model and compare it with the mean field ODE. The results are reported in Figure~\ref{fig:transient} where we plot the mean field approximation against a sample mean $\esp{\bX_t}$ derived from $1000$ simulations. Initially, all servers are idle. The plot shows the share of servers of class $c=3$ that have at least one job, that is $\esp{X_{t,(3,1)}}$. We compare the results for a model with $N=10$ servers per class (top right), $N=20$ (bottom right), or $N=50$ (left). We observe that in all cases, the evolution of the stochastic system is very well predicted by the mean field approximation. To quantify this more precisely, each plot contains a zoom on the trajectory between the time $t=8$ to $t=13$. These zooms show that for $N=50$, the quantity $\esp{X\toN_{t,(3,1)}}$ is almost indistinguishable from the mean field approximation. For $N=10$ or $N=20$, the estimated average is slightly above the mean field curve, but the confidence intervals remain almost equal to the error.

\begin{figure}[ht]
    \centering
    \includegraphics[
      height=5cm,
      keepaspectratio,
    ]{./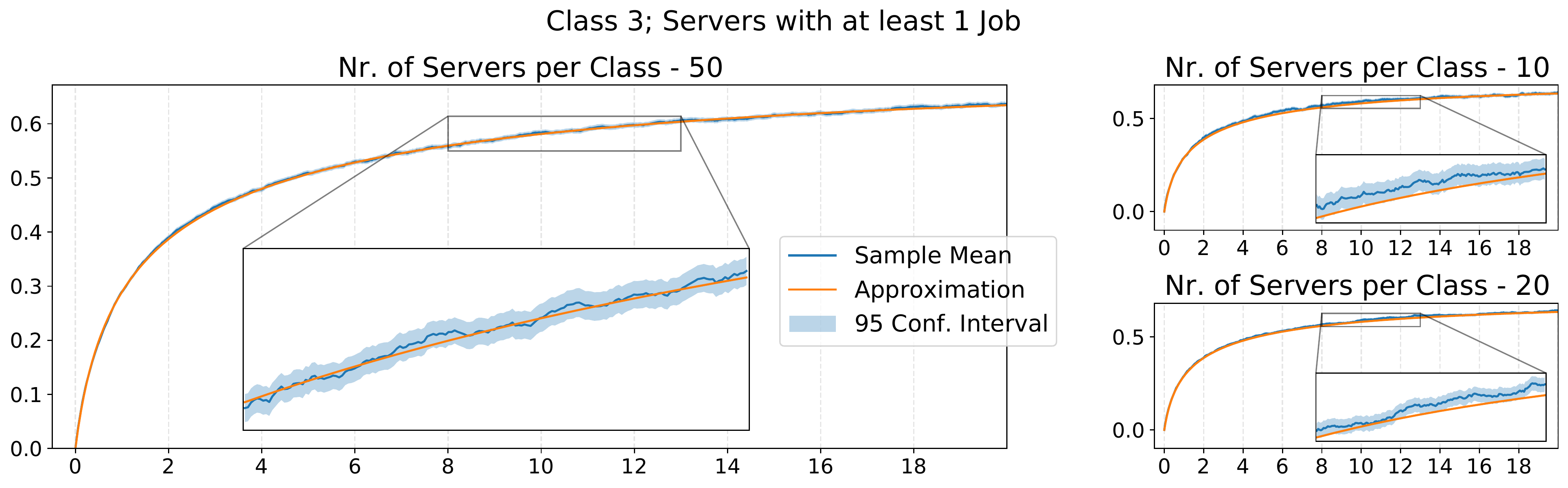}
    \caption{Illustration of the transient behavior of the CSMA model. We compare the `average' mean field and stochastic simulations for three different scaling parameters: $N=10$, $N=20$ and $N=50$.}
    \label{fig:transient}
\end{figure}

\paragraph*{Steady-state and refined accuracy}
While Theorems~\ref{thrm:stoch_sys_ode} and \ref{thrm:steady-state} provide a guarantee on the accuracy of the mean field approximation, Theorem~\ref{thrm:refinement} shows that it is possible to compute an approximation that is more accurate than the original mean field approximation.  We illustrate this in Figure~\ref{fig:avg_queue_len} where we show the steady-state average queue lengths for the same $5$ node graph. The sample mean and confidence interval are computed from $40$ steady-state samples which again are obtained from independent time-averages of $7.5\times 10^6$ events of the Markov chain after a warm-up of $2.5\times10^6$ events. 
For a class $c$ and a buffer size $b$, the quantity $\sum_{b=1}^B\esp{X_{(c,b)}}$ is equal to the steady-state average queue length of each server of class $c$.  In Figure~\ref{fig:avg_queue_len}, we consider different values of $N$, and calculate the average queue length by the following three methods:
\begin{itemize}
    \item By using a stochastic simulator of the original CSMA model.
    \item By using the fixed point of the mean field approximation: $\sum_{b=1}^B(\Phinf)_{(c,b)}$. 
    \item By computing the refinement term, $C$, of Theorem~\ref{thrm:refinement} and using $\sum_{b=1}^B(\Phinf)_{(c,b)}+(C)_{(c,b)}/N$.
\end{itemize}
When looking at the scale of the $y$-axis, we see that in all cases, the accuracy of the mean field approximation is already quite good. More importantly, we also observe that in all cases, the refined approximation seems almost exact: For all considered cases, the refined approximation lies within the 95 percent confidence interval of the simulations and seems to work well even for a small number of servers, $N \approx 10,20$. This result is similar to the one observed for one-timescale mean field models in \cite{gast_refined_2017}.
\begin{figure}[ht]
    \includegraphics[
      width=\textwidth,
      keepaspectratio,
    ]{./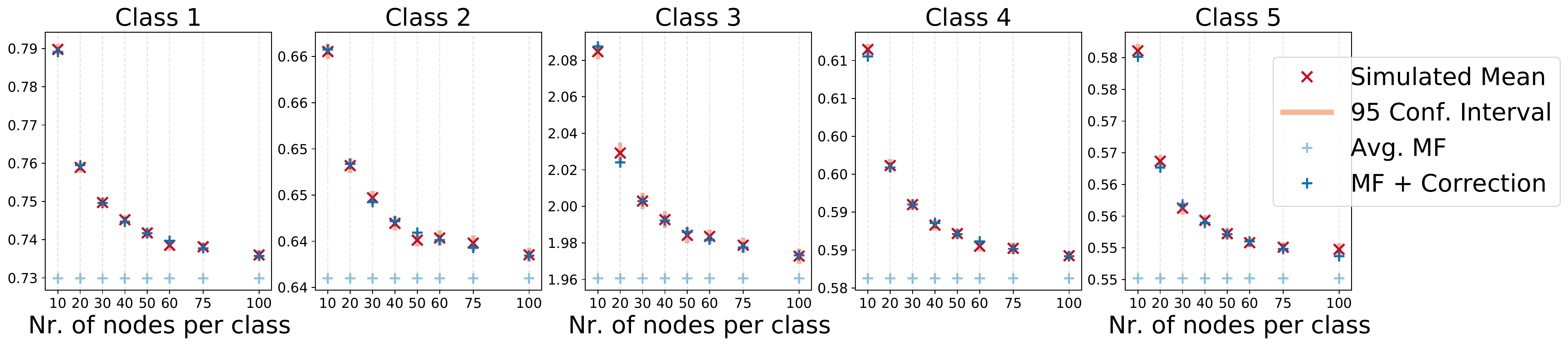}
    \caption{Queue Length Distribution for the 5 Node Graph of Figure~(\ref{fig:5_node_graph}).}
    \label{fig:avg_queue_len}
\end{figure}
\paragraph*{Computation Time}
While the previous figure shows that the refined approximation provides an increase in accuracy for small values of $N$, it comes at the cost of an increase in computation time because one needs to compute the various derivatives of the rate functions and to solve a new linear systems of equations.  In order to quantify the additional computation time, we measure the time taken by our implementation to compute the refinement terms which are reported in Table~\ref{table:computation_times}. We compare the $5$ node model studied before and a $3$-node model whose interference graph is as in Figure~\ref{fig:linear_graph}.  We observe that the time taken to compute the refinement term is significant, in particular for the $5$-node model.  Yet, when looking more carefully at what takes time,  we realize that most of the computation time is taken by the symbolic differentiation. Indeed, to simplify our implementation, we used the automated differentiation method of \texttt{sympy}. While this yields simplifications for the implementation, we encountered that it massively slows down the refinement computation times. Through code profiling it showed that around 95 percent of the computing time is taken by \texttt{sympy} methods such as differentiation and evaluation of symbolic expressions. For smaller interference graphs, e.g., linear 2 / 3 node graphs, this effect is not limiting. For larger graphs, the differentiation turns out to be the restricting factor. In Table~\ref{table:computation_times} we state the computation times for a linear 3 node model and for the setup described before.

We would like to emphasize that the goal of our implementation is to illustrate the theoretical statements, and thus we did not focus on efficiency. The table shows that if one wants to adapt our implementation to work with larger graphs, it would be sufficient to implement a more efficient differentiation method. For instance, this could be done by using closed form expression of the derivatives, or by using automatic differentiation methods, or by using finite difference methods. We believe that such methods would probably be much faster. 

\begin{table}[ht]
  \begin{tabular}{l|c|c|c|c|c|c}
     & Jakobian (A) & Hessian (B) & v + w & s + t + u & Total & Sympy\\
    \hline
    3 Node & 6.08 (6.08) & 30.71 (30.71) & 0.19 (0) & 5.64 (3.81) & 42.63 (40.6) & 95.22\% \\
    5 Node & 97.09 (97.09) & 897.15 (897.15) & 1. (0) & 122.36 (86.09) & 1117.6 (1080.33) & 96.66\% \\
  \end{tabular}\\
  \caption{Refinement computation times for Random Access CSMA model for a 3 node linear graph and the 5 node graph as in Figures \ref{fig:linear_graph}, \ref{fig:5_node_graph}.
  Times are given in seconds. In parentheses, we indicate the time taken by differentiation and subsequent sympy methods. These numbers show that the Sympy code takes more than $95\%$ of the computation time.  }
  \label{table:computation_times}
\end{table}

\section{Conclusion}

In this paper we investigate the accuracy of the classical \emph{averaging method} that is used to study two timescale models.  We study a generic two timescale model and show that under mild regularity conditions, the bias of this `average' mean field approximation is of order $O(1/N)$. This result holds for any finite time-horizon, and extends to the steady-state regime under the classical assumption that the system has a unique and stable fixed point. Our results show the existence of a bias term $C_h(t)$ for any regular function $h$: 
\begin{align*}
    \E[h(\bX_t)] = \underbrace{\underbrace{h(\Phi_t(\bx))}_{\text{classical `average' mean field}} + \underbrace{\frac1N C_h(t)}_{\text{$O(1/N)$ expansion of the bias}}}_{\text{refined `average' mean field}} + \quad o(1/N).
\end{align*}
For the steady-state regime $t=+\infty$, we propose an algorithmic method to calculate this term $C_h$.  This correction term can be computed by solving linear systems and is therefore easily numerically computable. We show on an example that, similarly to what was done for classical one timescale models \cite{gast_refined_2017}, the bias term leads to an approximation that is almost exact for small values of $N$ like $N=10,20$.

An interesting open question would be to obtain a characterization of $C_h(t)$ for the transient regime. Yet, it is not clear to us if those expressions would be usable as their size grows quickly with the system size. From an application point of view, our examples show that the new approximation leads to very accurate estimates for CSMA models. We believe that the same should hold for other multiscale models.


\begin{acks}
  This work is supported by the French National Research Agency (ANR) through REFINO Project under Grant ANR-19-CE23-0015.  
\end{acks}


\bibliographystyle{ACM-Reference-Format}
\bibliography{Refs}


\appendix

\section{Definitions}

In this section we revise some essential definitions and properties used in the paper. As these definitions are well established, we only briefly recall them to provide a self-contained paper.

\subsection{\texorpdfstring{$C_0$}{C0}-Semi-Group, Hölder Norm, ODE differentiability}

\begin{definition}[$C_0$ Semi-Group \cite{pazy_semigroups_1983} Definition 2.1 ] \label{def:semi-group}
$T_s$ is called strongly continuous semi-group (or $C_0$-semi-group) if 
\begin{align}
& T_0 = Id, 
&& T_{s+t} = T_s T_t \text{ for all } s,t \geq 0,
&& \lim_{t\downarrow 0} T_t z = z \text{ for all } z.
\end{align}
\end{definition}
Since we will only work with $C_0$-semi-groups we will simply refer to them as semi-groups. \\
\noindent\textbf{Semi-group \& Generator commutation} \label{appdx:semi_group_commute} The generator of a $C_0$-semi-group is defined by
\begin{align*}
A z = \lim_{t \to 0}\frac1t(T_t z - T_0 z).
\end{align*}
A direct consequence of the definition of the generator and a standard property is that it commutes with its defining $C_0$-semi-group, i.e.,
\begin{align*}
AT_sh(z) = T_sAh(z) = \frac{d}{ds} T_sh(z).
\end{align*}
Note, this therefore holds true for the semi-groups given by the stochastic system $\Psi_s  h (x,y) = \E[h(X_s,Y_s)\mid X_0,Y_0 = x,y]$ with generator $Lh(x,y)$ as in Equation~\eqref{eq:inf_generator_eq} and the semi-group of the ODE $\Phi_s h (x) = h(\phi_s(x))$ with generator $\Lambda h (x) = D_x h(x) \barF(x)$.


\begin{definition}[Hölder Norm and Space]
\label{def:holder_norm}
For $U\subset \R^n$ and $u\in C^k(U)$ 
\begin{align*}
\norm{u}_{k,\gamma} := \sum_k \sup_{x\in U} \norm{ D^k u(x)} + \sum_k \sup \left\{ \frac{\norm{D^k u(x) - D^ku(y)}}{\norm{x-y}^\gamma} \mid x,y \in U, x\neq y \right\}
\end{align*}
is called Hölder norm. The space of functions for which the norm is finite is called Hölder space and denoted by $\calD^k_\gamma(U)$. For the case $\gamma = 1$ the Hölder space encloses all functions who are $k$-times continuously differentiable with bounded derivatives and who's $k$-th derivatives are Lipschitz continuous. The latter we simply denote by $\calD^k(U)$.  
\end{definition}
\noindent An important implication is that all Hölder continuous functions are uniformly continuous. 


\begin{lemma}[Drift induced differentiability \cite{perko_differential_2001} Theorem 1 p.80]
\label{lem:ode_smooth}
    Let $E$ be an open subset of $\R^n$ containing $x_0$ and assume that $f\in C^k(E)$. Then there exists an $a>0$ and $\delta>0$ such that for all $y \in N_\delta(x_0)$\footnote{$N_\delta(x_0):= \{x\in\R^n : \|x - x_0\| \leq \delta\}$} the initial value problem
    \begin{align*}
       \dot{x} = F(x), \qquad x(0) = y,
    \end{align*}
    has a unique solution $u$ which is $k$-times continuously differentiable with respect to the initial condition for $t\in[-a,a]$. 
\end{lemma}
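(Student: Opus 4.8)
The plan is to recast the initial value problem as a fixed-point equation and apply the implicit function theorem in a Banach space, which transfers smoothness in the initial condition $y$ from the defining map to the fixed point. Pick $r>0$ so that $N_r(x_0)\subset E$; since $F\in C^k(E)$ with $k\ge 1$, $F$ is bounded by some $M$ and Lipschitz with some constant $\Lambda_0$ on the compact set $N_r(x_0)$. Choose $\delta\le r/2$ and $a>0$ with $aM\le r/2$ and $a\Lambda_0\le 1/2$. On the Banach space $\mathcal{X}:=C^0([-a,a],\R^n)$ consider the closed ball $\mathcal{B}:=\{u\in\mathcal{X}:\norm{u-x_0}_\infty\le r\}$ and the map
\begin{align*}
    \mathcal{A}(y,u)(t) := y + \int_0^t F(u(s))\,ds, \qquad y\in N_\delta(x_0),\ u\in\mathcal{B}.
\end{align*}
The choice of constants makes $\mathcal{A}(y,\cdot)$ map $\mathcal{B}$ into itself and be a $\tfrac12$-contraction on $\mathcal{B}$, uniformly in $y\in N_\delta(x_0)$. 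By the Banach fixed-point theorem there is a unique $u^*(y)\in\mathcal{B}$ with $\mathcal{A}(y,u^*(y))=u^*(y)$, and differentiating the integral equation shows that $t\mapsto u^*(y)(t)=u(t,y)$ is precisely the unique $C^1$ solution of $\dot x=F(x)$, $x(0)=y$, on $[-a,a]$.

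The crux is to show that $\mathcal{A}$ is of class $C^k$ jointly in $(y,u)$. The term $y$ is affine and $u\mapsto\int_0^{\cdot}u(s)\,ds$ is bounded linear, so it suffices to prove that the Nemytskii (composition) operator $N_F:\mathcal{B}\to\mathcal{X}$, $N_F(u):=F\circ u$, is $C^k$. For $k=1$ the candidate derivative at $u$ is the multiplication operator $h\mapsto(DF\circ u)\cdot h$: the Taylor remainder $F(u(t)+h(t))-F(u(t))-DF(u(t))h(t)$ is bounded in sup norm by $\omega(\norm{h}_\infty)\,\norm{h}_\infty$, where $\omega$ is the modulus of continuity of $DF$ on $N_r(x_0)$, hence $o(\norm{h}_\infty)$; and $u\mapsto DF\circ u$ is continuous into the operator space because $DF$ is uniformly continuous on the compact set $N_r(x_0)$. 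Iterating this argument, using that $D^jF$ is uniformly continuous on $N_r(x_0)$ for $j\le k$, yields $N_F\in C^k$, and therefore $\mathcal{A}\in C^k$.

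To conclude, apply the implicit function theorem to $G(y,u):=u-\mathcal{A}(y,u)$, which is $C^k$ on $N_\delta(x_0)\times\mathcal{B}$. Its partial derivative $D_uG(y,u)=\mathrm{Id}-D_u\mathcal{A}(y,u)$ is invertible since $\norm{D_u\mathcal{A}(y,u)}\le\tfrac12<1$ (Neumann series). Hence $y\mapsto u^*(y)$ is $C^k$ from $N_\delta(x_0)$ into $\mathcal{X}$, and composing with the bounded linear evaluation maps $u\mapsto u(t)$ (with norm uniformly bounded over $t\in[-a,a]$) shows that $y\mapsto u(t,y)$ is $k$-times continuously differentiable, as claimed. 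The main obstacle in this route is the $C^k$-smoothness of the Nemytskii operator: the formulas for its derivatives are the naive ones, but making the Taylor remainders uniform and the derivative maps continuous relies crucially on the compactness of $N_r(x_0)$ and the uniform continuity there of the derivatives of $F$.

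An alternative, more hands-on route avoids the abstract machinery: introduce the variational equation $\dot W=DF(u(t,y))\,W$, $W(0)=\mathrm{Id}$ (a linear matrix ODE, globally solvable on $[-a,a]$), show via Gronwall's inequality that the difference quotients $s^{-1}\bigl(u(t,y+s e_i)-u(t,y)\bigr)$ converge uniformly in $t$ to the $i$-th column of $W$ (the $C^1$ case), and then induct on $k$ by applying the $C^{k-1}$ statement to the augmented system for the pair $(u,W)$; there the main obstacle is treating $y$ as a parameter rather than an initial condition, which is handled by appending the trivial equations $\dot y=0$.
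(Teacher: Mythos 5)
Your argument is correct. Be aware, though, that the paper does not actually prove this lemma: its ``proof'' is a one-line citation to Theorem~1, pp.~80--83, and Remark~1, p.~83 of Perko, whose argument is essentially your second, ``hands-on'' route --- Picard iteration for existence and uniqueness, the variational equation $\dot W = DF(u(t,y))\,W$, $W(0)=\mathrm{Id}$, with a Gronwall estimate showing the difference quotients converge to the columns of $W$ for the $C^1$ case, and induction on $k$ via the augmented system. Your primary route (uniform contraction of $\mathcal{A}(y,\cdot)$ on a closed ball of $C^0([-a,a],\R^n)$, the $C^k$ omega-lemma for the Nemytskii operator $u\mapsto F\circ u$, and the implicit function theorem applied to $G(y,u)=u-\mathcal{A}(y,u)$) is a genuinely different and more abstract proof of the same statement. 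What each buys: the implicit-function-theorem route delivers $C^k$ dependence in one stroke once smoothness of the composition operator is established --- the only real work, as you correctly identify --- and it generalizes painlessly to parameter dependence and to Banach-space-valued ODEs; the variational-equation route is more elementary and produces an explicit linear ODE for $D_y u(t,y)$, which is precisely the object the paper exploits downstream (e.g.\ $D_x\phi_s(\Phinf)=e^{As}$ in the proof of Proposition~\ref{prop:closed_form_J}). Two routine points to tidy in your sketch: the implicit function theorem should be invoked on open neighbourhoods (the footnote's $N_\delta(x_0)$ and your $\mathcal{B}$ are closed balls), and the locally defined implicit function it yields must be identified with the globally defined fixed-point map $y\mapsto u^*(y)$ through uniqueness of the fixed point; neither causes any difficulty.
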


\begin{proof}
    Theorem 1 p.80-83 and Remark 1 p.83 of \cite{perko_differential_2001}.
\end{proof}

\section{Technical Lemmas and Proofs}

\subsection{Proof of Lemma~\ref{lemma:g-inverse_poisson-solution}}
\label{apx:proof_poisson}

By assumption~\ref{assum:irreducible}, the transition matrix $K(x)$ has a unique irreducible class. As pointed out in Section~\ref{sec:assumptions}, the corresponding Markov chain has a unique stationary distribution that we denote by $\pi(x)$. Let $\Pi(x) := \mathds{1}\pi^T(x)$ be the matrix whose lines are all equal to $\pi(x)$.  By~\cite{hunter_generalized_1982}(Theorem 3.5, p.17), $\bigl(K(x) + \Pi(x)\bigr)$ is non-singular and its inverse $(K(x)+\Pi(x))^{-1}$ is a generalized inverse to $K(x)$, which means that it satisfies $K(x)(K(x)+\Pi(x))^{-1} K(x)= K(x)$.

To obtain the solution to the Poisson equation \eqref{eq:PoissonY} we only consider the case where $h$ takes values in $\R$. The extension to a function $h$ that takes values in $\R^n$ is straightforward as it corresponds to $n$ independent Poisson equations.

Let us suppress the dependence on $x$ for clarity. Recall that $K^+=(K+\Pi)^{-1}(I-\Pi)$ and let us study the product $KK^+$: 
\begin{align*}
    K K^+ &= K(K+\Pi)^{-1} (I-\Pi) & \text{\it (by definition)}\\
    &= K(K+\Pi)^{-1} (I+K-K-\Pi)\\
    &= K(K+\Pi)^{-1} + K(K+\Pi)^{-1}K - K(K+\Pi)^{-1}(K+\Pi)& \text{\it (expanding the product)}\\
    &= K(K+\Pi)^{-1} & \text{\it (the last two terms equal $\pm K$)}\\
    &= (K+\Pi)(K+\Pi)^{-1} - \Pi(K+\Pi)^{-1} & \text{\it (Adding and subtracting $\Pi(K+\Pi)^{-1}$)}\\
    &= I - \Pi,
\end{align*}
where the last equality holds because $\pi^T\mathds{1}=1$ and therefore $\Pi \ \Pi=\mathds{1}\pi^T\mathds{1}\pi^T=\mathds{1}\pi^T=\Pi$. Combined with $\Pi \ K=0$, this shows that $\Pi = \Pi(K+\Pi)$ and therefore $\Pi(K+\Pi)^{-1}=\Pi$.

The above computations show that if $\Gfast_h(x,y) = \sum_{y'}K^+_{y,y'}(x) h(x,y')$, then:
\begin{align*}
    K(x)\Gfast_h(x,y) &= h(x,y) - \sum_{y'} \pi_{y'}(x) h(x,y'),
\end{align*}
which shows that $\Gfast_h$ is the solution of the Poisson equation.

The differentiability of $\Gfast_h$ follows from the differentiability of $h$ and $K^+$: Under Assumption~\ref{assum:continuous} $K(x)$ is continuously differentiable. By Assumption~\ref{assum:irreducible}, $K(x)$ has a unique irreducible class, this implies $\Pi(x)$ is continuously differentiable, for which we refer to \cite{ipsen_uniform_1994}, and therefore further implies that $(I+\Pi(x))^{-1}$ and $K^+(x)$ are continuously differentiable. This proves that $\Gfast_h$ is continuously differentiable in $x$.

\subsection{Technical Lemmas used to prove Theorem~\ref{thrm:stoch_sys_ode} (Transient Regime)}
\label{apx:lemma_transient}

\begin{lemma}\label{lemma:h_s}
    For arbitrary but fixed $t>0$, let $g:(s,x,y)\in[0,t]\times\calX\times\calY \mapsto g_s(x,y) \in \R$ be a continuous function that is continuously differentiable in $s$ and let $H_s(x,y):=\esp{g_{s}(X_s,Y_s)\mid X_0,Y_0=x,y}$. Then:
    \begin{align*}
        H_t(x,y) - H_0(x,y) = \int_0^t \esp{\frac{d}{d\tau} g_{s+\tau}(X_s,Y_s)\mid_{\tau=0} } ds + \int_0^t \esp{ L g_s(X_s,Y_s)}ds.
    \end{align*}
\end{lemma}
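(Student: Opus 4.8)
The plan is to reduce the statement to the two facts we already have available: that the stochastic semi-group $\psi_s$ of the Markov chain $(\bX_s,\bY_s)$ has infinitesimal generator $L$, and that $\psi_s$ commutes with $L$ (Appendix~\ref{appdx:semi_group_commute}). The natural object to differentiate is $s\mapsto \psi_s g_s(x,y) = H_s(x,y)$, which depends on $s$ in two ways: through the semi-group and through the explicit time argument of $g$. So first I would write, for fixed $s$ and small $\tau$,
\begin{align*}
H_{s+\tau}(x,y) - H_s(x,y) = \bigl(\psi_{s+\tau} g_{s+\tau} - \psi_s g_{s+\tau}\bigr)(x,y) + \bigl(\psi_s g_{s+\tau} - \psi_s g_s\bigr)(x,y),
\end{align*}
divide by $\tau$, and let $\tau\downarrow 0$. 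The first bracket, divided by $\tau$, converges to $\psi_s L g_s(x,y)=\esp{Lg_s(\bX_s,\bY_s)\mid\bX_0,\bY_0=x,y}$ by the semi-group/generator property (using that $g_s\in\calD(\calX\times\calY)$ so $Lg_s$ is well defined and, by the regularity of the rates and compactness of $\calX$, bounded uniformly in $s$). The second bracket, divided by $\tau$, converges to $\psi_s\bigl(\frac{d}{d\tau}g_{s+\tau}\mid_{\tau=0}\bigr)(x,y)=\esp{\frac{d}{d\tau}g_{s+\tau}(\bX_s,\bY_s)\mid_{\tau=0}}$ by the assumed $C^1$-dependence of $g$ on its time argument together with dominated convergence (the difference quotients are bounded since $s\mapsto g_s$ is $C^1$ on the compact interval $[0,t]$).

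This identifies the right derivative of $s\mapsto H_s(x,y)$ as
\begin{align*}
\frac{d}{ds}H_s(x,y) = \esp{\tfrac{d}{d\tau}g_{s+\tau}(\bX_s,\bY_s)\mid_{\tau=0}} + \esp{Lg_s(\bX_s,\bY_s)},
\end{align*}
where both terms are continuous in $s$ (continuity of the first follows from continuity of $(s,x,y)\mapsto\frac{d}{d\tau}g_{s+\tau}(x,y)$ and of the flow's law; continuity of the second from the explicit form of $L$ and the regularity of $\alpha$ and $g$). Since the right derivative of a continuous function is continuous, $s\mapsto H_s(x,y)$ is $C^1$, and the fundamental theorem of calculus gives the claimed integral identity by integrating from $0$ to $t$. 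A symmetric argument at $s=t$ (or a one-sided version of the FTC) handles the endpoint.

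The main obstacle is the justification of the two limit interchanges, i.e., making the informal "divide by $\tau$ and pass to the limit" rigorous: one must produce a $\tau$-uniform integrable dominating bound for each difference quotient. For the semi-group term this is the standard argument that $\|Lg_s\|_\infty$ is finite (bounded transition rates on the compact $\calX$, finite $\calY$, finitely many transitions, and $g_s$ bounded) and continuous in $s$ on $[0,t]$, hence bounded uniformly in $s$; for the explicit-time term it is the mean value theorem applied to $\tau\mapsto g_{s+\tau}(x,y)$ together with the uniform bound on $\partial_\tau g$ over the compact parameter set. Once these bounds are in place, dominated convergence and Fubini (to swap $\E$ with the $ds$-integral) finish the proof without further subtlety.
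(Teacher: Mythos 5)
Your proposal is correct and follows essentially the same route as the paper's own proof: identify the right derivative of $s\mapsto H_s(x,y)$ as the sum of the explicit-time derivative of $g$ and the generator term $\esp{Lg_s(X_s,Y_s)}$, then integrate; the paper states this more tersely ("using semi-group properties and bounds") but the decomposition is identical. The only step worth making fully explicit is that your first bracket acts on $g_{s+\tau}$ rather than $g_s$, so you should record that the cross term $\bigl(\psi_{s+\tau}-\psi_s\bigr)\bigl(g_{s+\tau}-g_s\bigr)/\tau$ vanishes as $\tau\downarrow 0$, which holds since $\|g_{s+\tau}-g_s\|_\infty=O(\tau)$ and $\|(\psi_{s+\tau}-\psi_s)h\|_\infty\leq \tau\,\|Lh\|_\infty$ for $h$ in the domain of $L$, making the cross term $O(\tau)$.
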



\begin{proof}
    By definition of the generator $L$, the quantity $H_t(x,y)=\esp{g_{t}(X_t,Y_t)\mid X_0,Y_0=x,y}$ is right sided differentiable, i.e., $\frac{d^+}{ds} f(s)= \lim_{ds\to 0^+} \frac1{ds} \bigl(f(s+ds) - f(s)\bigr)$, with respect to time. Using semi-group properties and bounds, its derivative is 
    \begin{align*}
        \frac{d^+}{ds}H_s(x,y) &= \esp{\frac{d}{d \tau} g_{s+\tau}(X_s,Y_s)\mid_{\tau=0} } + \esp{ L g_s(X_s,Y_s)}.
    \end{align*}
    The first term corresponds to the derivation of $g_{s}$ with respect to time and the second term to the changes of the stochastic system in $(X_s,Y_s)$. The lemma therefore follows by using that $H_t(x,y) - H_0(x,y) = \int_0^t \frac{d^+}{ds} H_s(X_s,Y_s)$.
\end{proof}


\begin{lemma}[Bound for $\Lf g(x,y)$] \label{lemma:lfast_bound}
  Take the two-timescale stochastic system as introduced in Section \ref{sec:def_stoch_sys} with generator $L$ and assume \ref{assum:continuous} and \ref{assum:irreducible}. Let $\Lf$ be as defined in Section \ref{sec:fast_poisson_regularity}. Then, for $g\in \calD^1(\calX \times \calY)$ and $(x,y) \in \calX \times \calY$
  \begin{align*}
  \Lf g(x,y) - \frac1NLg(x,y)  = \frac1N\sum_{\ell, y'}\alpha_{\ell,y'}(x,y) \ell \ D_x g(x, y') + o(1/N).
  \end{align*}
\end{lemma}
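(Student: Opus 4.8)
The plan is to expand the full generator $L$ to first order in the slow variable and recognise that the leading‑order term coincides with $\Lf$. Starting from the definition~\eqref{eq:inf_generator_eq} and dividing by $N$,
\begin{align*}
    \tfrac1N L g(\bx,\by)=\sum_{\ell,\by'\in\calT}\alpha_{\ell,\by'}(\bx,\by)\bigl(g(\bx+\ell/N,\by')-g(\bx,\by)\bigr),
\end{align*}
I would split each increment into a ``fast part'' (a pure $\by\to\by'$ jump at the current $\bx$) and a ``slow part'' (the displacement of $\bx$):
\begin{align*}
    g(\bx+\ell/N,\by')-g(\bx,\by)=\bigl(g(\bx,\by')-g(\bx,\by)\bigr)+\bigl(g(\bx+\ell/N,\by')-g(\bx,\by')\bigr),
\end{align*}
and handle the two resulting sums separately.

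For the first sum I would carry out the inner summation over the jump vectors $\ell$, using the definition~\eqref{eq:K}, $K_{\by,\by'}(\bx)=\sum_\ell\alpha_{\ell,\by'}(\bx,\by)$, together with the diagonal convention $K_{\by,\by}(\bx)=-\sum_{\by'\neq\by}K_{\by,\by'}(\bx)$. Since the terms with $\by'=\by$ vanish (the bracket $g(\bx,\by)-g(\bx,\by)$ is zero), this sum equals $\sum_{\by'}K_{\by,\by'}(\bx)\bigl(g(\bx,\by')-g(\bx,\by)\bigr)=\Lf g(\bx,\by)$ exactly. For the second sum I would use that $g\in\calD^1(\calX\times\calY)$, so $D_x g$ is bounded on the compact set $\calX$ and Lipschitz; Taylor's theorem with integral remainder along the segment $[\bx,\bx+\ell/N]$ then gives
\begin{align*}
    g(\bx+\ell/N,\by')-g(\bx,\by')=\tfrac1N D_x g(\bx,\by')\,\ell+R_{\ell,\by'}(\bx),\qquad\abs{R_{\ell,\by'}(\bx)}\le\tfrac{1}{2N^2}\,\mathrm{Lip}(D_x g)\,\abs{\ell}^2 .
\end{align*}
Multiplying by $\alpha_{\ell,\by'}(\bx,\by)$ and summing produces $\tfrac1N\sum_{\ell,\by'}\alpha_{\ell,\by'}(\bx,\by)\,\ell\,D_x g(\bx,\by')$ plus the remainder $\sum_{\ell,\by'}\alpha_{\ell,\by'}(\bx,\by)R_{\ell,\by'}(\bx)$, which is $O(1/N^2)$ uniformly in $(\bx,\by)$: by Assumption~\ref{assum:continuous} the set $\calT$ is finite, the jump vectors $\ell$ are bounded, and each rate $\alpha_{\ell,\by'}$ is bounded on the compact set $\calX\times\calY$. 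Collecting the two sums gives $\tfrac1N L g(\bx,\by)=\Lf g(\bx,\by)+\tfrac1N\sum_{\ell,\by'}\alpha_{\ell,\by'}(\bx,\by)\,\ell\,D_x g(\bx,\by')+O(1/N^2)$, and rearranging yields the stated identity with an $o(1/N)$ (indeed $O(1/N^2)$) error term.

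I do not expect a genuine obstacle here: the lemma is a routine Taylor‑expansion and bookkeeping step that simply isolates the first correction to the trivial identity $\tfrac1N L g=\Lf g$, which already holds exactly when $g$ does not depend on $\bx$. The two points that require a little care are (i) the uniformity of the Taylor remainder, which needs the segments $[\bx,\bx+\ell/N]$ to lie in a fixed compact convex set on which $D_x g$ is Lipschitz — ensured for $N$ large since $\calX$ is convex and compact and $g\in\calD^1(\calX\times\calY)$ — and (ii) the correct handling of transitions with $\by'=\by$ (those changing only the slow variable), so that the $O(1)$ part of $\tfrac1N L g$ collapses precisely to $\Lf g$ rather than to $\Lf g$ plus spurious diagonal contributions.
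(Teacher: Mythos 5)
Your proposal is correct and takes essentially the same route as the paper's own proof: divide $L$ by $N$, split each increment into the pure $\by\to\by'$ jump at fixed $\bx$ (which sums to $\Lf g$ via $K_{\by,\by'}(\bx)=\sum_\ell\alpha_{\ell,\by'}(\bx,\by)$) plus a Taylor expansion in the slow variable; you are merely more explicit about the remainder bound and the $\by'=\by$ terms. One remark: both your derivation and the paper's arrive at $\tfrac1N Lg-\Lf g=\tfrac1N\sum_{\ell,\by'}\alpha_{\ell,\by'}\,\ell\,D_xg(\bx,\by')+o(1/N)$, which is the \emph{negative} of the left-hand side as written in the lemma statement, so the sign there appears to be a typo rather than a flaw in your argument.
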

    

\begin{proof}
By definition of $\Lf$, for a continuous function $g$ the values of $\Lf g(x,y)$ and $\frac{1}{N}Lg(x,y)$ coincide in the limit. For finite $N$, we first look at $\frac1N Lg(x,y)$ which is given by
\begin{align*}
\frac1N Lg(x,y) = \frac1N \sum_{\ell, y'} N \alpha_{\ell,y'}(x,y)( g(x + \frac{\ell}{N}, y') - g(x,y)).
\end{align*}
Using the continuity of $g$ in $x$, and definition of $\Lf g(x,y)$ we have 
\begin{align*}
\Lf g(x,y) = \lim_{N\to \infty}\frac1N Lg(x,y) &= \sum_{\ell, y'} \alpha_{\ell,y'}(x,y)( \lim_{N\to \infty} g(x + \frac{\ell}{N}, y') - g(x,y)) \\ 
& = \sum_{\ell, y'} \alpha_{\ell,y'}(x,y)(g(x, y') - g(x,y)).
\end{align*}
Using Taylor's theorem
\begin{align*}
Lg(x,y) &= N \sum_{\ell, y'} \alpha_{\ell,y'}(x,y)( g(x + \frac{\ell}{N}, y') - g(x,y)) \\
& = N \sum_{\ell, y'} \alpha_{\ell,y'}(x,y)( g(x, y') - g(x,y) + \ell \ D_x g(x, y')  + o(\norm{\ell})) \\
& = N \Lf g(x,y) + \sum_{\ell, y'}\alpha_{\ell,y'}(x,y) \ell \ D_x g(x, y')  + o(1).
\end{align*}

\end{proof}
    

\subsection{Technical Lemma used to prove Theorem~\ref{thrm:refinement} (steady-state refinement)}
\label{apx:proof_steady-state}


The next Lemma justifies that the first term of \eqref{eq:proof_ref_sep_error_terms} is approximated in the limit by the `average' version of $J_h$ as defined in Section~\ref{proof:refinement}. Second, Lemma~\ref{lemma:deriv_of_G_slow} gives stability conditions for the solution of the \ref{eq:Poisson:B}.
\begin{lemma}
\label{lemma:refinement_2}
\raggedright
 Assume \ref{assum:continuous}-\ref{assum:exp_stable} in particular assume $h\in\calD^1(\calX)$. Further assume that $\Gfast_F$, solution to the \ref{eq:Poisson:A}, is cont. differentiable in $x$ and $\Gslow_h$, solution to the \ref{eq:Poisson:B}, is twice cont. differentiable in $x$. Let $\bar{J}(\Phinf) = \sum_y \pi(\Phinf) J(\Phinf,y)$ be the `average' version of $J_h(x,y)=\sum_{\ell, y'} \alpha_{\ell,y'}(x,y) \  D_x \bigl(D_x\Gslow_h(x) \Gfast_F(x,y') \bigr) \ell$ in $\Phinf$, then 
\begin{align*}
& N \E [ D_x \Gslow_h(\Xinf)\bigl(\bar{F}(\Xinf) - F(\Xinf, \Yinf)\bigr) ] = \bar{J}(\Phinf) + o(1).
\end{align*}
\end{lemma}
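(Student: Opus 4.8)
The plan is to convert the left‑hand side into $\E[J_h(\Xinf,\Yinf)]$ by means of \eqref{eq:Poisson:A} applied to $F$ together with Lemma~\ref{lemma:lfast_bound}, and then to show that $\E[J_h(\Xinf,\Yinf)]$ concentrates on $\bar{J}_h(\Phinf)$ as $N\to\infty$. For the first part, \eqref{eq:Poisson:A} with $F$ in place of $h$ gives $\Lf\Gfast_F(\bx,\by)=F(\bx,\by)-\barF(\bx)$, hence $\barF(\bx)-F(\bx,\by)=-\Lf\Gfast_F(\bx,\by)$; since the row vector $D_x\Gslow_h(\bx)$ does not depend on $\by$ it passes through $\Lf$, so with $g(\bx,\by):=D_x\Gslow_h(\bx)\,\Gfast_F(\bx,\by)$ — a scalar function lying in $\calD^1(\calX\times\calY)$ under the assumed regularity of $\Gslow_h$ and $\Gfast_F$ — we get $D_x\Gslow_h(\bx)(\barF(\bx)-F(\bx,\by))=-\Lf g(\bx,\by)$. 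Because $(\Xinf,\Yinf)$ is stationary, $\E[Lg(\Xinf,\Yinf)]=0$, so one may add $\tfrac1N\E[Lg(\Xinf,\Yinf)]$ for free; applying Lemma~\ref{lemma:lfast_bound} to $g$ (whose Taylor remainder is $O(1/N^2)$ uniformly on the compact $\calX$) and recognising the definition of $J_h$ (see Section~\ref{proof:refinement}),
\begin{align*}
N\,\E\bigl[D_x\Gslow_h(\Xinf)(\barF(\Xinf)-F(\Xinf,\Yinf))\bigr]
&=N\,\E\Bigl[-\Lf g(\Xinf,\Yinf)+\tfrac1N Lg(\Xinf,\Yinf)\Bigr]\\
&=\E[J_h(\Xinf,\Yinf)]+o(1).
\end{align*}

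It then remains to show $\E[J_h(\Xinf,\Yinf)]=\bar{J}_h(\Phinf)+o(1)$. Note first that $J_h$ is continuous and bounded on $\calX\times\calY$ (it involves only $\alpha$, $D_x^2\Gslow_h$, $D_x\Gslow_h$, $\Gfast_F$ and $D_x\Gfast_F$, all bounded), and since $\pi$ is Lipschitz the same holds for $\bar{J}_h$. Decompose
\begin{align*}
\E[J_h(\Xinf,\Yinf)]-\bar{J}_h(\Phinf)=\E\bigl[J_h(\Xinf,\Yinf)-\bar{J}_h(\Xinf)\bigr]+\E\bigl[\bar{J}_h(\Xinf)-\bar{J}_h(\Phinf)\bigr].
\end{align*}
For the second term, Theorem~\ref{thrm:steady-state} applied to the smooth bounded test function $\bx\mapsto\norm{\bx-\Phinf}^2$ gives $\E\norm{\Xinf-\Phinf}^2=O(1/N)$, hence $\Xinf\to\Phinf$ in probability, and as $\bar{J}_h$ is bounded and continuous this term is $o(1)$ by bounded convergence. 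For the first term, let $\Gfast_{J_h}(\bx,\by):=\sum_{\by'}K^+_{\by,\by'}(\bx)J_h(\bx,\by')$ be the solution of \eqref{eq:Poisson:A} with $J_h$ in place of $h$ (Lemma~\ref{lemma:g-inverse_poisson-solution}); it is continuous and bounded and satisfies $\Lf\Gfast_{J_h}=J_h-\bar{J}_h$, so, using $\E[L\Gfast_{J_h}(\Xinf,\Yinf)]=0$,
\begin{align*}
\E\bigl[J_h(\Xinf,\Yinf)-\bar{J}_h(\Xinf)\bigr]=\E\Bigl[\Lf\Gfast_{J_h}(\Xinf,\Yinf)-\tfrac1N L\Gfast_{J_h}(\Xinf,\Yinf)\Bigr].
\end{align*}
The integrand is a finite sum, with bounded coefficients, of increments of $\Gfast_{J_h}$ of the form $\Gfast_{J_h}(\bx,\by')-\Gfast_{J_h}(\bx+\ell/N,\by')$, hence bounded by $C\,\omega(\sup_\ell\abs{\ell}/N)$, where $\omega$ is the modulus of continuity of $\Gfast_{J_h}$ (uniform by compactness of $\calX$); this vanishes uniformly in $(\bx,\by)$, so the term is $o(1)$. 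Combining the three displays gives the claim.

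\emph{Main obstacle.} The delicate point is the first term of the last decomposition: $J_h$ carries a second derivative of $\Gslow_h$ and is therefore only continuous, not differentiable, so neither Theorem~\ref{thrm:steady-state} nor Lemma~\ref{lemma:lfast_bound} — both of which require $\calD^1$/$\calD^2$ regularity — applies to it directly, and one must settle for the soft dominated‑convergence argument above instead of a quantitative $1/N$ bound; this is harmless since the lemma only claims an $o(1)$ error. A secondary, purely bookkeeping, issue is to check that $\Lf$ commutes with the $\by$‑independent factor $D_x\Gslow_h(\bx)$ and that the Taylor remainders used in the first step are $O(1/N^2)$ uniformly over the compact state space, so that multiplying through by $N$ still leaves an $o(1)$ error.
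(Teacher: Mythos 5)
Your proof is correct, and its first half is essentially the paper's argument verbatim: rewrite $\barF-F=-\Lf \Gfast_F$ via the fast Poisson equation, pull the $\by$-independent factor $D_x\Gslow_h$ through $\Lf$, add the stationary-expectation zero $\tfrac1N\E[Lg(\Xinf,\Yinf)]$, and apply Lemma~\ref{lemma:lfast_bound} to land on $\E[J_h(\Xinf,\Yinf)]+o(1)$. Where you genuinely diverge is the final step $\E[J_h(\Xinf,\Yinf)]=\bar J_h(\Phinf)+o(1)$. The paper disposes of this by invoking Theorem~\ref{thrm:steady-state} with $J_h$ as the test function, which strictly speaking requires $J_h\in\calD^2(\calX\times\calY)$ — a regularity that $J_h$ does not obviously possess under the lemma's hypotheses, since it already contains $D_x^2\Gslow_h$ and $D_x\Gfast_F$. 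You correctly flag this obstacle and replace the quantitative step by a soft argument: split off $\E[\bar J_h(\Xinf)-\bar J_h(\Phinf)]$, control it by applying Theorem~\ref{thrm:steady-state} to the smooth function $\bx\mapsto\norm{\bx-\Phinf}^2$ (giving $\Xinf\to\Phinf$ in probability) plus bounded convergence, and handle $\E[J_h(\Xinf,\Yinf)-\bar J_h(\Xinf)]$ with a second fast Poisson equation and a modulus-of-continuity bound that needs only continuity of $\Gfast_{J_h}$. This buys validity under weaker regularity at the price of losing the rate: the paper's route would give $O(1/N)$, yours only $o(1)$, but $o(1)$ is all the lemma asserts. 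Two cosmetic points: the sign of $J_h$ in the lemma statement differs from its definition in Section~\ref{proof:refinement} (you consistently use the signed version that makes the identity true, which is fine); and your claim that the Taylor remainder in the first step is $O(1/N^2)$ requires a Lipschitz derivative of $g$, whereas uniform continuity of $D_xg$ on the compact $\calX$ already yields the $o(1/N)$ that you actually need.
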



\begin{proof}
To prove the lemma, let $\Gfast_F$ be the solution to the Poisson equation as given in \eqref{eq:Poisson:A}. We use $\Gfast_F$ to rewrite
\begin{align*}
N & \E[ D_x \Gslow_h(\Xinf) ( \bar{F}(\Xinf) - F(\Xinf, \Yinf) )] = - N \E[ D_x \Gslow_h(\Xinf) (F(\Xinf, \Yinf) - \bar{F}(\Xinf))]\\
& = - N \E[ D_x \Gslow_h(\Xinf) ( \Lf \Gfast_F(\Xinf, \Yinf))] 
 = - N \E[ \Lf D_x \Gslow_h(\Xinf) (  \Gfast_F(\Xinf, \Yinf))]. 
\end{align*}
Adding $\E[L D_x \Gslow(\Xinf) (  \Gfast_h(\Xinf, \Yinf))] = 0$ and applying the steps as in the proof of Lemma \ref{lemma:lfast_bound} we see that 
\begin{align}
& - N \E[ \Lf D_x \Gslow_h(\Xinf) (  \Gfast_F(\Xinf, \Yinf)) - L D_x \Gslow_h(\Xinf) (  \Gfast_F(\Xinf, \Yinf))] \nonumber \\
& = - \E [ \sum_{\ell,y'} \alpha_{\ell, y'}(\Xinf, \Yinf)  \bigl( D_x (D_x \Gslow_h(\Xinf)\Gfast_F(\Xinf, y')  ) \bigr) \ell \ ] + o(1). \label{eq:J-term-ref}
\end{align}
The last equality follows directly from the definition of $\Lf$ since $G_h$ only depends on $x$.
Using Theorem \ref{thrm:steady-state} we see that $\E[J_h(\Xinf, \Yinf)]$ with $J_h(x,y):=-\sum_{\ell, y'} \alpha_{\ell,y'}(x,y)\ell \  D_x \bigl(D_x\Gslow_h(x) \Gfast(x,y') \bigr)$ of equation \eqref{eq:J-term-ref} is approximated by $\bar{J}(\Phinf) = \sum_y \pi(\Phinf) J(\Phinf,y)$. This concludes the proof as it implies

$N \E [ D_x \Gslow_h(\Xinf)\bigl(\bar{F}(\Xinf) - F(\Xinf, \Yinf)\bigr) ] 
= \E [ J_h(\Xinf, \Yinf)] + o(1) = \bar{J}_h(\Phinf) + o(1).$
\end{proof}

\subsection{Stability of \texorpdfstring{$\Gslow_h$}{the `slow' Poisson equation}}

\begin{lemma}[Stability] \label{lemma:deriv_of_G_slow}
Assume that $\barF$ and $\phi$ are k-times differentiable with uniformly continuous derivatives and that $\phi$ has a unique exponentially stable attractor $\Phinf$. Then the k-th derivative of $\Gslow_h: x \mapsto \int_0^\infty h(\phi_s x) - h(\Phinf)ds$ is bounded and equal to $\int_0^\infty D^k_x (h \circ\phi_s )(x) ds$.
\end{lemma}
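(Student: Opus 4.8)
The plan is to differentiate $\Gslow_h(x)=\int_0^\infty\bigl(h(\phi_s(x))-h(\Phinf)\bigr)\,ds$ under the improper integral sign. The only real work is to show that $D^k_x(h\circ\phi_s)(x)$ decays exponentially fast in $s$, uniformly in $x\in\calX$, which makes the integral of the $k$-th derivative converge uniformly in $x$ and simultaneously gives the boundedness claim.

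First I would record well-posedness and the shape of the integrand. By hypothesis $x\mapsto\phi_s(x)$ is $k$-times continuously differentiable, and with $h\in\calD^k(\calX)$ the composition $h\circ\phi_s$ is $k$-times continuously differentiable for each fixed $s$. Exponential stability gives $\norm{\phi_s(x)-\Phinf}\le a e^{-bs}$, and the Lipschitz continuity of $h$ (implied by $h\in\calD^k$, $k\ge1$) yields $\abs{h(\phi_s(x))-h(\Phinf)}\le L_h\, a e^{-bs}$, so $\Gslow_h$ is well defined and bounded.

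The core is the uniform exponential decay of the spatial derivatives of the flow. Let $A:=D_x\barF(\Phinf)$; as is standard, exponential stability of $\Phinf$ forces $A$ to be Hurwitz, say $\norm{e^{As}}\le M e^{-\sigma s}$. The first variation $\Psi_s(x):=D_x\phi_s(x)$ solves $\dot\Psi_s=D_x\barF(\phi_s(x))\Psi_s$ with $\Psi_0=I$. Since $\calX$ is compact and $\norm{\phi_s(x)-\Phinf}\le a e^{-bs}$, there is an $s_0$, uniform in $x$, beyond which $D_x\barF(\phi_s(x))$ stays within a prescribed $\varepsilon$ of $A$ (uniform continuity of $D_x\barF$); a Gronwall bound on $[0,s_0]$ and, on $[s_0,\infty)$, the variation-of-constants formula against $e^{As}$ combined with a Gronwall estimate (with $\varepsilon<\sigma/M$) give $\norm{\Psi_s(x)}\le a_1 e^{-b_1 s}$ for $a_1,b_1>0$ independent of $x$. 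For $2\le j\le k$, differentiating the flow equation $j$ times shows $D^j_x\phi_s(x)$ solves a linear ODE with the same homogeneous part $D_x\barF(\phi_s(x))$ and an inhomogeneous forcing that is a polynomial in $\{D^i_x\phi_s(x):1\le i<j\}$ with coefficients the derivatives of $\barF$ of order $\le j$ along the trajectory (bounded, since $\barF$ has bounded derivatives up to order $k$). By induction on $j$ the forcing is exponentially decaying, so variation of constants against the exponentially decaying fundamental solution of the homogeneous part gives $\norm{D^j_x\phi_s(x)}\le a_j e^{-b_j s}$ uniformly in $x$. A Faà di Bruno expansion then writes $D^k_x(h\circ\phi_s)(x)$ as a finite sum of terms $D^j_x h(\phi_s(x))\cdot D^{i_1}_x\phi_s(x)\cdots D^{i_j}_x\phi_s(x)$ with $i_1+\dots+i_j=k$ and each $i_m\ge1$: in each term all factors are bounded and at least one decays exponentially, so $\norm{D^k_x(h\circ\phi_s)(x)}\le C e^{-\beta s}$ with $C,\beta>0$ uniform in $x$.

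Finally I would justify the interchange of $D^k_x$ and $\int_0^\infty$. On each $[0,T]$ one may differentiate under the integral (the integrand and its $x$-derivatives are continuous and uniformly bounded on $[0,T]\times\calX$), so $D^k_x\int_0^T(h(\phi_s(x))-h(\Phinf))\,ds=\int_0^T D^k_x(h\circ\phi_s)(x)\,ds$; the bound $\norm{D^k_x(h\circ\phi_s)(x)}\le Ce^{-\beta s}$ makes the right-hand side converge uniformly in $x$ as $T\to\infty$, and the standard theorem on differentiating a uniformly convergent family then yields $D^k_x\Gslow_h(x)=\int_0^\infty D^k_x(h\circ\phi_s)(x)\,ds$ and $\norm{D^k_x\Gslow_h(x)}\le C/\beta$. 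The main obstacle is the inductive proof of the uniform exponential decay of $D^j_x\phi_s$ — especially transferring exponential stability of the nonlinear flow to exponential decay of the linearized flow uniformly in the base point; the chain-rule bookkeeping and the differentiation-under-the-integral step are routine.
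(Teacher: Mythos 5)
Your proof is correct, and it is considerably more informative than the paper's own ``proof'', which consists entirely of the citation \cite{gast_refined_2017}[Lemma 3.5]; what you have written is essentially a self-contained reconstruction of the argument behind that cited lemma (exponential decay of the variational equations via linearization around $\Phinf$, variation of constants plus Gronwall, induction on the derivative order, Fa\`a di Bruno, then differentiation under the integral with uniform convergence). The one step worth flagging is your assertion that exponential stability ``forces'' $A=D_x\barF(\Phinf)$ to be Hurwitz: with the paper's form of assumption \ref{assum:exp_stable}, namely $\norm{\phi_t(\bx)-\Phinf}\le a e^{-bt}$ uniformly over $\bx\in\calX$ without a factor $\norm{\bx-\Phinf}$, this does not follow by simply differentiating the stability estimate at $\Phinf$, and a careful treatment either strengthens the stability hypothesis to the pointwise form $\norm{\phi_t(\bx)-\Phinf}\le a e^{-bt}\norm{\bx-\Phinf}$ or invokes a center-manifold--type argument to rule out non-negative real parts; this is exactly the same delicate point on which the cited lemma rests, so it is a shared (and in this literature conventionally accepted) assumption rather than a defect specific to your argument. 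Everything else --- the perturbation bound $\varepsilon<\sigma/M$, the inductive exponential decay of $D^j_x\phi_s$ driven by forcing terms that are products of at least two lower-order derivatives, and the justification of exchanging $D^k_x$ with $\int_0^\infty$ --- is sound.
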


\begin{proof}
    This is a consequence of \cite{gast_refined_2017}[Lemma 3.5].
\end{proof}

\subsection{Proof of Proposition~\ref{prop:closed_form_J}}
\label{apx:proof_closed_form_J}

\begin{proof}
\label{proof:closed_form_new}
In the first part of the proof we find computable expressions of $D_x \Gslow_h, D_x^2 \Gslow_h $ and $D_x \Gfast_F$. These expressions allow us to rewrite $J_h$ and construct the closed form representation of $J_h$ as well as $\bar{J}_h$ in the following steps.
By definition
\begin{align}
J_h(x,y) & = -\sum_{\ell, y'} \alpha_{\ell,y'}(x,y) \  D_x \bigl(D_x\Gslow_h(x) \Gfast_F(x,y') \bigr) \ell \notag \\
& = - \sum_{\ell, y'} \alpha_{\ell,y'}(x,y)\  \bigl( \Gfast_F(x,y')^T D^2_x\Gslow_h(x)  + D_x \Gslow_h(x) D_x \Gfast_F(x,y')\bigr) \ell. \label{eq:J_equation_closed_form_lemma}
\end{align}

By Lemma \ref{lemma:g-inverse_poisson-solution} there exists a matrix $K^+(x)$ such that $\Gfast_F(x,y')$ has the form 
\begin{align*}
\Gfast_F(x,y') = \sum_{y'\in \calY} K^+_{y',y''}(x) F(x,y'').
\end{align*}
Assumption \ref{assum:continuous} which assures differentiability of the transition rates $\alpha$ with respect to $x$, also implies differentiability for $\Gfast_F$. Therefore, $D_x \Gfast_F(x,y') = \sum_{y''\in \calY} F(x,y'') \nabla^T_x K^+_{y',y''}(x) + K^+_{y',y''}(x) D_x F(x,y'')$ with $\nabla_x^T = \left[ \frac{\partial}{\partial x_1},\dots, \frac{\partial}{\partial x_n} \right], D_x f = (\frac{\partial}{\partial x_j} f_i)_{i,j=1\dots n}$. Using the results of \cite{gast_refined_2017}[Lemma 3.6] with $A, B$ the first and second derivative of $\bar{F}$ as defined in \eqref{eq:refinement_A_B_def}, it holds that for the equilibrium point $\Phinf$
\begin{align*}
D_x \Gslow_h(\Phinf)_i & = \sum_j \frac{\partial h}{\partial x_j} (\Phinf) \int_0^\infty \bigl(D_x \phi_s(\Phinf)\bigr)_{j,i}ds = \sum_j \frac{\partial h}{\partial x_j} (\Phinf) (-A)^{-1}_{j,i}
\end{align*}
as well as
\begin{align}
D^2_x \Gslow_h(\Phinf)_{n,m} & = \sum_{i,j} \frac{\partial^2 h}{\partial x_i \partial x_j} (\Phinf) \int_0^\infty \bigl(D_x \phi_s(\Phinf)\bigr)_{j,n} \bigl(D_x \phi_s(\Phinf)\bigr)_{i,m} ds \label{eq:decond_deriv_slow_poisson} \\
& \qquad + \sum_i \frac{\partial h}{\partial x_i} (\Phinf) \int_0^\infty \bigl( D_x^2 \phi_s(\Phinf) \bigr)_{j,m,n}ds \notag \\
& = \sum_{i,j} \frac{\partial^2 h}{\partial x_i \partial x_j} (\Phinf) \int_0^\infty \bigl(e^{As}\bigr)_{j,n} \bigl(e^{As}\bigr)_{i,m} ds \notag \\
& \qquad + \sum_i \frac{\partial h}{\partial x_i} (\Phinf) \bigl( \sum_{j} (-A)^{-1}_{i,j} \sum_{k_1,k_2} B_{j,k_1,k_2} \int_0^\infty (e^{As}\bigr)_{k_1,n} (e^{As}\bigr)_{k_2,m} ds \bigr). \notag
\end{align}
Next, the above equations are used to rewrite \eqref{eq:J_equation_closed_form_lemma}. To obtain a closed form expression for the left summand, we start by looking at the sum
\begin{align}
\sum_{y}\pi_y(\Phinf)\sum_{\ell, y'} \alpha_{\ell,y'}(\Phinf, y)  \sum_{y'}K^+_{y',y'}F_m(\Phinf,y')\ell_n\int_0^\infty (e^{As}\bigr)_{k_1,n} (e^{As}\bigr)_{k_2,m}. \label{eq:lyapunov_equal}
\end{align}
A solution to the above equation is given by the following Lyapunov equation. To ease notations, define
\begin{align*}
O_{m,n} & := \sum_{y}\pi_y(\Phinf) \sum_{\ell, y'} \alpha_{\ell,y'}(\Phinf,y) K^+_{y',:}(\Phinf)F_m(\Phinf,:) \ell_n,
\end{align*}
or, equivalently in matrix notation,
$O \phantom{:}= \sum_{y}\pi_y(\Phinf)\sum_{\ell, y'} \alpha_{\ell,y'}(\Phinf,y) K^+_{y',:}(\Phinf)F(\Phinf,:) \ell^T.$

If a matrix $U$ solves\footnote{As $A$ is non-singular, $A$ and $-A^T$ don't share any eigenvalues and therefore equation \eqref{eq:sylvester_equation_J_bar} has a unique solution.} the Sylvester equation (for $X$)
\begin{align}
A X + X A^T = -O, \label{eq:sylvester_equation_J_bar}
\end{align}
it is equal to \eqref{eq:lyapunov_equal}. Applying this identity and \eqref{eq:decond_deriv_slow_poisson} to the first summand of $\bar{J}(\Phinf)$ which is the `average' version of \eqref{eq:J_equation_closed_form_lemma}, lets us rewrite
\begin{align*}
& \sum_{y} \pi_y(\Phinf) \sum_{\ell, y'} \alpha_{\ell,y'}(\Phinf,y)\  \Gfast_F(\Phinf,y')^T D^2_x\Gslow_h(\Phinf) \ell \\ 
& \qquad = \sum_{i,j} \frac{\partial^2 h}{\partial x_i \partial x_j}(\Phinf) U_{i,j} + \sum_i \frac{\partial h}{\partial x_i} (\Phinf) \sum_{j} (-A)^{-1}_{i,j} \sum_{k_1,k_2} B_{j,k_1,k_2} U_{k_1,k_2}.
\end{align*}
For the second major summand appearing in the definition of $\bar{J}(\Phinf)$, writing out the solutions to the Poisson equations and their derivatives yields 
\begin{align*}
&  \sum_{y} \pi_y(\Phinf) \sum_{\ell, y'} \alpha_{\ell,y'}(\Phinf,y)\  D_x\Gslow_h(\Phinf) D_x \Gfast_F(\Phinf,y') \ell \\
& = \sum_{y} \pi_y(\Phinf) \sum_{\ell, y'} \alpha_{\ell,y'}(\Phinf,y) \times \\
& \qquad \sum_{j,k} \left( \sum_i  \frac{\partial h}{\partial x_i}(\Phinf) (-A)^{-1}_{i,k} \right) \left( \sum_{y'} \frac{\partial}{\partial x_j} K_{y',y'}^+(\Phinf) F_k(\Phinf,y') + K_{y',y'}^+(\Phinf) \frac{\partial}{\partial x_j} F_k(\Phinf,y') \right) \ell_j.
\end{align*}
By using vector notation and rearranging the sums this is equal to
\begin{align}
& \sum_{i} \frac{\partial h}{\partial x_i}(\Phinf) \sum_k (-A)^{-1}_{i,k} \sum_{y} \pi_y(\Phinf) \sum_{\ell, y'} \alpha_{\ell,y'}(\Phinf,y) \times \nonumber \\
& \qquad \left( \sum_{y'} F_k (\Phinf,y') \nabla^T_x K_{y',y'}^{+}(\Phinf) \ell + K_{y',y'}^{+}(\Phinf) \nabla^T_x F_k(\Phinf,y') \ell  \right). \label{eq:def_s}
\end{align}
Lastly, define  $T_i := \sum_{j} A^{-1}_{i,j} \sum_{k_1,k_2} B_{j,k_1,k_2} U_{k_1,k_2}$ and
\begin{align*}
S_i := \sum_k A^{-1}_{i,k} \sum_{y} \pi_y(\Phinf) \sum_{\ell, y'} \alpha_{\ell,y'}(\Phinf,y) \left( \sum_{y'} F_k (\Phinf,y') \nabla^T_x K_{y',y'}^{+}(\Phinf) \ell + K_{y',y'}^{+}(\Phinf) \nabla^T_x F_k(\Phinf,y') \ell  \right).
\end{align*} This concludes the proof as by definition of $T$,$S$ and $U$,   $\bar{J}(\Phinf) = \sum_i \frac{\partial h}{\partial x_i}(\Phinf) (T_i + S_i) + \sum_{i,j} \frac{\partial^2 h}{\partial x_i \partial x_j}(\Phinf) U_{i,j}$.
\end{proof}

\section{Computational Notes}
\label{apx:computational_notes}

The first note describes how to calculate the steady-state probabilities $\pi(x)$ associated to the transition matrix $K(x)$.

\begin{note}[Note on the computation for the stationary probabilities.]
As the finite state continuous time Markov chain with generator $K(x)$ has unique irreducible class, there exists a non-trivial unique stationary distribution $\pi(x)$\footnote{As the Markov chain is allowed to have transient states the stationary distribution can take zero values.}. Denote by $\{1,\dots, m\}$ the states of the Markov chain. $\pi(x)$ is obtained by solving the linear system
    \begin{align*}
        v \ K(x) = \mathbf{0}, \quad \sum_{i=1}^m v_i = 1,
    \end{align*}
    with $v\geq0$ component wise. By definition, $K(x)$ is of rank $m-1$. Using its structure, i.e., $\sum_{y'} K(x)_{y,y'} = 0 \ \forall y$, we can rewrite the above over-determined linear system by replacing the last column of the generator with $\mathds{1} = [1,\dots,1]^T$ yielding
    \begin{align*}
    v \ [ K(x)_{:,1}, \dots, K(x)_{:,m-1}, \mathds{1}] = [\underbrace{0,\dots,0}_{m-1 \text{ times}}, 1],
    \end{align*}
    for which the solution is the stationary distribution $\pi(x)$. By $K(x)_{:,y}, \ y=1,\dots,m-1$ we denote the $y$-th column of $K(x)$.
\end{note}

As shows in Lemma~\ref{lemma:g-inverse_poisson-solution} the solution to the 'fast' Poisson Equation~\ref{eq:PoissonY} has the form $\Gfast_h(x,y) = \sum_{y'}K^{+}_{y,y'}(x)h(x,y')$. To compute the bias correction terms, it is necessary to calculate the first derivative of $\Gfast_h$ with respect to $x$. Since the computation of the derivative of $(K(x) + \Pi(x))^{-1}$ can be non-trivial and time consuming, the note below elaborates how the derivative can be efficiently obtained.
\begin{note}[Computation of \texorpdfstring{$D_x K^+(x)$}{the Poisson Solution derivative}]
By definition $K^+(x) = (K(x) + \Pi(x))^{-1} (I - \Pi(x))$. Using basic matrix derivation rules one has
\begin{align}
 \frac{\partial}{\partial x_i} K^+(x) & =  \frac{\partial}{\partial x_i} \bigl( (K(x) + \Pi(x))^{-1} (I - \Pi(x)) \bigr) \nonumber \\
& =  \frac{\partial}{\partial x_i} \bigl(K(x) + \Pi(x)\bigr)^{-1}  \bigl(I - \Pi(x)\bigr) - \bigl(K(x) + \Pi(x)\bigr)^{-1}  \frac{\partial}{\partial x_i} \Pi(x). \label{eq:partial_deriv_K+}
\end{align}
Since the numerical difficulty lies only the computation of $D_x\bigl(K(x) + \Pi(x)\bigr)^{-1}$ we focus solely on it. Define $E(x) = (K(x) + \Pi(x))$ and let $I$ be the identity matrix. As pointed out in Lemma~\ref{lemma:g-inverse_poisson-solution} $(K(x) + \Pi(x))$ is indeed invertible and thus for the partials derivative $\frac{\partial}{\partial x_i}, i=0\dots d_x$ the following holds:
\begin{align*}
&& \frac{\partial}{\partial x_i} I &= \frac{\partial}{\partial x_i} ( E(x) E^{-1}(x) ) \\ 
\Leftrightarrow && 0 & = (\frac{\partial}{\partial x_i} E(x)) E^{-1}(x) + E(x) \frac{\partial}{\partial x_i} E^{-1}(x) \\
\Leftrightarrow && \frac{\partial}{\partial x_i} E^{-1}(x) & = - E^{-1}(x) (\frac{\partial}{\partial x_i} E(x)) E^{-1}(x) \\
\Leftrightarrow && \frac{\partial}{\partial x_i} (K(x) + \Pi(x))^{-1} & =  - (K(x) + \Pi(x))^{-1} \bigl(\frac{\partial}{\partial x_i} (K(x) + \Pi(x))\bigr) (K(x) + \Pi(x))^{-1}.
\end{align*}
The above can now be used to compute Equation~\eqref{eq:partial_deriv_K+}.
\end{note}


\section{Numerical Results for the 3 Node Model}
\label{apx:3_node_numerical_results}

For completeness, we give the numerical results of the linear 3 node model of Graph~\ref{fig:linear_graph}. To obtain the results we modify the parameters of the code of Code Cell~\ref{lst:python_code} to match with the 3 node setup. The new model is defined as in Code Cell~\ref{lst:python_code_3_node}.

\begin{python}[caption={Initialization of Mean Field for the 3 Node Model.},label={lst:python_code_3_node}]
# Graph structure (this is the 3 node example)
G = np.array([[0,1,0],
            [1,0,1],
            [0,1,0]])
# rates & buffer size
_lambda = np.array([0.4,0.2,0.5])
nu = np.array([1.2,2.,1.5])
mu = np.array([1.4,1.3,1.7])
buffer_size = 10
\end{python}

As seen in Figure~\ref{fig:3_node_avg_queue_len}, we obtain similar results as for the 5 node model when considering steady-state average queue length values. The sample mean and confidence interval are computed from $40$ steady-state samples in the same manner as for the 5 node example. The refined approximation almost exactly indicates the stationary value of the stochastic process even for small $N$ while the mean field approximation gets more accurate as $N$ grows. 

\begin{figure}[ht]
    \includegraphics[
      width=\textwidth,
      keepaspectratio,
    ]{./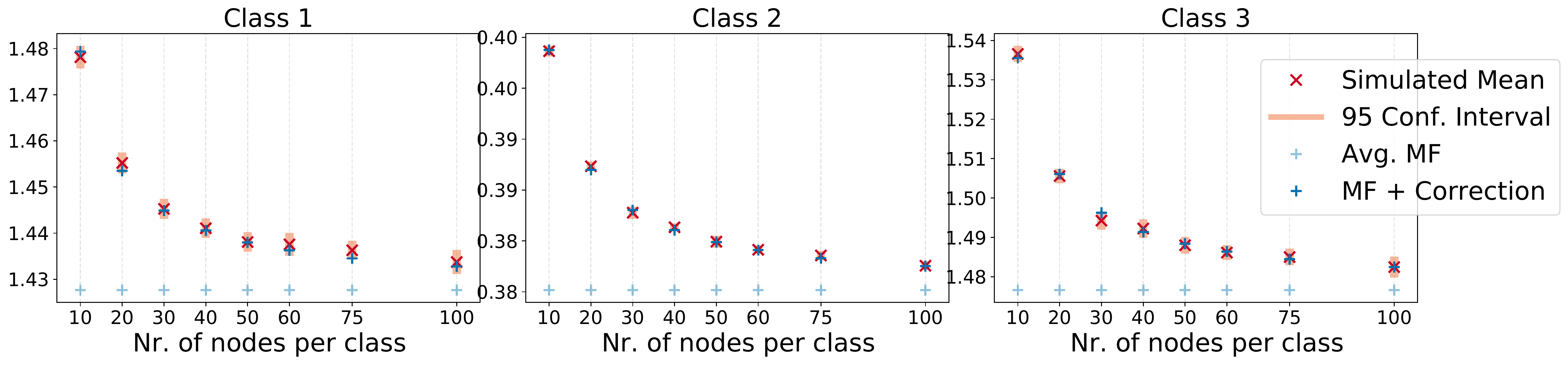}
    \caption{Stationary Queue Length Distribution for the 3 Node Graph of Figure~(\ref{fig:linear_graph}).}
    \label{fig:3_node_avg_queue_len}
\end{figure}
    
\end{document}